\definecolor{LinkColor}{rgb}{0,0,1}
\definecolor{LinkColor2}{rgb}{0,0.5,0}
\definecolor{lg}{rgb}{.5,.5,.5}
\newcommand{\bk}{\color{black}}
\numberwithin{equation}{section}
\newtheorem*{Abs}{Abstract}
\newtheorem{Thm}{Theorem}[section]
\newtheorem{Lem}[Thm]{Lemma}
\newtheorem{Cor}[Thm]{Corollary}
\newtheorem{Def}[Thm]{Definition}
\newtheorem{Rem}[Thm]{Remark}
\renewenvironment{proof}[1][\proofname]{%
	\par\pushQED{\qed}\normalfont%
	\topsep6\p@\@plus6\p@\relax
	\trivlist\item[\hskip\labelsep\bfseries#1\@addpunct{.}]%
	\ignorespaces
}{%
	\popQED\endtrivlist\@endpefalse
}
\renewcommand\paragraph{\@startsection{paragraph}{4}{\z@}%
	{1ex \@plus1ex \@minus.2ex}%
	{-1em}%
	{\normalfont\normalsize\bfseries}}
\renewcommand\subparagraph{\@startsection{paragraph}{4}{\z@}%
	{1ex \@plus1ex \@minus.2ex}%
	{-1em}%
	{\normalfont\normalsize\itshape}}
\newcommand{\bphi}{{\B\varphi}}
\newcommand{\bet}{{\B\eta}}
\newcommand{\btheta}{{\B\vartheta}}
\newcommand{\bx}{{\B x}}
\newcommand{\bw}{{\B w}}
\newcommand{\bu}{{\B u}}
\newcommand{\R}{\mathbb{R}}
\newcommand{\N}{\mathbb{N}}
\newcommand{\E}{\mathcal{E}}
\newcommand{\C}{\mathbb{C}}
\newcommand{\eps}{\varepsilon}
\newcommand{\bnorm}[1]{\big\| #1 \big\|}
\newcommand{\bBEC}[3]{\big\langle\mathcal{E}\big(#1\big),\mathcal{E}\big(#2\big)\big\rangle_{#3}}
\newcommand{\bBE}[2]{\big\langle\mathcal{E}\big(#1\big),\mathcal{E}\big(#2\big)\big\rangle_{\C(\bphi)}}
\newcommand{\brsp}[3]{\big(#1,#2\big)_{\rho(\B{#3})}}
\newcommand{\brsq}[3]{\big(#1,#2\big)_{\rho(#3)}}
\newcommand{\bBET}[3]{\big\langle \mathcal{E}\big(#1\big),\mathcal{E}\big(#2\big)\big\rangle_{\C({#3})}}
\newcommand{\bBK}[3]{\big\langle \mathcal{E}\big(#1\big),\mathcal{E}\left(#2\right)\big\rangle_{\C(\B{\varphi}_{#3})}}
\newcommand{\bDP}[2]{\big\langle#1,#2\big\rangle_{H^{-1},H^1} }
\newcommand{\HI}{H^1(\Omega;\R^d)}
\newcommand{\bLP}[2]{\big(#1,#2\big)_{L^{-1}} }
\newcommand{\bh}{\B{h}}
\numberwithin{equation}{section}  
\newcommand{\abs}[1]{\left\vert#1\right\vert}
\newcommand{\B}[1]{\boldsymbol{#1}}
\newcommand{\BE}[2]{\left\langle\mathcal{E}\left(#1\right),\mathcal{E}\left(#2\right)\right\rangle_{\C(\B{\varphi})}}
\newcommand{\BED}[2]{\left\langle \mathcal{E}(#1),\mathcal{E}(#2)\right\rangle_{\C^{\prime}(\B{\varphi})\B{h}}}
\newcommand{\BEDd}[2]{\left\langle \mathcal{E}(#1),\mathcal{E}(#2)\right\rangle_{\C^{\prime}(\B{\varphi})t\B{h}}}
\newcommand{\BET}[3]{\left\langle \mathcal{E}\left(#1\right),\mathcal{E}\left(#2\right)\right\rangle_{\C({#3})}}
\newcommand{\BEDp}[2]{\left\langle \mathcal{E}(#1),\mathcal{E}(#2)\right\rangle_{\C^{\prime}(\B{\varphi})(\tilde{\B{\varphi}}-\B{\varphi})}}
\newcommand{\DB}[2]{\boldsymbol{#1}^{\boldsymbol{#2}}}
\newcommand{\DP}[2]{\left\langle#1,#2\right\rangle_{H^{-1},H^1} }
\newcommand{\ev}{\lambda^{\B{\varphi}}_1}
\newcommand{\evd}{\left(\lambda_1^{\B{\varphi}}\right)^{\prime}\B{h}}
\newcommand{\HC}{H^1_C(\Omega;\R^d)}
\newcommand{\HD}{H^{-1}(\Omega;\R^d)}
\newcommand{\Hd}{H^1_D(\Omega;\R^d)}
\newcommand{\HL}{H^1(\Omega;\R^N)\cap L^{\infty}(\Omega;\R^N)}
\newcommand{\HN}{H^1(\Omega;\R^N)}
\newcommand{\ld}[2]{\left(\lambda^{\B{#1}}_{#2}\right)^{\prime}}
\newcommand{\LP}[2]{\left(#1,#2\right)_{L^{-1}} }
\newcommand{\LuN}{L^{\infty}(\Omega;\R^N)}
\newcommand{\Lzd}{L^2(\Omega;\R^d)}
\newcommand{\Lzrp}{L^2_{\B{\varphi}}(\Omega;\R^d)}	
\newcommand{\Lzrpa}[1]{L^2_{#1}(\Omega;\R^d)}
\newcommand{\norm}[1]{\left\Vert #1\right\Vert}
\newcommand{\oB}[1]{\overline{\B{#1}}}
\newcommand{\rsp}[3]{\left(#1,#2\right)_{\rho(\B{#3})}}
\newcommand{\rspd}[2]{\left(#1,#2\right)_{\rho^{\prime}(\B{\varphi})\B{h}}}
\newcommand{\rspt}[2]{\left(#1,#2\right)_{\rho^{\prime}(\B{\varphi})t\B{h}}}
\newcommand{\rsq}[3]{\left(#1,#2\right)_{\rho(#3)}}
\begin{document} 
	
\begin{titlepage}
	\begin{addmargin}{0.5 cm}
		\centering
		\LARGE{\scshape Shape and Topology Optimization \\ involving the eigenvalues\\ of an elastic structure:\\ A multi-phase-field approach}\\
		\rmfamily\mdseries
		\vspace{0.04\paperheight} 
		\normalsize
		\textsc{Harald Garcke, Paul H\"uttl and Patrik Knopf}\\[1ex]
		\textit{Fakul\"at f\"ur Mathematik, Universit\"at Regensburg, 93053 Regensburg, Germany}\\[1ex]
		\href{mailto:harald.garcke@ur.de}{harald.garcke@ur.de},
		\href{mailto:paul.huettl@ur.de}{paul.huettl@ur.de},
		\href{mailto:patrik.knopf@ur.de}{patrik.knopf@ur.de}\\
		\vspace{0.02\paperheight} 
		\begin{center}
		\small
		{\color{white}
			\textit{This is a preprint version of the paper. Please cite as:} \\  
			H. Garcke, P. H\"uttl and P. Knopf, [Journal] (2020) \\ 
			\texttt{https://doi.org/...}
		}
		\end{center}
		\vspace{0.02\paperheight}
		\small
		\begin{Abs}
			\normalfont
				A cost functional involving the eigenvalues of an elastic structure, that is described by a multi-phase-field equation, is optimized. This allows us to handle topology changes and multiple materials. We prove continuity and differentiability of the eigenvalues and we establish the existence of a global minimizer to our optimization problem. We further derive first order necessary optimality conditions for local minimizers. Moreover, an optimization problem combining eigenvalue and compliance optimization is also discussed. \\[1ex]
		\end{Abs}
		\flushleft\textbf{Keywords.}  Shape optimization; topology optimization; eigenvalue problem; linear elasticity; multi-phase-field model. \bk\\[1ex]
		\textbf{AMS subject classification.}
		35P05, 
		49Q10, 
		49R05, 
		74B05, 
		74P05, 
		74P15. 
		\bk\\
	\end{addmargin}
\end{titlepage}


\bigskip
\normalsize
\setlength{\parskip}{1ex}
\setlength{\parindent}{0ex}


\section{Introduction} 
The main goal in structural topology optimization is to find the optimal distribution of materials in a so called \emph{design domain}. 
In contrast to shape optimization, topological changes including the design and distribution of holes in the structure are also allowed in topology optimization. The shape and the topology of the structure are initially unknown and are to be determined by minimizing a suitable objective functional.
In many applications (e.g., design engineering), support conditions, volume restrictions, prescribed solid regions or voids, as well as applied loads have to be taken into account.

Several mathematical techniques to deal with shape or topology optimization problems can be found in the literature. The traditional approach is the method of boundary variations to compute shape derivatives. In this way the value of the objective functional can be decreased by deforming the boundary in a certain descent direction (see, e.g., \cite{Delfour,Murat-Simon,Simon,Sokolowski} and the references cited therein). The drawbacks of this technique are its high computational costs and that topological changes are not allowed. In some situations, it is also possible to deal with changes of the topology by homogenization methods (see, e.g., \cite{AllaireBook}) or variants of this approach such as the SIMP method (see, e.g., \cite{Bourdin}). Especially in recent times, the level-set method has been a popular tool to approach topology optimization problems. It was originally developed in \cite{Osher-Sethian} and was afterwards frequently used in the literature (see, e.g., \cite{Burger,Osher-Santosa}). Although this method can handle topological changes, difficulties can arise if voids are to be created.

In this paper, however, we pursue a different ansatz. We describe an elastic structure by a vector valued phase-field variable $\B{\varphi}$ representing the distribution of materials. 
Here, with respect to the space variable the phase-field $\B{\varphi}$ does not change its values abruptly but exhibits continuous phase transitions. 
The phase-field method for topology optimization was first introduced in \cite{Bourdin} and was subsequently used, e.g., in \cite{Blank,Blank2,Blank3,Burger-Stainko,Dede,Penzler,Takezawa,Zhou-Wang,Zhou-Wang2,Zhou-Wang3}.
The main advantage of this approach is that topological changes can be handled directly without having to switch the framework. In particular, the creation of voids does not impose any problems. Moreover, these models are very well suited to be treated with methods from mathematical analysis.

In many technical applications (especially from engineering sciences), it is not only desired to optimize the material distribution of an elastic structure but also to minimize (or maximize) its dynamical
response to a given driving frequency of a specific range.  
A classical example is a machine whose running engine generates vibrations that affect other parts of the device. Mostly, it is essential that the vibrations do not match certain \emph{eigenfrequencies} of other components of the machine to avoid a resonance disaster.

A typical concrete example is discussed in \cite{Bendsoe} where the engine of an airplane is considered. While the engine is running it creates vibrations that affect other components of the aircraft, especially its wings. Therefore, they must be designed in such a way that these vibrations are not amplified. Otherwise this could lead to fatal flight instabilities and might even damage or break the material.
From a mathematical point of view, the \emph{eigenmodes} of the engine and the wings should be taken into account within the optimization process to keep them as different from the modes of the engine as possible. One possibility to guarantee this behavior would be to maximize the smallest eigenmode of the wings, as consequently also all larger eigenmodes will be separated from those of the engine (which are generally rather small). 

Models for eigenvalue problems and their analysis have already been discussed in the literature, see \cite{Allaire2, Barbarosie, Bucur, Buttazzo, Elliot, Henrot, Rousselet}.
In \cite{Barbarosie, Allaire2} the authors investigate models similar to the one we intend to study. In these papers, the density distribution $\rho$ is assumed to depend only on the spatial variable $\bx\in\Omega$ meaning that the dependence on the structure (represented by the phase-field $\bphi$) is neglected. However, as the material distribution of the structure is actually to be optimized, the optimal density distribution is initially unknown.

In this paper, we study the following eigenvalue problem to describe an elastic structure:
\begin{alignat}{3}
    \label{EWP:0}
        \left\{
            \begin{aligned}
                -\nabla \cdot\left[\mathbb{C}(\B{\varphi})\mathcal{E}(\B{w})\right]
            &=
                \lambda^{\B{\varphi}}\rho(\B{\varphi})\B{w}&&\text{in }\Omega,\\
                \B{w}
            &=
                \B{0}&&\text{on }\Gamma_D,\\
                \left[\mathbb{C}(\B{\varphi})\mathcal{E}(\B{w})\right]\B{n}
            &=
                \B{0} &&\text{on }\Gamma_0,
            \end{aligned}
    \right.
\end{alignat}
with the disjoint splitting $\partial\Omega=\Gamma_D\cup \Gamma_0$, where we additionally demand $\Gamma_D$ to have strictly positive Hausdorff measure. Here, $\mathbb{C}$ denotes the elasticity tensor, $\mathcal E(\bw)$ stands for the symmetrized gradient of $\bw$, $\lambda^\bphi$ is the eigenvalue (depending on $\bphi$) and $\B w= \B w^\bphi$ denotes a corresponding eigenfunction. In contrast, to the similar models studied in \cite{Barbarosie, Allaire2}, the density distribution $\rho(\bphi)$ is now allowed to depend on the phase-field $\bphi$. For more details about the notation and the motivation of this model, we refer the reader to Section~2.

We prove the existence of eigenvalues and eigenfunctions for the problem \eqref{EWP:0} and we establish essential properties needed for the theory of calculus of variations such as suitable continuity statements. This allows us to investigate an optimal control problem where an objective functional 
\begin{align}\label{objFunc}
        J_{l}^{\varepsilon}(\B{\varphi})
    =
            \Psi(\lambda_{i_1}^{\B{\varphi}},\dots, \lambda_{i_l}^{\B{\varphi}})
        +
            \gamma E^{\varepsilon}(\B{\varphi}),
\end{align}
is to be minimized under the constraint that $\B{\varphi}$ and $\lambda_{i_j}^\bphi$ satisfy the state equation \eqref{EWP:0}. Here, the function $\Psi:(\mathbb R_{>0})^l\to\mathbb R$ is continuously differentiable and bounded from below, and penalizes the eigenvalues. The expression $E^{\varepsilon}(\B{\varphi})$ stands for the 
\emph{Ginzburg--Landau energy}
\begin{align}
        E^{\varepsilon}(\B{\varphi})
    =
        \int_{\Omega}
            \left(
                \frac{\varepsilon}{2}\abs{\nabla\bphi}^2+\frac{1}{\varepsilon}\psi(\B{\varphi})
            \right), 
\end{align} 
where $\varepsilon>0$ corresponds to the thickness of the diffuse interface and $\psi$ stands for the bulk potential that usually has a double-well structure. A typical choice is $\psi(s) = (s^2-1)^2$, $s\in\mathbb R$.
As the energy $E^{\varepsilon}$ is an approximation of the perimeter of the material boundaries, minimizing \eqref{objFunc} can be related to a shape and topology optimization problem with a perimeter penalization (see, e.g., \cite{Ambrosio}). The phase-field $\bphi$ represents the control and is supposed to satisfy suitable restrictions. For reasons of mathematical analysis we use a diffuse interface approach, i.e. the components of $\B{\varphi}$ do not change their values abruptly but continuously at interfacial regions between the materials. As in \cite{Blank}, the sharp interface limit could be considered to describe a discrete material distribution and would allow it to become a formulation with a perimeter regularization.
The optimization problem will be introduced in more detail in Section~2.

To derive first-order necessary conditions for locally optimal controls we need to show that the considered eigenvalues $\lambda^\bphi$ are differentiable with respect to $\bphi$. 
Based on the theory developed in \cite{Rousselet}, we can show that the smallest eigenvalue is semi-differentiable with respect to the phase-field. In addition, we prove Fr\'echet differentiability of simple eigenvalues by means of the implicit function theorem after introducing a proper sign convention for the eigenfunctions. A positive side benefit of the implicit function theorem is that we also obtain the Fr\'echet derivatives of the corresponding eigenfunctions.

With the approach described in this paper, it is also possible to approach classical shape and topology optimization problems also for other elliptic operators, i.e., the Laplacian. The idea is to send $\mathbb{C}$ and $\rho$ to $0$ in the void phase. In a forthcoming paper we plan to study this limit in more detail.
To describe the idea of our approach in such a setting we consider a phase-field approximation for a spectral optimization problem for the Neumann--Laplace operator.
Here, a domain $D\subset \mathbb{R}^n$ is to be optimized such that the eigenvalues of the Neumann problem for the Laplace operator
\begin{alignat*}{2}
    -\Delta u&=\lambda u&&\quad \text{in }D,\\
    \nabla u\cdot \B{n}&=\B{0}&&\quad \text{on }\partial D,
\end{alignat*}
are maximal. Choosing a scalar phase-field variable $\varphi$ and a function $a$ such that $a(-1)=\delta$, where $\delta$ denotes a small parameter, $a(1)=1$ such that $a$ is a smooth, positive interpolation in between. We then solve
\begin{alignat*}{2}
        -\nabla\cdot\left(a(\varphi)\nabla u\right)
    &=
        \lambda a(\varphi) u&&\quad \text{in }D,\\
        \nabla u\cdot \B{n}
    &=
        \B{0}&&\quad \text{on }\partial D,
\end{alignat*}
to obtain eigenvalues $0=\lambda_{1}^{\varphi}<\lambda_2^{\varphi}\le \lambda_3^{\varphi}\le\dots$. We then optimize
\begin{align*}
    \Psi\left(\lambda_{i_1}^{\varphi},\dots,\lambda_{i_l}^{\varphi}\right)
    +\gamma\int_{\Omega}
        \left(
            \frac{\varepsilon}{2}\abs{\nabla\varphi}^2
            +\frac{1}{\varepsilon}\psi(\varphi)
         \right),  
\end{align*}
with a given function $\Psi$. We conjecture that for $\varepsilon,\delta\to 0$ classical spectral optimization problems for the Laplace operator are recovered. As
\begin{math}
      \int_{\Omega}
         \left(
            \frac{\varepsilon}{2}\abs{\nabla\varphi}^2
            +\frac{1}{\varepsilon}\psi(\varphi)
          \right) 
\end{math}
converges in the sense of a $\Gamma$-limit to the perimeter functional the limits contain a perimeter regularization. In a similar way \eqref{objFunc} under the constraint \eqref{EWP:0} can be related to a sharp interface problem for eigenvalues of the elasticity operator.

Our paper is structured as follows. First, we precisely formulate the mathematical model for the problem with a special emphasis on the eigenvalue problem and its analytic difficulties. After the first continuity results for eigenvalues and eigenfunctions with respect to the phase-field $\B{\varphi}$, we are able to show existence of a minimizer of the objective functional. Here we do not need to assume anything about simplicity of eigenvalues.\\
The most elaborate part is then dedicated to deriving differentiability results and to improve the aforementioned continuity statements, which will yield the desired variational inequality. In this context, it is crucial to assume simplicity of the considered eigenvalues.
In the last part we want to combine the eigenvalue problem with compliance minimization problems. 


\section{Formulation of the problem} 
This section is devoted to the introduction of the mathematical model and the structural optimization problem.

\subsection{The design domain and the phase-field variable}
We fix a bounded Lipschitz design domain $\Omega\subset \R^{d}$ with $d\in \N$ whose boundary is split into two disjoint parts: A homogeneous Dirichlet boundary $\Gamma_D$ with strictly positive $(d-1)$-dimensional Hausdorff measure and a homogeneous Neumann boundary $\Gamma_0$. The distribution of $N\in \N$ materials is represented by the vector valued phase-field $\bphi: \Omega \to \R^N$. 
This means, for any $i\in\{1,...,N\}$, the component $\bphi_i$ can be interpreted as the concentration of the $i$-th material.
In this regard, $\bphi_i = 0$ describes the absence of the $i$-th material, whereas $\bphi_i = 1$ means that only the $i$-th material is present.
We use the convention that voids are also interpreted as a sort of material, whose distribution is given by the $N$-th component of the vector $\bphi$.  

For reasons of mathematical analysis we use a diffuse interface approach, i.e. the components of $\bphi$ do not change their values abruptly but continuously at interfacial regions between the materials.

Furthermore, we want to prescribe the total spatial amount of each phase. To this end, we impose the mean value constraint $\fint_{\Omega}\bphi=\B{m}=(m^{i})_{i=1}^{N}$, 
where $m^{i}\in (0,1)$ is a fixed given number for any $i\in \left\{1,\dots , N\right\}$. 
In addition, we want the vector $\B{m}$ to be an element of the set
\begin{align*}
    \Sigma^N
    =\left\{
            \B{v}\in \R^N\left| \,\sum_{i=1}^{N}v^{i}
            =1
        \right.\right\}. 
\end{align*}
This constraint is a plausible consequence of the physical fact that at each point in space the volume fractions of the materials should sum up to $1$. Furthermore, being a volume fraction, each component clearly has to be non-negative. For the upcoming analysis, we additionally want to prescribe a suitable regularity for the phase-field, namely $\HN$.
All these constraints are expressed in the set
\begin{align*}
    \B{\mathcal{G}}^{\B{m}}=\left\{\B{v}\in \B{\mathcal{G}}\left|\,\fint_{\Omega}\B{v}=\B{m}\right.\right\}.
\end{align*}
Here, $\B{\mathcal{G}}$ is given by
\begin{align*}
        \B{\mathcal{G}}
    =
        \left\{
            \left.\B{v}\in \HN \right|\, \B{v}(\bx)\in \B{G}\;\text{for almost all}\; \bx\in\Omega
        \right\},
\end{align*}
where $\B{G}=\R^N_{+}\cap \Sigma^N$ with
\begin{align*}
        \R^N_{+}
    =
        \left\{
            \left.\B{v}\in \R^N\right| v^{i}\ge 0 
            \quad\forall i\in \left\{1,\dots, N\right\}
        \right\}.
\end{align*}
The set $\B{G}$ is referred to as the \emph{Gibbs-Simplex}.

\subsection{The Ginzburg--Landau energy}

For the objective functional and especially the well-posedness of the minimization problem the following so called \emph{Ginzburg--Landau energy}
\begin{align*}
        E^{\eps}(\bphi)
    =
        \int_{\Omega}
            \left(
                \frac{\eps}{2}\abs{\nabla\bphi}^2+\frac{1}{\eps}\psi(\bphi)
            \right), 
        \quad \eps>0,
\end{align*}
is crucial. In our model the function $\psi: \R^N\to \R\cup \left\{\infty \right\}$ should attain exactly $N$ global minima of value zero attained at the unit vectors $\B{e}_i\in \R^N$, i.e. for all $i\in\{1,...,N\}$,
\begin{align*}
    \min\psi = \psi(\B{e}_i) = 0.
\end{align*}
Furthermore, $\psi$ is assumed to exhibit the decomposition $\psi(\bphi)=\psi_0(\bphi)+I_{\B{G}}(\bphi)$ 
with $\psi_0\in C^{1,1}(\R^N,\R)$ 
and $I_{\B{G}}$ being the indicator functional
\begin{align*}
        I_{\B{G}}(\bphi)
     =
        \begin{cases}
            0&\text{if } \bphi\in\B{G},\\
            \infty& \text{otherwise}.
        \end{cases}
\end{align*}
This type of obstacle functional is used to enforce that $\bphi$ attains its values only in $\B{\mathcal{G}}$ as this set is not penalized by the indicator functional. We refer to Elliot and Luckhaus \cite{Elliott2} who first introduced this obstacle formulation of the energy $E^{\varepsilon}$.

\subsection{The density function}

The density distribution $\rho$ depends directly on the phase-field $\bphi$ and this way, $\rho$ is not just a given function but represents the density of the actual structure we want to optimize.

To this end, we assume that the density function $\rho$ belongs to $C^{1,1}_\text{loc}(\R^N;\R)$ and is uniformly positive, i.e., there exists a constant $\rho_0>0$ such that $\rho(\bphi)\ge \rho_0$ for all $\bphi\in \R^N$. This directly yields
\begin{align}
    \rho_0\abs{\bu}^2&\le \rho(\bphi)\abs{\bu}^2
\end{align}
for all $\bphi, \B u\in \R^N$. For any fixed $\bphi\in \R^N$, there exist constants $ C_{\bphi},C^{\prime}_{\bphi}>0$ (that may locally depend on $\bphi$, i.e., $C_{\bphi}$ and $C^{\prime}_{\bphi}$ can be chosen uniformly on bounded sets), such that
\begin{alignat}{2}\label{rhabs}
    \begin{aligned}
            \abs{\rho(\bphi)\bu\cdot \B{v}}
        &\le 
            C_{\bphi}\abs{\bu}\abs{\B{v}},\\
            \abs{\rho^{\prime}(\bphi)\B{h}\bu\cdot\B{v}}
        &\le 
            C^{\prime}_{\bphi}\abs{\B{h}}\abs{\bu}\abs{\B{v}},
    \end{aligned}
\end{alignat}
for all $\bu,\B{v}\in \R^d$. Next, for any function $\varrho\in C(\R^N;\R)$, we define
\begin{align*}
        \big({\B{f}},{\B{g}}\big)_{\varrho}
     \coloneqq
        \int_{\Omega}\varrho\,\B{f}\cdot\B{g}\text{\,d}x
      \quad \text{for all}\; \B f, \B g\in \Lzd.
\end{align*}
Due to the above assumptions, we can use this notation to define a family of scalar products on $\Lzd$ depending on $\bphi\in \LuN$ by
\begin{align}\label{SKP}
        \rsp{\B{f}}{\B{g}}{\bphi}
    \coloneqq 
        \int_{\Omega}\rho(\bphi)\B{f}\cdot\B{g}\text{\,d}x
     \quad \text{for all}\; \B f, \B g\in \Lzd.
\end{align} 
These scalar products canonically induce norms that are all equivalent to the standard norm on $\Lzd$. To indicate the norm we consider $\Lzd$ to be equipped with, we will use the notation $\Lzrp$.

A reasonable choice of $\rho$ would be
\begin{align}\label{EX:RHO}
        \rho(\bphi)
    =
        \sum_{i=1}^{N} \varrho_i\, \varphi_i
    =
        \sum_{i=1}^{N-1}\varrho_i\, \varphi_i+\varepsilon^2\tilde{\varrho}_N\varphi_N, \quad \bphi\in \B G.
\end{align}
Here, for any $i\in\{1,...,N-1\}$, the coefficient $\varrho_i>0$ stands for the density of the $i$-th material which is assumed to be constant. In our model we interpret the void as a material of very low density.
Hence, we chose $\varrho_N=\varrho_N(\varepsilon)=\varepsilon^2\tilde{\varrho}_N$ as corresponding density, where $\tilde{\varrho}_N>0$ is a fixed constant. The scaling with $\varepsilon^2$ would guarantee the desired behaviour of $\rho$ in the sharp interface limit, see \cite{Blank} who treat the sharp interface limit for a related problem.

However, for the sake of mathematical analysis, we have to extend the definition of $\rho$ onto the hyperplane $\Sigma^N$. To this end, we define the cut-off function
\begin{align}
        \sigma_\delta:\R\to\R
        \quad s
    \mapsto
        \begin{cases}
            -\delta &\text{if}\;\; s \le -\delta, \\
            a_\delta  &\text{if}\;\; -\delta< s<0, \\
            s &\text{if}\;\; 0\le s \le 1,\\
            b_\delta &\text{if}\;\; 1 < s < 1+\delta, \\
            \delta &\text{if}\;\; s \ge 1+\delta, \\
        \end{cases}
\end{align}
for any $\delta>0$ which will be specified later.
Here, $a_\delta$ and $b_\delta$ are monotonically increasing $C^{1,1}$-functions such that $\sigma_\delta \in C^{1,1}(\R;\R)$. 
We now define the function $\rho$ by
\begin{align}
\label{DEF:RHO}
    \rho:\R^N\to\R,\quad  \bphi\mapsto 
    \sum_{i=1}^{N} \varrho_i\, \sigma_\delta(\varphi_i), 
    \quad \bphi\in \Sigma^N.
\end{align} 
Obviously, it holds that $\rho \in C^{1,1}(\R^N;\R)$ and
since $\sigma_\delta(\bphi_i) = \bphi_i$ for all $i\in\{1,...,N\}$ as long as $\bphi\in \B{G}$, the relation \eqref{EX:RHO} holds true for this definition.

It remains to show that $\rho$ is uniformly positive, at least if $\delta$ is chosen sufficiently small. To this end, we fix an arbitrary vector $\bphi\in\Sigma^N$ and define the index sets
\begin{align*}
        I := \{1,...,N\},
    \quad
        I_{<0}:=\{ i \in I \,\vert\, \varphi_i < 0 \},
    \quad
        I_{\ge 0} := I \setminus I_{<0}.
\end{align*}
Recalling the definition of $\Sigma^N$, we infer that
\begin{align*}
        \sum_{I_{\ge 0}} \varphi_i \ge 1,
    \quad\text{and thus also}\quad 
        \sum_{I_{\ge 0}} \sigma_\delta(\varphi_i) \ge 1.
\end{align*}
Choosing
\begin{align*}
        M:=\underset{i\in I}{\max}\; \varrho_i, 
    \quad
        m:=\underset{i\in I}{\min}\; \varrho_i,
    \quad
        \delta := \frac{m}{2MN} > 0,
    \quad\text{and}
    \quad 
        \rho_0 := \frac m 2 > 0 
\end{align*}
we conclude the estimate
\begin{align*}
        \rho(\bphi) 
    = 
        \sum_{I_{\ge 0}} \varrho_i\, \sigma_\delta(\bphi_i) \; 
     + \; \sum_{I_{< 0}} \varrho_i\, \sigma_\delta(\bphi_i) 
    \ge 
        m - \delta M N 
    = 
        \rho_0 > 0.
\end{align*}
Since $\bphi\in \Sigma^N$ was arbitrary, this estimate holds for all $\bphi\in \Sigma^N$. We point out that $\rho_0$ does not depend on $\bphi$ and thus, this estimate is uniform. This means that the function $\rho$ defined in \eqref{DEF:RHO} is admissible as it exhibits all demanded properties.

\subsection{The elasticity tensor}

Another important tool in linear elasticity are the tensors appearing in Hooke's Law (see, e.g.,\cite{Eck,Gurtin}), namely the strain and the elasticity tensor which describe the stress tensor.
To introduce the strain tensor we consider the displacement vector $\bu: \Omega \to \R^d$ 
that describes the deformation of the structure under applied forces or vibrations. Now, the strain tensor of $\bu$ can be defined as
\begin{align*}
    \mathcal{E}(\bu)\coloneqq\left(\nabla \bu\right)^{\text{sym}},
\end{align*}
where $\mathcal{A}^{\text{sym}}\coloneqq \frac{1}{2}\big(\mathcal{A}+\mathcal{A}^T\big)$ for any matrix $\mathcal{A}\in \R^{d\times d}$.
The elasticity tensor $\C$ is a fourth order tensor whose components are demanded to fulfill $\C_{ijkl}\in C^{1,1}_\text{loc}(\R^N,\R)$ as well as the symmetry properties
\begin{align}\label{sp}
    \C_{ijkl}=\C_{jikl}=\C_{ijlk}=\C_{klij}.
\end{align}
for all $i,j,k,l\in \left\{1,\dots, d\right\}$.
From the regularity property we conclude that for any $\bphi\in \R^N$, there exist constants $\Lambda_{\bphi},\Lambda^{\prime}_{\bphi}>0$ locally depending on $\B{\varphi}$ such that
\begin{alignat}{2}\label{PC1}
    \begin{aligned}
            \abs{\C(\bphi)\mathcal{A}:\mathcal{B}}
        &\le 
            \Lambda_{\bphi}\abs{\mathcal{A}}\abs{\mathcal{B}},\\
            \abs{\C^{\prime}(\bphi)\B{h}\mathcal{A}:\mathcal{B}}
        &\le 
            \Lambda^{\prime}_{\bphi}\abs{\B{h}}\abs{\mathcal{A}}\abs{\mathcal{B}}, 
    \end{aligned}
\end{alignat}
for all symmetric matrices 
$\mathcal{A},\mathcal{B}\in \R^{d\times d}\backslash\left\{ \B{0}\right\}$ and $\B{h}\in \R^N$,
where
\begin{align*}
        \mathcal{A}:\mathcal{B} 
    &\coloneqq 
        \sum_{i,j=1}^{d}\mathcal{A}_{ij}\mathcal{B}_{ij}\;,
\end{align*}
and
\begin{align*}
        \C^{\prime}(\bphi)\B{h}
    &= 
        \left(\sum_{m=1}^{N}\partial_m\C_{ijkl}(\bphi)h_m\right)_{i,j,k,l=1}^{d}.
\end{align*}
denotes the derivative of $\C(\bphi)$ in the direction $\B h$.
Furthermore, we demand that there exists a positive constant $\theta$ such that for all symmetric matrices $\mathcal{A}\in \R^{d\times d}\backslash\left\{ \B{0}\right\}$ 
and for all $\bphi,\B{h}\in \R^N$ it holds
\begin{align}\label{PC2}
    \theta \abs{\mathcal{A}}^2\le \C(\bphi)\mathcal{A}:\mathcal{A}.
\end{align}
Recall that the application of a fourth order tensor onto a quadratic matrix is given by
\begin{align*}
    \left(\C\mathcal{A}\right)_{ij}=\sum_{k,l=1}^{d}\C_{ijkl}\mathcal{A}_{kl}.
\end{align*}
A concrete choice of the elasticity tensor in analogy to the construction of $\rho$ is
\begin{align*}
        \mathbb{C}(\bphi)
    =
        \sum_{i=1}^{N}\mathbb{C}_i\varphi_i
    =
        \sum_{i=1}^{N-1}\mathbb{C}_i\varphi_i+\varepsilon^2\tilde{\mathbb{C}}_N\varphi_N,
        \quad \bphi\in \B G,
\end{align*}
where  for $i=1,\dots,N-1$, $\mathbb{C}_i$ and $\tilde{\mathbb{C}}_N$ denote constant material specific elasticity tensors. To guarantee \eqref{PC2} we need to assume the existence of positive constants $\tilde{\vartheta}_i,\vartheta_i$ such that for all ${\mathcal{A}}\in \mathbb{R}^{d\times d}\backslash \left\{\B{0}\right\}$, it holds that
\begin{align*}
        \tilde{\vartheta}_i\abs{{\mathcal{A}}}^2
    \le 
        \mathbb{C}^i{\mathcal{A}}:{\mathcal{A}}
    \le
        \vartheta_i\abs{{\mathcal{A}}}^2,
\end{align*}
for all $i=1,\dots,N$. Now, proceeding similarly as for the density $\rho$, we can construct an extension to $\mathbb{R}^N$ taking \eqref{PC2} into account. For more details we refer to \cite[Sect.~2.2]{Blank}. 

\subsection{The system of PDEs describing the elastic structure}

We now introduce the system of equations describing the elastic structure:
\begin{alignat}{3}
    \left\{
        \begin{aligned}\label{EWP}
                -\nabla \cdot\left[\C(\bphi)\mathcal{E}(\bw)\right]
            &=
                \lambda^{\bphi}\rho(\bphi)\bw&&\text{in }\Omega,\\
                \bw
            &=
                \B{0}&&\text{on }\Gamma_D,\\
                \left[\C(\bphi)\mathcal{E}(\bw)\right]\B{n}
            &=\B{0} &&\text{on }\Gamma_0.
        \end{aligned}
    \right.
\end{alignat}
Here, $\B{n}$ is the outer unit normal vector to the boundary $\partial\Omega=\overline{\Gamma_D\,\cup\,\Gamma_0}$. The subsets  $\Gamma_D,\Gamma_0\subset \partial\Omega$ are relatively open and satisfy $\Gamma_D\,\cap\,\Gamma_0 = \emptyset$ and $\mathcal{H}^{d-1}\left(\Gamma_D\right)>0$, where $\mathcal{H}^{d-1}$ denotes the $\left(d-1\right)$-dimensional Hausdorff measure.
To consider this problem in the weak sense, we define the closed subspace
\begin{align*}
        H^1_D(\Omega;\R^d) 
    := 
        \left\{ \left.\bet\in H^1(\Omega;\R^d) \,\right|\, \bet = \B 0 \;\text{a.e. on}\; \Gamma_D \right\}
    \subset 
        H^1(\Omega;\R^d).
\end{align*}
Endowed with the standard inner product and norm given by
\begin{align*}
        (\cdot{,}\cdot)_{\Hd} 
    := 
        (\cdot{,}\cdot)_{\HI}\,, 
        \quad \norm{\,\cdot\,}_{\Hd}
    := 
        \norm{\,\cdot\,}_{\HI}\,,
\end{align*}
$\Hd$ is a Hilbert space.
For any matrices $\mathcal{A},\mathcal{B}\in \R^{d\times d}$ and any fourth-order tensor $\mathcal C\in\R^{d\times d \times d\times d}$, we introduce the notation
\begin{align*}
        \langle \mathcal{A},\mathcal{B}\rangle_{\mathcal C}
    &\coloneqq 
        \int_{\Omega}\mathcal{A}:\mathcal C\mathcal{B}\text{\,d}x.
\end{align*}
Then the mapping
\begin{align}\label{eq:newinner}
    \BE{\cdot}{\cdot} : \Hd\times\Hd \to \R,\quad (\B w,\B \eta)\mapsto \BE{\bw}{\bet}
\end{align}
defines a scalar product on $\Hd$.
By Korn's inequality (see, e.g., \cite{Zeidler4}), the norm induced by this inner product is equivalent to the standard norm on $\Hd$. In what follows, we will always choose for a given $\B{\varphi}$ this inner product and induced norm on $\Hd$.

Using this notation and invoking the symmetry property \eqref{sp}, the weak formulation of $\eqref{EWP}$ can be expressed as
\begin{align}\label{WWP}
        \BE{\bw}{\bet}
    =
        \lambda^{\bphi}\rsp{\bw}{\bet}{\bphi}
        \quad\text{for all}\; \bet\in H^1_D(\Omega;\R^d).
\end{align}
In Section~3, we will see that for any $\bphi\in L^\infty(\Omega,\R^N)$, there exists a sequence of eigenvalues 
\begin{align*}
    0<\lambda_1^{\bphi}
    \le\lambda_2^{\bphi}
    \le\lambda_3^{\bphi}
    \le\cdots \to \infty
\end{align*}
and corresponding eigenfunctions $\{\bw_1^\bphi,\bw_2^\bphi,...\}\subset H^1_D(\Omega;\R^d)$ which form an orthonormal basis of $L^2_\bphi(\Omega;\R^d)$.

We also want to mention that in \cite{Rousselet} such eigenvalue problems are analyzed on a general, rather abstract level. The eigenvalue problem therin is given by the relation
\begin{align}
	\label{ROUS:1}
	a\big(\B\varphi;w(\B\varphi),\B\eta\big) = \lambda^{\B\varphi} b\big(\B\varphi;w(\B\varphi),\B\eta\big),\quad \B\eta\in H
\end{align}
where $\lambda^{\B\varphi}$ and $w(\B\varphi)$ stand for the corresponding eigenvalue and eigenfunction, respectively, $H$ is a Hilbert space and $\B{\eta}\in H$ can be interpreted as a test function. For the analysis, it is assumed that $a$ and $b$ are of the form
\begin{align}
	\label{ROUS:2}
	a(\B\varphi;w,\B\eta) = \big(A(\B\varphi) w(\B\varphi), \B\eta\big)_H\,,
	\quad
	b(\B\varphi;w,\B\eta) = \big(B(\B\varphi) w(\B\varphi), \B\eta\big)_H\,,	
\end{align}
where $A$ and $B$ are linear, continuous operators, $B$ is compact, and $(\cdot,\cdot)_H$ denotes the inner product on $V$.
For this problem, continuity and (semi-)differentiability of $\lambda^\bphi$ and $\B w(\bphi)$ with respect to $\bphi$ is established using an approach involving inverse operators that differs from the one discussed in this paper. However, optimization problems are not addressed in \cite{Rousselet}. The concept of ``semi-differentiability'' applied to our setting will be explained in Section~5.1 in more detail.

Next, we introduce the structural optimization problem in which 
the system \eqref{EWP} can be regarded as the state equation. 

\subsection{The structural optimization problem} \label{SOPT}

We also want to introduce constraints on the structure which prescribe void or material in certain parts of $\Omega$. Mathematically speaking, we fix two disjoint measurable sets $S_i\subset \Omega$ with $i\in \left\{0,1\right\}$ and define the set
\begin{align*}
        \B{U}_{c}
    \coloneqq 
        \left\{
            \bphi\in \HN
            \left| 
            \,\varphi^N=0 \text{ a.e. on } S_0 \text{ and } \varphi^N=1 \text{ a.e. on } S_1
            \right.
        \right\},
\end{align*}
to fix material on $S_0$ and complete void on $S_1$.

For $l\in\N$ and $i_1,\dots,i_l\in \mathbb{N}$, the eigenvalues $\lambda_{i_1},...,\lambda_{i_l}$ are to be penalized via a function
\begin{align*}
    \Psi: \left(\R_{>0}\right)^l\to \R, 
\end{align*}
which is assumed to be $C^1$ and bounded from below, i.e., we find a constant $c_{\Psi}>0$ such that $\Psi(\bx)\ge -c_{\Psi}$ for all $\bx\in \left(\R_{>0}\right)^l$. As mentioned above, we have to include the Ginzburg--Landau energy into our minimization problem. Hence we define the objective functional as
\begin{align}
\label{OBJ}
        J_{l}^{\eps}(\bphi)
    \coloneqq
        \Psi(\lambda_{i_1}^{\bphi},\dots, \lambda_{i_l}^{\bphi})
    +
        \gamma E^{\eps}(\bphi),
\end{align}
with $\gamma>0$. Consequently, the overall optimization problem reads as
\begin{align}\label{Pepsla} \tag{$\mathcal{P}^{\eps}_{l}$}
    \left\{
        \begin{aligned}
            &\text{ min} 
            &&J^{\eps}_l(\bphi),\\
            &\text{ s.t.} 
            &&\bphi\in \mathcal{\B{\mathcal{G}}}^{\B{m}}\cap \B{U}_c,\\
            &
            &&\lambda^{\bphi}_{i_1},\dots, \lambda^{\bphi}_{i_l} 
            \;\text{are eigenvalues of } \eqref{WWP}.
        \end{aligned}
    \right.
\end{align}

To investigate this optimal control problem, we first have to establish the existence of eigenvalues along with suitable associated eigenfunctions. This topic is addressed in the next section.

\subsection{A combination of compliance and eigenvalue optimization}\label{COPT}

In \cite{Blank}, the problem of minimizing the mean compliance
\begin{align*}
            F(\bu,\bphi)
        =
            \int_{\Omega}
                \big(1-\varphi^N\big)\B{f}\cdot\bu
            \text{\,d}x
        +
            \int_{\Gamma_g}\B{g}\cdot\bu\,\text{d}\Gamma, 
\end{align*}
with $\B{f}\in \Lzd$ and $\B{g}\in L^2(\Gamma_g,\mathbb{R}^d)$, and the deviation with respect to a target displacement $\bu_{\Omega}\in \Lzd$ given by 
\begin{align*}
        J_0(\bu,\bphi)
    =
        \left(
            \int_{\Omega}c\big(1-\varphi^N\big)\abs{\bu-\bu_{\Omega}}^2\text{\,d}x
        \right)^{\nu},
         \quad \nu\in (0,1],
\end{align*}
is also considered. Here, $c\in L^{\infty}(\Omega)$ denotes a function with $\abs{\text{supp\,}c}>0$, where $\abs{\text{supp\,}c}$ stands for the Lebesgue measure of the support. 
The boundary $\partial\Omega$ is split into two relatively open, disjoint subsets $\Gamma_C,\Gamma_g\subset \partial\Omega$ such that $\partial\Omega=\overline{\Gamma_C\cup\Gamma_g}$.
Moreover, the state equation is determined by the mean compliance in order to obtain 
\begin{align*}
	\bu\in \HC\coloneqq \Big\{ \bet\in H^1(\Omega;\R^d) \,\Big|\, \bet = \B 0 \;\text{a.e. on}\; \Gamma_C \Big\}
\end{align*}
as the displacement vector under the given forces. It reads as
\begin{align}\label{wState}
\begin{cases}
\begin{array}{rll}
-\nabla\cdot\left[\C(\bphi)\mathcal{E}(\bu)\right]
&=
\big(1-\varphi^N\big)\B{f}
& \quad\text{in }\Omega,\\
\bu
&=
\B{0}
&\quad\text{on }\Gamma_C,\\
\left[\C(\bphi)\mathcal{E}(\bu)\right]\B{n}
&=
\B{g}
&\quad\text{on }\Gamma_g.
\end{array}
\end{cases}
\end{align}

Combining this problem with the one discussed in Subsection~\ref{SOPT}, we obtain a structure that is on the one hand as stiff as possible (i.e., it has small compliance) and on the other hand realizes the desired vibration properties (e.g., a large first eigenvalue). 
In Section~7, we will present an existence result as well as the variational inequality for this combined problem.
Then the combination of $\left(\mathcal{P}^{\eps}\right)$ in \cite{Blank} and $\eqref{Pepsla}$ reads as
\begin{align}\tag{$\mathcal{K}^{\eps}_{l}$}\label{Kepsl}
    \left\{
        \begin{aligned}
                &\min &&I_l^{\eps}(\bu,\bphi)
            =
                \alpha F(\bu,\bphi)
                +\beta J_0(\bu,\bphi)
                +\gamma E^{\eps}(\bphi)
                +\Psi(\lambda^{\bphi}_{i_1},\dots,\lambda^{\bphi}_{i_l})\\
            &\text{ s.t.}
            &&(\bu,\bphi)
                \in H^1_{C}(\Omega,\R^d)\times H^1(\Omega,\R^N),\\[0.25ex]
            &&&
                \eqref{wState} \text{ is fulfilled}, 
                \bphi\in \mathcal{\B{\mathcal{G}}}^{\B{m}}\cap \B{U}_c,\\
            &&&\text{and } 
                \lambda^{\bphi}_{i_1},\dots,\lambda^{\bphi}_{i_l} 
                \text{ are}\text{ eigenvalues of } \eqref{WWP},
        \end{aligned}
    \right.
\end{align}
where $\alpha,\beta\ge 0$, $\gamma,\eps>0$, $\B{m}\in (0,1)^N\cap\Sigma^N$.


\section{Analysis of the state equation}

\begin{Def}[Definition of eigenvalues and eigenfunctions]\label{DEF:EEW}
	Let $\bphi\in \LuN$ be arbitrary. Then $\lambda^\bphi$ is called an \emph{eigenvalue} of the state equation \eqref{EWP} if there exists a nontrivial weak solution $\bw^\bphi$ to the system \eqref{EWP}, i.e., $\B 0\neq \bw^\bphi \in \Hd$ and it holds that
	\begin{align}\label{WEstate}
	       \BE{\bw^{\bphi}}{\bet}
	   =
	       \lambda^{\bphi}
	       \big(\bw^{\bphi},\bet\big)_{\rho(\bphi)}
	\quad \text{for all $\bet\in H^1_D(\Omega,\R^d)$}.
	\end{align}
	In this case, the function $\bw^\bphi$ is called an \emph{eigenfunction} to the eigenvalue $\lambda^\bphi$.
\end{Def}

The assumptions of the previous section allow us to prove two classical functional analytic results in our setting.
\begin{Thm}[Existence and properties of eigenvalues and eigenfunctions]\label{EEW}
	$\,$\newline Let ${\bphi\in \LuN}$ be arbitrary.
	\begin{enumerate}[label=$\mathrm{(\alph*)}$, ref = $\mathrm{\alph*}$, leftmargin=*]
		\item
		There exists a sequence
		\begin{align*}
		  \left(
		      \bw^{\bphi}_k,\lambda_k^{\bphi}
		  \right)_{k\in \N}
		\subset\Hd\times \R
		\end{align*}
		possessing the following properties:
		\begin{itemize}
			\item For all $k\in\N$, $\bw^{\bphi}_k$ is an eigenfunction to the eigenvalue $\lambda^\bphi_k$ in the sense of Definition~\ref{DEF:EEW}.
			\item The eigenvalues $\lambda_k^{\bphi}$ (which are repeated according to their multiplicity) can be ordered in the following way:
			\begin{align*}
    			0<\lambda_1^{\bphi}
    			\le\lambda_2^{\bphi}
    			\le\lambda_3^{\bphi}
    			\le\cdots.
			\end{align*}
			Moreover, it holds that $\lambda_k^{\bphi}\to \infty$ as $k\to\infty$, and there exist no further eigenvalues of the state equation $\eqref{WEstate}$.
			\item The eigenfunctions $\{\bw^{\bphi}_1, \bw^{\bphi}_2,\dots\} \subset \Hd$ form an $\Lzrp$-orthonormal basis of the space $\Lzrp$.
		\end{itemize}
		\item
		For $k\in\N$ we have the Courant--Fischer characterization
		\begin{align*}
		      \lambda_k^{\bphi}
		  =
               \underset{V\in \mathcal S_{k-1}}{\max}\min 
		          \left\{
    		          \left. 
    		              \frac{\BE{\bu}{\bu}}{\norm{\bu}^2_{\Lzrp}}
    		          \right\vert
		              \begin{aligned}
		                  &\bu\in V^{\perp,\Lzrp}\cap \Hd, \\
		                  &\bu\neq \B{0}
		              \end{aligned}
		          \right\}.
		\end{align*}
		Here, $\mathcal S_{k-1}$ denotes the collection of all $(k-1)$-dimensional subspaces of $\Hd$. The set $V^{\perp,\Lzrp}$ denotes the orthogonal complement of $V\subset \Lzd$ with respect to the scalar product on $\Lzrp$.
		
		Moreover, the maximum is attained at the subspace
		\begin{align*}
		  V=\langle\bw^{\bphi}_1,\dots,\bw^{\varphi}_{k-1}\rangle_{\textup{span}}.
		\end{align*}
	\end{enumerate}
\end{Thm}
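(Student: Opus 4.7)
The plan is to reduce the generalized eigenvalue problem \eqref{WEstate} to a standard spectral problem for a compact self-adjoint positive operator on the Hilbert space $\Lzrp$, and then invoke the classical Hilbert--Schmidt spectral theorem together with the associated Courant--Fischer min-max characterization. Throughout, $\bphi\in\LuN$ is fixed, so $\rho(\bphi),\,\C(\bphi)\in L^\infty(\Omega)$ and all constants below depend only on $\bphi$.

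First, I would define a solution operator $T:\Lzrp\to\Lzrp$ as follows. For $\B f\in\Lzrp$, consider the problem of finding $\bw\in\Hd$ such that
\begin{align*}
    \BE{\bw}{\bet}=\rsp{\B f}{\bet}{\bphi}\quad\text{for all }\bet\in\Hd.
\end{align*}
The bilinear form on the left-hand side is continuous by \eqref{PC1} and coercive on $\Hd$ by combining the ellipticity assumption \eqref{PC2} with Korn's inequality (this is exactly the statement that \eqref{eq:newinner} is equivalent to the standard inner product on $\Hd$, which the authors have already noted). The right-hand side is a continuous linear functional on $\Hd$ by Cauchy--Schwarz and the boundedness of $\rho(\bphi)$. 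Hence Lax--Milgram provides a unique $\bw\in\Hd$, and I set $T\B f:=\bw$. By construction $T$ is linear and bounded from $\Lzrp$ to $\Hd$. Composing with the compact embedding $\Hd\hookrightarrow\hookrightarrow\Lzd=\Lzrp$ (Rellich--Kondrachov, using that $\rho(\bphi)$ induces an equivalent norm), $T$ is compact as an operator on $\Lzrp$.

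Next I would verify that $T$ is self-adjoint and positive on $\Lzrp$. Self-adjointness follows from the symmetry of $\langle\C(\bphi)\cdot{,}\cdot\rangle$ (consequence of \eqref{sp}): for $\B f,\B g\in\Lzrp$ with $\bw=T\B f$, $\B v=T\B g$, one has
\begin{align*}
    \rsp{T\B f}{\B g}{\bphi}=\rsp{\B g}{\bw}{\bphi}=\BE{\B v}{\bw}=\BE{\bw}{\B v}=\rsp{\B f}{\B v}{\bphi}=\rsp{\B f}{T\B g}{\bphi}.
\end{align*}
Positivity $(T\B f,\B f)_{\rho(\bphi)}=\BE{\bw}{\bw}\ge 0$ follows from \eqref{PC2}, with equality iff $\bw=\B 0$ iff $\B f=\B 0$; hence $T$ is injective and strictly positive. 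The Hilbert--Schmidt theorem then yields an $\Lzrp$-orthonormal basis $\{\bw_k^\bphi\}_{k\in\N}\subset\Lzrp$ of eigenfunctions of $T$ with positive eigenvalues $\mu_1\ge\mu_2\ge\cdots\to 0$. Setting $\lambda_k^\bphi:=1/\mu_k$ and translating $T\bw_k^\bphi=\mu_k\bw_k^\bphi$ back through the defining relation of $T$ gives that each $\bw_k^\bphi$ lies in $\Hd$ and solves \eqref{WEstate} with eigenvalue $\lambda_k^\bphi$, yielding the ordered sequence $0<\lambda_1^\bphi\le\lambda_2^\bphi\le\cdots\to\infty$. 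Completeness of $\{\bw_k^\bphi\}$ in $\Lzrp$ together with the injectivity of $T$ rules out any further eigenvalue: if $(\lambda,\bw)$ solved \eqref{WEstate} with $\bw\neq\B 0$, then $T\bw=\lambda^{-1}\bw$, so $\lambda^{-1}$ appears among the $\mu_k$, hence $\lambda=\lambda_k^\bphi$ for some $k$.

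For part (b), I would apply the Courant--Fischer principle for the compact self-adjoint operator $T$ in the $\Lzrp$-inner product:
\begin{align*}
    \mu_k=\max_{V\in\mathcal S_{k-1}}\min\Bigl\{\rsp{T\bu}{\bu}{\bphi}\,\Bigl|\,\bu\in V^{\perp,\Lzrp},\;\norm{\bu}_{\Lzrp}=1\Bigr\},
\end{align*}
with the outer maximum attained on $V=\langle\bw_1^\bphi,\dots,\bw_{k-1}^\bphi\rangle_{\mathrm{span}}$. The key point is that for every $\bu\in\Hd\setminus\{\B 0\}$, letting $\B f:=\bu$ and $\bw:=T\bu\in\Hd$, the defining relation gives $\BE{\bw}{\bu}=\rsp{T\bu}{\bu}{\bphi}$, and a density/Galerkin argument on the finite-dimensional span of the first $k$ eigenfunctions shows that the extremum of the Rayleigh quotient $\BE{\bu}{\bu}/\norm{\bu}_{\Lzrp}^2$ is inverse to that of $\rsp{T\bu}{\bu}{\bphi}/\norm{\bu}_{\Lzrp}^2$. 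Taking reciprocals (max turns into min and vice versa) and restricting to $\bu\in\Hd$ (which is dense in $\Lzrp$ and contains the eigenfunctions, so does not change the extremum) yields the claimed characterization and attainment of the outer maximum.

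The main obstacle is, as always for generalized eigenvalue problems, the careful passage between the two inner products on $\Hd$ and $\Lzrp$: one must check that eigenfunctions of the compact operator $T$ acting on $\Lzrp$ automatically lie in $\Hd$ (immediate from $T(\Lzrp)\subset\Hd$) and that the Rayleigh quotient formulation over $\Hd$ does not miss an infimum that would be realized only by a limit in $\Lzrp\setminus\Hd$; this is handled by the fact that $T$-eigenfunctions belong to $\Hd$ so the inner max is attained in $\Hd$ itself.
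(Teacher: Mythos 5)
Part (a) of your proposal mirrors the paper's proof exactly: both define the solution operator via Lax--Milgram, establish compactness through the compact embedding $\Hd\hookrightarrow\Lzd$, verify self-adjointness and positivity from the symmetry \eqref{sp} and the coercivity \eqref{PC2}, and invoke the spectral theorem for compact self-adjoint operators.

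For part (b), however, your sketch has two genuine problems. First, the Courant--Fischer statement you quote for $T$ is misordered. Since the eigenvalues $\mu_k$ of $T$ are \emph{decreasing}, the max--min form runs over $k$-dimensional test subspaces (not $(k-1)$-dimensional ones) with the minimum taken over $V$ itself; or equivalently the min--max form runs over $(k-1)$-dimensional subspaces with a maximum over $V^\perp$. As written, for $k=1$ your formula would yield $\min_{\|\bu\|=1}(T\bu,\bu)_\rho = 0\neq \mu_1$. Second, and more substantively, the ``take reciprocals'' step does not carry through at the level of Rayleigh quotients: $(T\bu,\bu)_{\rho(\bphi)}/\|\bu\|^2_{\Lzrp}$ and $\BE{\bu}{\bu}/\|\bu\|^2_{\Lzrp}$ are \emph{not} pointwise reciprocal (their product is $1$ only when $\bu$ is an eigenfunction; in general $(T\bu,\bu)_{\rho(\bphi)}=\BE{T\bu}{T\bu}$, which involves $T\bu$ rather than $\bu$). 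So identifying the two extremal problems requires a genuine argument, and ``a density/Galerkin argument on the finite-dimensional span of the first $k$ eigenfunctions'' is too compressed to bridge this. The paper avoids the issue by proving the max--min characterization for the energy quotient directly: it expands an arbitrary $\bu\in\Hd$ in the $\Lzrp$-orthonormal basis of eigenfunctions, projects the first $k$ eigenfunctions onto a given $(k-1)$-dimensional $V$ to manufacture a nonzero $\B v\in V^{\perp,\Lzrp}\cap\Hd$ whose Rayleigh quotient is $\le\lambda_k$, uses the direct method to show the infimum is attained, and finally plugs in $V=\langle\bw_1^\bphi,\dots,\bw_{k-1}^\bphi\rangle_{\mathrm{span}}$ to get the reverse inequality. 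To salvage your route you would need to make the conversion explicit; otherwise replace it with a direct argument as in the paper.
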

\begin{proof}
	Using the Lax--Milgram theorem and the fact that $\mathcal{H}^{d-1}\left(\Gamma_D\right) > 0$, we conclude that for any $\B f\in \Lzd$, there exists a unique function ${\B v}_{\B f} \in \Hd$ solving the equation
	\begin{align*}
	       \BE{\B{v}_{\B f}}{\bet}
	   =
	       \int_{\Omega}\rho(\bphi)\B{v}_{\B{f}}\cdot\bet\text{\,d}x 
	       \quad \text{for all $\bet\in H^1_D(\Omega,\R^d)$}.
	\end{align*}
	This allows us to define a solution operator 
	\begin{align*}
	   \mathcal T: \Lzd \to \Hd \subset \Lzd,\quad \B f\mapsto {\B v}_{\B f}.
	\end{align*}
	Since $\Hd$ is compactly embedded in $\Lzd$, we can easily show that $\mathcal T$ is a compact, self-adjoint, and bounded linear operator.
	Thus, the assertions in (a) directly follow from the spectral theorem for compact self-adjoint operators (see, e.g., \cite[Sect.~12.12]{Alt}).
	
	To prove (b), we first infer from \eqref{WEstate} that the sequence 
	\begin{align*}
	   \left(\frac{\bw_k^\bphi}{\sqrt{\lambda_k}}\right)_{k\in \N} \subset \Hd,
	\end{align*}
	forms an orthonormal basis of $\Hd$ when taking the inner product \eqref{eq:newinner}. For any $\B{v}\in \Hd$, this yields the representation 
	\begin{align*}
	   \B{v}=\sum_{i=1}^{\infty}\rsp{\B{v}}{\bw_i^{\bphi}}{\bphi}\bw_i^{\bphi},
	\end{align*}
	where the series on the right-hand side converges in $\Hd$. In the following, we will sometimes omit the exponent $\bphi$ for a more convenient depiction.
	
	To establish the Courant--Fischer characterization we now fix an arbitrary subspace $V\in \mathcal S_{k-1}$. Let us denote the orthogonal projection from $\Lzrp$ to $V$ with respect to the scalar product on $\Lzrp$ by
	\begin{align*}
	   P_{\bphi}: \Lzrp \to V\subset \Lzrp.
	\end{align*}
	Since $V$ is a $(k-1)$-dimensional subspace, the family
	\begin{align*}
	   \left\{ 
	       P_{\bphi}(\bw_1),\dots,P_{\bphi}(\bw_k)
	   \right\}\subset V,
	\end{align*}
	must be linearly dependent. Hence, for every $i\in \left\{1,\dots,k\right\}$, we find coefficients $\alpha_i\in \R$ that are not all equal to zero such that
	\begin{align*}
    	   P_{\bphi}\left(\sum_{i=1}^{k}\alpha_i\bw_i\right)
    	=\sum_{i=1}^{k}\alpha_iP_{\bphi}(\bw_i)
    	=\B{0}.
	\end{align*}
	Per construction of the orthogonal projection this is equivalent to
	\begin{align*}
	   \B{v}\coloneqq \sum_{i=1}^{k}\alpha_i\bw_i\in V^{\perp, \Lzrp}\cap\Hd.
	\end{align*}
	As not all of the coefficients vanish, and since the eigenfunctions $\left\{\bw_1,\dots, \bw_k\right\}$ are linearly independent, we know that $\B{v}\neq \B{0}$.
	Using the orthogonality of eigenfunctions and the fact, that the sequence of eigenvalues increases, we conclude that
	\begin{align}
	   \begin{aligned}
	       \inf 
	           &\left\{
    	           \left. 
    	               \frac{\BE{\bu}{\bu}}{\norm{\bu}^2_{\Lzrp}}
	               \right\vert
	               \begin{aligned}
	                   &\bu\in V^{\perp,\Lzrp}\cap \Hd, \\
	                   &\bu\neq \B{0}
	               \end{aligned}
	              \right\}\label{infu}\\[2ex]
	        &\le 
	               \frac{\BE{\B{v}}{\B{v}}}{\norm{\B{v}}^2_{\Lzrp}}
	             =
	                \frac{\sum_{i=1}^{k}\alpha_i^2\lambda_i}{\sum_{i=1}^{k}\alpha_i^2}
	             \le
	                 \lambda_{k}.
	   \end{aligned}
	\end{align}
    As the infimum in \eqref{infu} obviously exists, we can find a minimizing sequence 
	\begin{align*}
	   \left(\bu_l\right)_{l\in \N}\subset \left\{V^{\perp,\Lzrp}\cap \Hd\right\}\backslash\{\B{0}\},
	\end{align*} 
	such that
	\begin{align*}
	       \underset{l\to \infty}{\lim}\frac{\BE{\bu_l}{\bu_l}}{\norm{\bu_l}^2_{\Lzrp}}
	   =
            \inf \left\{
	           \left. 
	               \frac{\BE{\bu}{\bu}}{\norm{\bu}^2_{\Lzrp}}
	           \right\vert
	            \begin{aligned}
	                   &\bu\in V^{\perp,\Lzrp}\cap \Hd, \\
	                   &\bu\neq \B{0}
	             \end{aligned}
	           \right\}.
	\end{align*}
	Now, recalling that the inner product $\BE{\cdot}{\cdot}$ induces a norm on $\Hd$, this implies that the sequence 
	\begin{align*}
    	   \left(\tilde{\bu}_l\right)_{l\in \N}
    	\coloneqq 
            \left(\frac{\bu_l}{\norm{\bu_l}_{\Lzrp}}\right)_{l\in \N},
	\end{align*} 
	is bounded in $\Hd$. Hence, due to the Banach--Alaoglu theorem and the compact embedding $\Hd \subset \Lzd$, there exists a function $\tilde{\B u}\in \Hd$ such that 
	\begin{alignat*}{3}
    	   \tilde{\bu}_l \rightharpoonup \tilde{\bu}\quad\text{in }\Hd,
    	\quad\text{and}\quad
    	   \tilde{\bu}_l \to \tilde{\bu}\quad\text{in } \Lzrp,
	\end{alignat*}
	along a non-relabeled subsequence.
	In particular, since all members of the sequence $\left(\tilde{\bu}_l\right)_{l\in \N}$ are normalized with respect to the $\Lzrp$-norm, it follows that $\tilde{\bu}\neq \B{0}$. 
	
	Furthermore, $V^{\perp,\Lzrp}\cap \Hd$ is a convex and closed subset of $\Hd$. Hence, it is also weakly (sequentially) closed and we thus know that 
	\begin{align*}
	   \tilde{\bu}\in \left\{V^{\perp,\Lzrp}\cap \Hd\right\}\backslash\{\B{0}\}.
	\end{align*}
	Using the fact that norms are always weakly lower semi-continuous we infer that $\tilde{\bu}$ is a minimizer of the expression in $\eqref{infu}$ via the direct method in the calculus of variations.
	
	Since this holds for any arbitrary $(k-1)$-dimensional subspace $V\subset \Hd$, we conclude that 
	\begin{align*}
	   \underset{V\in \mathcal{S}_{k-1}}{\sup}\min 
    	   \left\{
    	       \left. 
    	           \frac{\BE{\bu}{\bu}}{\norm{\bu}^2_{\Lzrp}}\right\vert
    	            \begin{aligned}
    	               &\bu\in V^{\perp,\Lzrp}\cap \Hd, \\
    	               &\bu\neq \B{0}
    	            \end{aligned}
    	   \right\}
	   \le \lambda_k.
	\end{align*} 
	We now select a special $(k-1)$-dimensional subspace defined by
	\begin{align*}
	   V\coloneqq \langle \bw_1,\dots \bw_{k-1}\rangle_{\text{span}}\subset \Hd.
	\end{align*}
	Then the definition of the orthogonal complement yields
	\begin{align*}
	   V^{\perp,\Lzrp}=\langle \bw_k,\bw_{k+1},\dots\rangle_{\text{span}}\subset \Lzrp.
	\end{align*}
	Hence, any $\B{v}\in V^{\perp,\Lzrp}\cap \Hd$ can be represented as
	\begin{align*}
	   \B{v}=\sum_{i=k}^{\infty}\rsp{\B{v}}{\bw_i}{\bphi}\bw_i,
	\end{align*}
	where the series converges in $\Hd$. Consequently, we obtain
	\begin{align*}
	   \BE{\B{v}}{\B{v}}
	=
        \sum_{i=k}^{\infty}
	       \rsp{\B{v}}{\bw_i}{\varphi}^2\lambda_i
	     \ge
	       \lambda_k\norm{\B{v}}_{\Lzrp}^2,
	\end{align*}
	because of the identity 
	\begin{align*}
    	   \sum_{i=k}^{\infty}
    	       \rsp{\B{v}}{\bw_i}{\varphi}^2
    	=
    	   \norm{\B{v}}_{\Lzrp}^2.
	\end{align*}
	Altogether, we conclude that
	\begin{align*}
    	\underset{\substack{V\subset \mathcal{S}_{k-1}\\\dim(V)=k-1}}{\sup}\min 
    	   \left\{
    	       \left.\frac{\BE{\bu}{\bu}}{\norm{\bu}^2_{\Lzrp}}
    	       \right|
    	       \begin{aligned}
    	           &\bu\in V^{\perp,\Lzrp}\cap \Hd, \\
    	           &\bu\neq \B{0}
    	       \end{aligned}
    	   \right\}
    	=\lambda_k.
	\end{align*}
	This means that the maximum is attained at the subspace $V=\langle \bw_1,\dots \bw_{k-1}\rangle_{\text{span}}$ at $\B{0}\neq\bw_k\in V^{\perp,\Lzrp}\cap \Hd$, which proves the claim.
\end{proof}


\section{Continuity of the eigenvalues and the associated eigenfunctions}

\subsection{Weak sequential continuity of the eigenvalues} \label{WSC}

First of all we only consider the first eigenvalue $\lambda_1$ to establish continuity results with respect to the phase-field $\bphi$. Afterwards, we proceed inductively to obtain these results also for all the other eigenvalues.

We consider the mapping
\begin{align*}
    \lambda_1: \HN\cap\LuN \to \R_{>0},\quad
    \bphi\mapsto \ev
\end{align*}
associated with the first eigenvalue.

The first continuity result for the eigenvalue $\lambda_1$ is obtained by proving lower and upper semi-continuity. Lower semi-continuity is established by the following lemma.

\begin{Lem}\label{lus}
	Let $\left(\bphi_k\right)_{k\in\N}\subset \HN\cap \LuN$ be a bounded sequence with respect to the $\LuN$-norm satisfying 
	\begin{align*}
	   \bphi_k\rightharpoonup \bphi\quad \text{in $\HN$ as $k\to\infty$}.
	\end{align*}
	Then it holds that 
	\begin{align*}
	   \ev\le \underset{k\to\infty}{\lim\inf}\lambda^{\bphi_k}_1,
	\end{align*}
	along a non-relabeled subsequence.
\end{Lem}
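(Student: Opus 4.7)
The plan is to exploit the Rayleigh quotient / Courant--Fischer characterization of $\lambda_1^{\bphi}$ proved in Theorem~\ref{EEW}(b), which for $k=1$ reads
\begin{align*}
        \lambda_1^{\bphi}
    =
        \min\left\{
            \left.\frac{\BE{\bu}{\bu}}{\norm{\bu}^2_{\Lzrp}}\,\right|\,
            \bu\in\Hd,\ \bu\neq\B 0
        \right\}.
\end{align*}
First I would pass to a non-relabeled subsequence along which $\lambda_1^{\bphi_k}$ converges to its liminf. Using this minimum-characterization with an arbitrary fixed test function $\bu_0\in\Hd\setminus\{\B 0\}$ together with the $\LuN$-bound on $(\bphi_k)$, the continuity of $\C$, and the uniform positivity $\rho(\bphi_k)\ge\rho_0$, one obtains an a-priori upper bound on $\lambda_1^{\bphi_k}$. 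Normalizing the eigenfunctions so that $\norm{\bw_k}_{L^2_{\bphi_k}}=1$ where $\bw_k\coloneqq\bw_1^{\bphi_k}$, the identity $\lambda_1^{\bphi_k}=\BET{\bw_k}{\bw_k}{\bphi_k}$ combined with the coercivity estimate \eqref{PC2} and Korn's inequality then yields a uniform bound of $(\bw_k)$ in $\Hd$.

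Next, by Rellich--Kondrachov, $\bphi_k\to\bphi$ in $L^2(\Omega;\R^N)$, and passing to a further subsequence we may also assume $\bphi_k\to\bphi$ a.e.\ in $\Omega$, together with $\bw_k\rightharpoonup\bw$ in $\Hd$ and $\bw_k\to\bw$ in $\Lzd$ for some $\bw\in\Hd$. Since $\rho$ and every component $\C_{ijkl}$ are continuous and $(\bphi_k)$ is bounded in $\LuN$, both $\rho(\bphi_k)$ and $\C(\bphi_k)$ converge a.e.\ to $\rho(\bphi)$, $\C(\bphi)$ and are uniformly bounded. The dominated convergence theorem then gives $\rho(\bphi_k)\to\rho(\bphi)$ in every $L^p(\Omega)$ with $p<\infty$, so that $\norm{\bw_k}^2_{L^2_{\bphi_k}}\to\norm{\bw}^2_{L^2_\bphi}$. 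In particular $\norm{\bw}_{L^2_\bphi}^2=1$, hence $\bw\neq\B 0$ is an admissible competitor in the Rayleigh quotient for $\lambda_1^{\bphi}$.

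The central step is then to show
\begin{align*}
    \BE{\bw}{\bw}\le\liminf_{k\to\infty}\BET{\bw_k}{\bw_k}{\bphi_k}.
\end{align*}
To this end I would use the polarization-type decomposition
\begin{align*}
    \BET{\bw_k}{\bw_k}{\bphi_k}
    &=\BET{\bw_k-\bw}{\bw_k-\bw}{\bphi_k}
    +2\BET{\bw}{\bw_k-\bw}{\bphi_k}
    +\BET{\bw}{\bw}{\bphi_k}.
\end{align*}
The first term is non-negative by \eqref{PC2}. For the third, a.e.\ convergence of $\C(\bphi_k)$ with uniform boundedness together with $\C(\bphi)\mathcal E(\bw)\mathbin{:}\mathcal E(\bw)\in L^1(\Omega)$ allows the dominated convergence theorem to give the limit $\BE{\bw}{\bw}$. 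For the cross term, $\C(\bphi_k)\mathcal E(\bw)\to\C(\bphi)\mathcal E(\bw)$ strongly in $\Lzt$ (again by dominated convergence, since $\mathcal E(\bw)\in\Lzt$), while $\mathcal E(\bw_k)-\mathcal E(\bw)\rightharpoonup 0$ in $\Lzt$, so the cross term tends to zero. Combining these three observations with the Rayleigh characterization gives
\begin{align*}
        \lambda_1^{\bphi}
    \le
        \frac{\BE{\bw}{\bw}}{\norm{\bw}_{L^2_\bphi}^2}
    \le
        \liminf_{k\to\infty}\BET{\bw_k}{\bw_k}{\bphi_k}
    =
        \liminf_{k\to\infty}\lambda_1^{\bphi_k},
\end{align*}
which is the claim. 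The main technical obstacle is the lower semicontinuity of the $\C(\bphi_k)$-weighted quadratic form in the face of only weak convergence of $\mathcal E(\bw_k)$ and pointwise (not uniform) convergence of the coefficients $\C(\bphi_k)$; the decomposition above together with the ellipticity \eqref{PC2} resolves precisely this difficulty.
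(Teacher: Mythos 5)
Your argument is correct, and its skeleton coincides with the paper's: normalize the eigenfunctions $\bw_k$ in $\Lzrpa{\bphi_k}$, obtain a uniform $\Hd$-bound from the Courant--Fischer bound and coercivity, extract $\bw_k\rightharpoonup\bw$ in $\Hd$, $\bw_k\to\bw$ in $\Lzd$, verify $\norm{\bw}_{\Lzrp}=1$, and then reduce everything to showing $\liminf_k\BET{\bw_k}{\bw_k}{\bphi_k}\ge\BE{\bw}{\bw}\ge\lambda_1^{\bphi}$. Where you genuinely diverge is in that last lower-semicontinuity step. You expand the square, discard the nonnegative term $\BET{\bw_k-\bw}{\bw_k-\bw}{\bphi_k}\ge 0$ via the ellipticity \eqref{PC2}, and pass to the limit in the cross and fixed terms by the strong--weak pairing and dominated convergence; this is the classical weak lower semicontinuity argument for convex quadratic forms with converging coefficients, and it does not use that $\bw_k$ solves the eigenvalue equation beyond the identity $\lambda_1^{\bphi_k}=\BET{\bw_k}{\bw_k}{\bphi_k}$. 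The paper instead exploits the equation more directly: by Cauchy--Schwarz in $\Lzrpa{\bphi_k}$ and positivity of $\lambda_1^{\bphi_k}$ it bounds
\begin{align*}
    \lambda_1^{\bphi_k}
    =\lambda_1^{\bphi_k}\norm{\bw_k}_{\Lzrpa{\bphi_k}}\norm{\bw}_{\Lzrpa{\bphi_k}}
    \ge \lambda_1^{\bphi_k}\brsp{\bw_k}{\bw}{\bphi_k}
    =\bBET{\bw_k}{\bw}{\bphi_k},
\end{align*}
so the quadratic form is replaced by a \emph{bilinear} pairing against the fixed limit $\bw$, which converges to $\BE{\bw}{\bw}$ using only weak convergence of $\mathcal E(\bw_k)$ and strong convergence of $\C(\bphi_k)\mathcal E(\bw)$ --- no ellipticity constant is needed at this point. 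Both routes are sound; yours is more self-contained and standard, the paper's is slightly slicker and avoids the expansion of the square. Two cosmetic remarks: the conclusion you prove is actually for the full sequence (which is fine, and stronger than the stated ``along a subsequence''), and the space of matrix fields should be $L^2(\Omega;\R^{d\times d})$ rather than $L^2(\Omega;\R^{3\times 3})$ for general $d$.
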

\begin{proof}
	We first notice that the assumptions of Lemma~\ref{lus} imply that $\bphi\in \LuN$. Let $\left\{\bw_1,\bw_2,\dots\right\}\subset \Lzrp$ denote an orthonormal basis of eigenfunctions corresponding to the sequence of eigenvalues $\left(\lambda_i^{\bphi}\right)_{i\in \N}$ from Theorem~\ref{EEW}.
	
	Now, for any $k\in \N$, we choose an arbitrary $\Lzrpa{\bphi_k}$-normalized eigenfunction $\bu^k$ that fulfills $\eqref{WEstate}$ for $\lambda_1^{\bphi_k}$. This choice is not necessarily unique up to multiplication with $\pm 1$, as we do not assume simplicity of $\lambda_1^{\bphi_k}$ or $\lambda_1^{\bphi}$ yet.
	
	Using the Courant--Fischer representation from Theorem~\ref{EEW}(b) for the first eigenvalue and the continuity of $\C$ and $\rho$, we see that the sequence $(\bu^k)_{k\in \N}\subset \Hd$ is bounded. By the Banach--Alaoglu theorem and the compact embedding $\Hd\subset \Lzd$, we infer the existence of a function $\oB{u}\in \Hd$ with
	\begin{alignat}{3}\label{wsco}
    	\begin{aligned}
    	       \bu^k\rightharpoonup \oB{u}\quad \text{in }\Hd,
    	   \quad\text{and}\quad
    	       \bu^k\to \oB{u}\quad \text{in }\Lzrp,
    	\end{aligned}	
	\end{alignat}
	as $k\to \infty$, up to a subsequence. With the help of Lebesgue's theorem and the assumptions on the sequence $\bphi_k$, this yields 
	\begin{align*} 
    	   \brsq{\bu^k}{\bu^k}{\bphi_k}
    	\to 
    	   \brsq{\oB{u}}{\oB{u}}{\bphi},
	\end{align*}
	as $k\to \infty$ after another subsequence extraction. This implies $\norm{\oB{u}}_{\Lzrp}=1$ since the members $\bu^k$ were chosen as $\Lzrpa{\bphi_k}$-normalized eigenfunctions. 
	In particular, this implies that 
	\begin{align}
	   1=\sum_{i=1}^{\infty}\rsp{\overline{\bu}}{\bw_i}{\varphi}^2.
	\end{align}
	Plugging $\oB u$ into the continuous bilinear form $\BE{\cdot}{\cdot}$ on $\Hd$, and invoking the increasing order of the sequence $\left(\lambda_i^{\bphi}\right)_{i\in \N}$, we conclude that
	\begin{align*}
    	   \BE{\oB{u}}{\oB{u}}
    	=
        	\sum_{i=1}^{\infty}
        	 \rsq{\overline{\bu}}{\bw_i}{\varphi}^2\lambda^{\bphi}_i
    	\ge
    	      \lambda^{\bphi}_1.
	\end{align*}
	
	If we can now show that
	\begin{align}\label{suse}
	   \underset{k\to\infty}{\lim\inf}\lambda^{\bphi_k}_1\ge \BE{\oB{u}}{\oB{u}},
	\end{align}
	the proof would be complete.
	Using the convergence results we have just established, the Cauchy--Schwarz inequality and the weak formulation \eqref{WEstate}, we infer that
	\begin{alignat*}{2}
    	   \underset{k\to\infty}{\lim\inf}\lambda^{\bphi_k}_1
    	&=
        	\left(\underset{k\to\infty}{\lim\inf}\lambda^{\bphi_k}_1\right)
        	\left(\underset{k\to\infty}{\lim}\norm{\bu^k}_{\Lzrpa{\bphi_k}}\right)
        	\left(\underset{k\to\infty}{\lim}\norm{\oB{u}}_{\Lzrpa{\bphi_k}}\right)\\[1ex]
    	&=
        	\underset{k\to\infty}{\lim\inf}\left[\lambda^{\bphi_k}_1
        	\norm{\bu_k}_{\Lzrpa{\bphi_k}}
        	\norm{\oB{u}}_{\Lzrpa{\bphi_k}}\right]\\[1ex]
    	&\ge 			
        	\underset{k\to\infty}{\lim\inf}
        	\left[
        	   \lambda^{\bphi_k}_1\brsp{\bu^k}{\oB{u}}{\bphi_k}
        	\right]\\[1ex]
    	&=
    	   \underset{k\to\infty}{\lim\inf}\;\bBET{\bu^k}{\oB{u}}{\bphi_k}.
	\end{alignat*}
	We further know that
	\begin{align*}
    	\begin{aligned}
    	       &\bBK{\bu^k}{\oB{u}}{k}-\bBE{\oB{u}}{\oB{u}}\\
    	   &\quad=
    	       \left[
    	           \bBK{\bu^k}{\oB{u}}{k}-\bBE{\bu^k}{\oB{u}} 
    	       \right] \\
    	   &\qquad +
    	       \left[
    	           \bBE{\bu^k}{\oB{u}}-\bBE{\oB{u}}{\oB{u}}
    	       \right].
    	\end{aligned}
	\end{align*}
	Using Lebesgue's convergence theorem, the boundedness of $(\bu^k)_{k\in \N}$, the local Lipschitz continuity of $\C$ and the assumptions on $\left(\bphi_k\right)_{k\in \N}$, we conclude that the first summand converges to zero along a non-relabeled subsequence. The second summand converges to zero as a direct consequence of $\eqref{wsco}$.
	
	In summary, we obtain that
	\begin{align*}
    	   \underset{k\to\infty}{\lim\inf}\lambda^{\bphi_k}_1
    	\ge
    	   \underset{k\to\infty}{\lim\inf}\bBET{\bu^k}{\oB{u}}{\bphi_k}
    	=
    	   \bBE{\oB{u}}{\oB{u}}\ge \lambda^{\bphi}_1.
	\end{align*}
	which completes the proof.
\end{proof}

Now, we establish the corresponding result for weak upper semi-continuity.

\begin{Lem}\label{los}
	Let $\left(\bphi_k\right)_{k\in\N}\subset \HN\cap \LuN$ be a bounded sequence with respect to the $\LuN$-norm satisfying 
	\begin{align*}
	   \bphi_k\rightharpoonup \bphi\quad \text{in $\HN$ as $k\to\infty$}.
	\end{align*}
	Then it holds that 
	\begin{align*}
	   \ev\ge \underset{k\to\infty}{\lim\sup\,}\lambda^{\bphi_k}_1,
	\end{align*}
    along a non-relabeled subsequence.
\end{Lem}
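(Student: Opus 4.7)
The plan is to exploit the Rayleigh quotient characterization for $\lambda_1$, which follows from Theorem~\ref{EEW}(b) applied with the trivial subspace $V=\{\B 0\}$:
\begin{align*}
    \lambda_1^{\B\psi}
    = \min\left\{
        \left.\frac{\BET{\bu}{\bu}{\B\psi}}{\norm{\bu}^2_{\Lzrpa{\B\psi}}}
        \;\right\vert\; \bu\in \Hd,\; \bu\neq \B 0
    \right\},
    \quad \B\psi\in\LuN.
\end{align*}
In contrast to Lemma~\ref{lus}, I do not need to find a well-behaved sequence of eigenfunctions along $(\bphi_k)_{k\in\N}$; it suffices to use one fixed competitor associated with the limit $\bphi$.

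Concretely, the first step is to pick an eigenfunction $\bw=\bw^\bphi_1\in \Hd$ to the first eigenvalue $\ev$, normalized so that $\norm{\bw}_{\Lzrp}=1$, whence $\BE{\bw}{\bw}=\ev$. Using $\bw$ as a test element in the Rayleigh quotient associated with $\bphi_k$, I immediately obtain
\begin{align*}
    \lambda^{\bphi_k}_1 \,\le\, \frac{\BET{\bw}{\bw}{\bphi_k}}{\norm{\bw}^2_{\Lzrpa{\bphi_k}}}
    \quad \text{for all } k\in\N.
\end{align*}
The claim will therefore follow once I show that the right-hand side converges to $\ev$ along a non-relabeled subsequence.

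The second step is to pass to the limit in the numerator and denominator. Since $\bphi_k\rightharpoonup \bphi$ in $\HN$ with $(\bphi_k)$ bounded in $\LuN$, the compact embedding $\HN\hookrightarrow \Lzrpa{}^2(\Omega;\R^N)$ (Rellich--Kondrachov) provides strong $L^2$-convergence, and after extracting a non-relabeled subsequence I may assume $\bphi_k\to\bphi$ pointwise almost everywhere on $\Omega$. Combined with the local Lipschitz continuity of $\C$ and $\rho$ (cf.\ \eqref{rhabs} and \eqref{PC1}), the uniform $L^\infty$-bound on $(\bphi_k)$ yields estimates of the form
\begin{align*}
    \big|\big[\C(\bphi_k)-\C(\bphi)\big]\mathcal E(\bw):\mathcal E(\bw)\big| \,\le\, C\,|\mathcal E(\bw)|^2,
    \qquad
    \big|\big[\rho(\bphi_k)-\rho(\bphi)\big]|\bw|^2\big| \,\le\, C\,|\bw|^2,
\end{align*}
with a constant $C>0$ independent of $k$, while the left-hand sides converge to zero a.e. Since $|\mathcal E(\bw)|^2,|\bw|^2\in L^1(\Omega)$, Lebesgue's dominated convergence theorem yields
\begin{align*}
    \BET{\bw}{\bw}{\bphi_k}\to \BE{\bw}{\bw}=\ev,
    \qquad
    \norm{\bw}^2_{\Lzrpa{\bphi_k}}\to \norm{\bw}^2_{\Lzrp}=1.
\end{align*}
Taking the limit superior of the inequality above then gives $\limsup_{k\to\infty}\lambda^{\bphi_k}_1\le \ev$, as required.

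I expect no serious obstacle here: the key technical ingredient is the dominated-convergence argument for the $\bphi$-dependent bilinear forms evaluated at the fixed test function $\bw$. The proof is essentially easier than that of Lemma~\ref{lus} because no compactness has to be established for eigenfunctions of the perturbed problems; only the limit eigenfunction enters, and all continuity questions reduce to the convergence $\C(\bphi_k)\to \C(\bphi)$ and $\rho(\bphi_k)\to\rho(\bphi)$ in a sense compatible with integrating against the fixed $L^2$-data $\mathcal E(\bw)$ and $\bw$.
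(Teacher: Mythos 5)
Your argument is correct and is precisely the one the paper intends: the paper's proof of this lemma is only a pointer to \cite[Thm.~8.1.3]{Henrot} together with the Courant--Fischer representation, and the standard argument behind that reference is exactly what you carry out, namely testing the Rayleigh quotient for $\lambda_1^{\bphi_k}$ with the fixed $\Lzrp$-normalized eigenfunction $\bw_1^{\bphi}$ and passing to the limit via a.e.\ convergence of $\bphi_k$, the uniform $L^\infty$-bound, and dominated convergence for $\C(\bphi_k)$ and $\rho(\bphi_k)$. No gaps; your write-up simply supplies the details the paper omits.
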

\begin{proof}
	As $\C$ and $\rho$ satisfy suitable continuity properties we can proceed as in \cite[Thm.~8.1.3]{Henrot} and use once more the Courant--Fischer representation for the first eigenvalue to prove the claim.
\end{proof}

Combining both lemmata we can conclude that $\lambda_1$ is weakly sequentially continuous.

\begin{Cor}\label{Wst}
	Let $\left(\bphi_k\right)_{k\in\N}\subset \HN\cap \LuN$ be a bounded sequence with respect to the $\LuN$-norm satisfying 
	\begin{align*}
	   \bphi_k\rightharpoonup \bphi\quad \text{in $\HN$ as $k\to\infty$},
	\end{align*} 
	and let $(\bu^k)_{k\in\N}\subset \Hd$ be a sequence of $\Lzrpa{\bphi_k}$-normalized eigenfunctions to the eigenvalues $(\lambda_1^{\bphi_k})_{k\in\N}$, i.e., $\bu_k$ satisfies $\eqref{WEstate}$ written for $\lambda_{1}^{\bphi_k}$ for every $k\in\N$.
	
	Then it holds that 
	\begin{align}\label{lko}
	   \lambda^{\bphi_k}_1\to \ev, \quad\text{as $k\to\infty$},
	\end{align}
	i.e., the \emph{whole} sequence of eigenvalues converges and not just a subsequence.
	
	Furthermore, there exists a $\Lzrp$-normalized eigenfunction $\oB{u}\in \Hd$ to the eigenvalue $\lambda_1^{\bphi}$  such that
	\begin{alignat*}{2}
    	   \bu^k\rightharpoonup\oB{u}\quad \text{in }\Hd,
    	\quad\text{and}\quad
    	   \bu^k\to \oB{u}\quad \text{in }\Lzrp,
	\end{alignat*}
	as $k\to \infty$, along a non-relabeled subsequence.
\end{Cor}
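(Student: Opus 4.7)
The argument has two parts: first, convergence of the \emph{entire} sequence of eigenvalues, and second, identification of the limit of the eigenfunctions as an eigenfunction to $\lambda_1^{\bphi}$.

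For the convergence in \eqref{lko}, the plan is to use a standard subsequence-of-subsequence argument. Take any subsequence of $(\bphi_k)_{k\in\N}$. Since it still converges weakly to $\bphi$ in $\HN$ and remains bounded in $\LuN$, Lemma~\ref{lus} and Lemma~\ref{los} apply and yield a further subsequence along which both $\liminf \lambda_1^{\bphi_k} \ge \ev$ and $\limsup \lambda_1^{\bphi_k} \le \ev$ hold, hence convergence to $\ev$ along this sub-subsequence. Because every subsequence contains a further subsequence with the same limit $\ev$, the whole sequence $(\lambda_1^{\bphi_k})_{k\in\N}$ must converge to $\ev$.

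For the second part, I would reuse the compactness argument already carried out in the proof of Lemma~\ref{lus}. The Courant--Fischer characterization from Theorem~\ref{EEW}(b), together with the uniform bounds on $\C(\bphi_k)$ and $\rho(\bphi_k)$ implied by the $\LuN$-bound on $\bphi_k$, shows that $(\bu^k)_{k\in\N}$ is bounded in $\Hd$. Banach--Alaoglu and the compact embedding $\Hd \hookrightarrow \Lzd$ produce a function $\oB{u}\in\Hd$ with $\bu^k \rightharpoonup \oB{u}$ in $\Hd$ and $\bu^k \to \oB{u}$ in $\Lzd$ along a non-relabeled subsequence. Using Lebesgue's theorem and the local Lipschitz continuity of $\rho$, the $\Lzrpa{\bphi_k}$-normalization passes to the limit and gives $\norm{\oB{u}}_{\Lzrp}=1$; in particular $\oB{u}\neq \B{0}$.

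The main remaining step is to verify that $\oB{u}$ solves the weak eigenvalue equation \eqref{WEstate} for $\lambda_1^{\bphi}$. Fix an arbitrary $\bet\in\Hd$. Starting from
\begin{align*}
    \bBET{\bu^k}{\bet}{\bphi_k}
    = \lambda_1^{\bphi_k}\,\brsp{\bu^k}{\bet}{\bphi_k},
\end{align*}
the right-hand side converges to $\ev \rsp{\oB{u}}{\bet}{\bphi}$ by the already established convergence $\lambda_1^{\bphi_k}\to\ev$ together with Lebesgue's theorem, the strong $\Lzd$-convergence of $\bu^k$ and the local Lipschitz continuity of $\rho$. For the left-hand side I would split
\begin{align*}
    \bBET{\bu^k}{\bet}{\bphi_k} - \bBE{\oB{u}}{\bet}
    = \big[\bBET{\bu^k}{\bet}{\bphi_k} - \bBE{\bu^k}{\bet}\big]
    + \big[\bBE{\bu^k}{\bet} - \bBE{\oB{u}}{\bet}\big].
\end{align*}
The first bracket tends to zero by the boundedness of $(\bu^k)_{k\in\N}$ in $\Hd$, Lebesgue's theorem, and the local Lipschitz continuity of $\C$, exactly as at the end of the proof of Lemma~\ref{lus}. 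The second bracket vanishes by the weak $\Hd$-convergence $\bu^k \rightharpoonup \oB{u}$ applied to the continuous linear functional $\bet\mapsto \BE{\,\cdot\,}{\bet}$. Hence $\oB{u}$ satisfies $\BE{\oB{u}}{\bet} = \ev\rsp{\oB{u}}{\bet}{\bphi}$ for all $\bet\in\Hd$, which, combined with $\oB{u}\neq\B 0$, identifies $\oB{u}$ as a $\Lzrp$-normalized eigenfunction to $\lambda_1^{\bphi}$. The strong convergence in $\Lzrp$ follows from the $\Lzd$-convergence together with the $\LuN$-boundedness of $\bphi_k$ and continuity of $\rho$. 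The principal obstacle is the simultaneous passage to the limit in the $\bphi_k$-dependent bilinear forms and scalar products, which is handled by the splitting above.
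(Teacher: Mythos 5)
Your proof is correct and follows essentially the same route as the paper: the subsequence-of-subsequence argument for the whole-sequence convergence of the eigenvalues, compactness of $(\bu^k)_{k\in\N}$ via Courant--Fischer and the compact embedding, and passage to the limit in the weak formulation by splitting the $\bphi_k$-dependent bilinear form exactly as at the end of the proof of Lemma~\ref{lus}. No gaps.
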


\begin{proof}
	The convergence $\lambda_1^{\bphi_k} \to \lambda_1^{\bphi}$ as $k\to\infty$ follows from Lemma~\ref{lus} and Lemma~\ref{los} after extraction of a subsequence. Moreover, as the limit $\lambda_1^{\bphi}$ does not depend on the choice of the subsequence, we conclude by a standard contradiction argument that the convergence remains true for the whole sequence.
	
	The convergence properties of $(\bu^k)_{k\in\N}\subset \Hd$ and the fact that the weak limit $\oB{u}\in \Hd$ is $\Lzrp$-normalized have already been established in $\eqref{wsco}$. Hence, it remains to show that $\oB{u}\in \Hd$ is an eigenfunction corresponding to the eigenvalue $\lambda_{1}^{\bphi}$.
	By construction, we know that for any $k\in \N$,
	\begin{align}\label{Ek}
    	   \bBET{\bu^k}{\bet}{\bphi_k}
    	&=
    	   \lambda^{\bphi_k}_1
    	   \brsp{\bu^k}{\bet}{\bphi_k},
	\end{align}
	for all test functions $\bet\in \Hd$. Using the convergence of eigenvalues $\eqref{lko}$ and proceeding as in the proof of Lemma~\ref{lus}, we infer that for any any $\bet\in \Hd$, %
	\begin{align*}
    	\bBET{\bu^k}{\bet}{\bphi_k}&\to \BE{\oB{u}}{\bet},\\
    	\brsq{\bu^k}{\bet}{\bphi_k}&\to \brsq{\oB{u}}{\bet}{\bphi},
	\end{align*}
	as $k\to \infty$, after extraction of a subsequence. Hence, we can pass to the limit in equation $\eqref{Ek}$ to obtain 
	\begin{align*}
	       \BE{\oB{u}}{\bet}
	   =
        	\lambda_1^{\bphi}
        	\brsp{\oB{u}}{\bet}{\bphi},
	\end{align*}
	which proves that $\oB{u}\in \Hd$ is indeed an eigenfunction corresponding to $\lambda_1^{\bphi}$.
\end{proof}


Corollary~\ref{Wst} now serves as initial case for the following inductive proof which yields convergence of all eigenvalues.

\begin{Thm}[Continuity properties for the eigenvalues and their eigenfunctions]\label{slw}
	Let $j\in \N$ be arbitrary and
	let $\left(\bphi_k\right)_{k\in\N}\subset \HN\cap \LuN$ be a bounded sequence with respect to the $\LuN$-norm satisfying 
	\begin{align*}
	   \bphi_k\rightharpoonup \bphi\quad \text{in $\HN$ as $k\to\infty$}.
	\end{align*} 
	Moreover, let $(\bu_j^k)_{k\in\N}\subset \Hd$ be a sequence of $\Lzrpa{\bphi_k}$-normalized eigenfunctions to the eigenvalues $(\lambda_{j}^{\bphi_k})_{k\in\N}$,
	i.e., $\bu_j^k$ satisfies $\eqref{WEstate}$ written for $\lambda_{j}^{\bphi_k}$ for every $k\in\N$.
	
	Then it holds that
	\begin{align*}
	   \lambda^{\bphi_k}_j\to \lambda^{\bphi}_j,\quad \text{as $k\to\infty$},
	\end{align*}
	for the \emph{whole} sequence of eigenvalues and not just a subsequence.
	
	\pagebreak[1]
	
	Furthermore, there exists a $\Lzrp$-normalized eigenfunction $\oB{u}_{j}\in \Hd$ to the eigenvalue $\lambda_j^\bphi$ such that
	\begin{align*}
    	   \bu^{k}_{j}\rightharpoonup\oB{u}_j\quad \text{in }\Hd,
    	\quad\text{and}\quad
    	   \bu^{k}_j\to \oB{u}_j\quad \text{in }\Lzrp
	\end{align*}
	as $k\to \infty$ along a non-relabeled subsequence.
\end{Thm}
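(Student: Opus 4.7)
The plan is to argue by induction on $j\in\N$, with the base case $j=1$ already supplied by Corollary~\ref{Wst}. Assuming the conclusion holds for all indices $1,\dots,n-1$ with $n\ge 2$, I would aim to prove it for $j=n$. My strategy is to establish the eigenvalue convergence $\lambda_n^{\bphi_k}\to\lambda_n^{\bphi}$ first, and only afterwards to obtain the eigenfunction convergence for the prescribed sequence $(\bu_n^k)_{k\in\N}$ by the same weak-limit argument that was used in Corollary~\ref{Wst}.

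For the upper semi-continuity $\limsup_k\lambda_n^{\bphi_k}\le\lambda_n^{\bphi}$, I would invoke the min-max counterpart of the Courant--Fischer formula from Theorem~\ref{EEW}\,(b), namely
\begin{align*}
    \lambda_n^{\bphi_k}
    \le
    \sup\left\{\left. \frac{\bBET{\bu}{\bu}{\bphi_k}}{\norm{\bu}^2_{\Lzrpa{\bphi_k}}} \,\right\vert\, \bu\in W\setminus\{\B 0\}\right\},
\end{align*}
applied to the fixed $n$-dimensional subspace $W=\langle\bw_1^{\bphi},\dots,\bw_n^{\bphi}\rangle_{\text{span}}$, where $\bw_1^{\bphi},\dots,\bw_n^{\bphi}$ are $\Lzrp$-orthonormal eigenfunctions to $\lambda_1^{\bphi},\dots,\lambda_n^{\bphi}$. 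Parameterising $\bu\in W$ by its coefficients, the two bilinear forms restricted to $W$ are represented by matrices converging entrywise to their $\bphi$-counterparts (use the compact embedding $\HN\hookrightarrow L^2$, a.e.\ convergence along a subsequence, dominated convergence, and the local Lipschitz continuity of $\C$ and $\rho$). Hence the Rayleigh quotient converges uniformly on the unit sphere of $W$, and its supremum tends to $\lambda_n^{\bphi}$.

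For the lower semi-continuity $\liminf_k\lambda_n^{\bphi_k}\ge\lambda_n^{\bphi}$ the induction hypothesis is crucial. I would use Theorem~\ref{EEW}\,(a) to pick an $\Lzrpa{\bphi_k}$-orthonormal family of eigenfunctions $\bv_1^k,\dots,\bv_n^k$ to $\lambda_1^{\bphi_k},\dots,\lambda_n^{\bphi_k}$. Applying the induction hypothesis to each of $\bv_1^k,\dots,\bv_{n-1}^k$, and using the boundedness of $\bv_n^k$ in $\Hd$ that follows from the upper bound above combined with \eqref{PC2} and Korn's inequality, a diagonal extraction produces a subsequence along which $\bv_j^k\rightharpoonup\oB v_j$ in $\Hd$ and $\bv_j^k\to\oB v_j$ in $\Lzrp$ for all $j\le n$. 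Orthonormality is preserved in the strong $\Lzrp$-limit, so $\{\oB v_1,\dots,\oB v_n\}$ forms an $\Lzrp$-orthonormal system of eigenfunctions (the limit $\oB v_n$ being identified as an eigenfunction for $\lambda_n^{\bphi}$ by the same passage-to-the-limit as in Corollary~\ref{Wst}, once the matching inequality below is established).

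The main obstacle, arising from the possible multiplicities of the eigenvalues, is to verify that the Rayleigh quotient $R(\oB v_n):=\bBE{\oB v_n}{\oB v_n}$ dominates $\lambda_n^{\bphi}$. Letting $p$ be the largest index with $\lambda_p^{\bphi}<\lambda_n^{\bphi}$ (set $p=0$ if no such index exists), each $\oB v_j$ with $j\le p$ lies in the $\lambda_j^{\bphi}$-eigenspace, and consequently the $\Lzrp$-orthonormal family $\oB v_1,\dots,\oB v_p$ spans exactly $\langle\bw_1^{\bphi},\dots,\bw_p^{\bphi}\rangle_{\text{span}}$. Orthogonality of $\oB v_n$ to this subspace forces the first $p$ coefficients in the eigenbasis expansion $\oB v_n=\sum_i c_i\bw_i^{\bphi}$ to vanish; since $\lambda_i^{\bphi}\ge\lambda_n^{\bphi}$ for all $i>p$, it follows that $R(\oB v_n)=\sum_{i>p}c_i^2\lambda_i^{\bphi}\ge\lambda_n^{\bphi}$. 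The matching bound $\liminf_k\lambda_n^{\bphi_k}\ge R(\oB v_n)$ is obtained verbatim as in Lemma~\ref{lus}, by testing the weak formulation for $\bv_n^k$ against the limit $\oB v_n$ and applying Cauchy--Schwarz together with the convergence of the $\bphi_k$-dependent inner product. A standard contradiction argument upgrades subsequential convergence to that of the whole sequence, and the convergence statement for the prescribed $(\bu_n^k)$ follows as in Corollary~\ref{Wst}: $\bBET{\bu_n^k}{\bu_n^k}{\bphi_k}=\lambda_n^{\bphi_k}$ together with \eqref{PC2} and Korn's inequality bounds $(\bu_n^k)$ in $\Hd$, a weakly convergent subsequence is extracted, and passing to the limit in the weak formulation identifies the weak limit $\oB u_n$ as a $\Lzrp$-normalised eigenfunction for $\lambda_n^{\bphi}$, closing the induction.
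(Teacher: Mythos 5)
Your induction is correct and the lower semicontinuity part follows the paper's argument closely, but the upper semicontinuity is handled by a genuinely different (and arguably slicker) route than the paper's.

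For $\limsup_k\lambda_n^{\bphi_k}\le\lambda_n^{\bphi}$, the paper picks the function $\B v\in W^{\perp,\Lzrp}\cap\Hd$ realizing $\lambda_n^{\bphi}$ from the max--min side, then builds the test function $\B v_k$ by subtracting off its components along the $\bphi_k$-eigenfunctions $\bw^{\bphi_k}_1,\dots,\bw^{\bphi_k}_{n-1}$, and proves the energy and norm of $\B v_k$ converge to those of $\B v$ (equations \eqref{epk}, \eqref{vpk}). You instead freeze the $n$-dimensional subspace $W=\langle\bw_1^{\bphi},\dots,\bw_n^{\bphi}\rangle_{\text{span}}$, represent both bilinear forms on $W$ by converging $n\times n$ matrices, and use the min--max estimate $\lambda_n^{\bphi_k}\le\max_{\B u\in W\setminus\{\B 0\}}R_k(\B u)$. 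This avoids the projection--convergence lemma entirely, and the matrix convergence gives the bound in one stroke. Two small technical remarks: Theorem~\ref{EEW}\,(b) only states the max--min form, so the min--max inequality you invoke should be derived from it (by counting dimensions: $\dim W=n>n-1=\dim V$ forces $W\cap V^{\perp,\Lzrpa{\bphi_k}}\ne\{\B 0\}$), and the a.e.\ convergence of $\bphi_k$ feeding the dominated-convergence step of the matrix entries is only along a subsequence, so the final upgrade to full-sequence convergence of the eigenvalues is genuinely needed, as you note.

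For $\liminf$, your definition of $p$ as the \emph{largest index with} $\lambda_p^{\bphi}<\lambda_n^{\bphi}$ is precisely the correct "break index" and actually cleans up a wording slip in the paper's definition of $j^*$ (the paper writes "$j^*<j-1$ is the maximal index such that $\lambda_{j^*}^{\bphi}<\lambda_{j-1}$", which should read $\lambda_{j^*}^{\bphi}<\lambda_j^{\bphi}$ and need not satisfy $j^*<j-1$). The rest of your argument --- orthonormality of the strong $\Lzrp$-limits, span identification $\langle\oB v_1,\dots,\oB v_p\rangle_{\text{span}}=\langle\bw_1^{\bphi},\dots,\bw_p^{\bphi}\rangle_{\text{span}}$, the spectral estimate $R(\oB v_n)\ge\lambda_n^{\bphi}$, and the Lemma~\ref{lus}-type matching bound --- is exactly the paper's reasoning. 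One wording nitpick: the parenthetical "(the limit $\oB v_n$ being identified as an eigenfunction \dots once the matching inequality below is established)" could be read as circular; for the $\liminf$ step you in fact do not need $\oB v_n$ to be an eigenfunction, only that it is $\Lzrp$-normalized and orthogonal to $\oB v_1,\dots,\oB v_p$ --- the eigenfunction identification is a consequence of the completed eigenvalue convergence, as you correctly execute at the very end.
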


\begin{proof}
	As mentioned before we proceed by induction. The initial step has already been established in Corollary~\ref{Wst}.
	
	Now, we assume that the statement is already verified for the index $\left(j-1\right)\in \mathbb{N}$. 
	Our task is to prove that the assertion is true for the $j$-th eigenvectors and the associated eigenfunctions. In this regard, the Courant--Fischer representation of Theorem~\ref{EEW}(b) will be a helpful tool.
	
	For $k\in \mathbb{N}$, we fix the $(j-1)$-dimensional subspace of $\Hd$ that realizes the maximum in the Courant--Fischer representation discussed in Theorem~\ref{EEW}(b), namely
	\begin{align*}
    	   V_k
    	\coloneqq 
    	   \langle 
    	       \B{w}^{\B{\varphi}_k}_1,\dots, \B{w}^{\B{\varphi}_k}_{j-1}
    	   \rangle_{\text{span}}.
	\end{align*}
	Analogously, we define
	\begin{align*}
    	   V
    	\coloneqq 
    	   \langle 
    	       \B{w}^{\B{\varphi}}_1,\dots, \B{w}^{\B{\varphi}}_{j-1}
    	   \rangle_{\text{span}}.
	\end{align*}
    Then by the induction hypothesis we know that for every $i=1,\dots,j-1$, there exists a $\Lzrp$-normalized eigenfunction $\oB{u}_i\in\Hd$ to the eigenvalue $\lambda_i^{\bphi}$ such that
    \begin{align}\label{indhyp}
    	   \B{w}^{\bphi_k}_{i}\rightharpoonup\oB{u}_i\quad \text{in }\Hd,
    	\quad\text{and}\quad
    	   \B{w}^{\bphi_k}_i\to \oB{u}_i\quad \text{in }\Lzrp,
    \end{align}
    as $k\to \infty$ along a non-relabeled subsequence. As $\left\{\B{w}^{\B{\varphi}_k}_1,\B{w}^{\B{\varphi}_k}_2,\dots \right\}\subset\Lzrpa{\bphi_k}$ form an orthonormal basis we infer that
    \begin{align}\label{uorth}
        \rsp{\oB{u}_m}{\oB{u}_l}{\bphi}=0,
    \end{align}
    for $m\neq l$, using the convergence properties of the sequence $\left(\bphi_k\right)_{k\in \mathbb{N}}$ along with Lebesgue's convergence theorem. In particular, the family $\left\{\oB{u}_1,\dots,\oB{u}_{j-1}\right\}\subset \Lzrp$ is linearly independent, which yields that all eigenfunctions to eigenvalues strictly smaller than $\lambda_j^{\B{\varphi}}$ are contained in its span $W\coloneqq \langle \oB{u}_1,\dots,\oB{u}_{j-1}\rangle_{\text{span}}$. Hence, we conclude that
    \begin{align}\label{CFj}
        \min\left\{
        	   \left.
        	       \frac{\BET{\B{u}}{\B{u}}{\B{\varphi}}}{\norm{\B{u}}_{\Lzrpa{\B{\varphi}}}^2}
        	   \right|
        	       \B{u}\in W^{\perp,\Lzrpa{\B{\varphi}}}\cap \Hd, \B{u}\neq \B{0}
        	   \right\}\ge \lambda_j^{\bphi}.
    \end{align}
    As the minimum is attained, we infer that we find a non-trivial function $\B{v}\in \Hd$ with
    \begin{align}\label{condv}
        \rsp{\B{v}}{\oB{u}_i}{\bphi}=0,
    \end{align}
    for all $i=1,\dots,j-1$ such that
    \begin{align}\label{cond2v}
        \frac{\BET{\B{v}}{\B{v}}{\B{\varphi}}}{\norm{\B{v}}_{\Lzrpa{\B{\varphi}}}^2}=\lambda_j^{\bphi}.
    \end{align}
    Otherwise the inequality in $\eqref{CFj}$ would be strict, which would be a contradiction to Theorem~\ref{EEW}.
	This means we have shown the existence of a function
	\begin{align}
	\label{ASS:V}
		  \B{v}\in W^{\perp,\Lzrp}\cap \Hd 
		\quad\text{with}\quad 
		  \B{v}\neq \B{0},
	\end{align}
    fulfilling $\eqref{cond2v}$.
    
    Let now the sequence $(\B v_k)_{k\in\N}$ be defined by
	\begin{align}\label{ffol}
    	   \B{v}_k
    	\coloneqq
    	   \B{v}
    	-\sum_{i=1}^{j-1}
    	   \rsq{\B{v}}{\B{w}^{\B{\varphi}_k}_i}{\B{\varphi}_k}\B{w}^{\B{\varphi}_k}_i.
	\end{align}
	for all $k\in\N$. By this construction, we immediately observe that 
	\begin{align*}
	   \B{v}_k\in V_k^{\perp,\Lzrpa{\B{\varphi}_k}}\cap \Hd.
	\end{align*}
	We now intend to show that the convergences
	\begin{align}
	\label{epk}
	       \BET{\B{v}_k}{\B{v}_k}{\B{\varphi}_k}
	   &\to \BET{\B{v}}{\B{v}}{\B{\varphi}}, \\
	       \label{vpk}
	       \norm{\B{v}_k}_{\Lzrpa{\B{\varphi}_k}}
	   &\to \norm{\B{v}}_{\Lzrp},
	\end{align}
	as $k\to \infty$, hold along a non-relabeled subsequence. 
	
	To verify \eqref{epk}, we consider the decomposition
	\begin{align}
    	\label{ID:1}
    	\begin{aligned}
    	       &\BET{\B{v}_k}{\B{v}_k}{\B{\varphi}_k}\\
    	   &\quad=
    	       \BET{\B{v}}{\B{v}}{\B{\varphi}_k}
    	    -
    	       2\BET{\B{v}}{\sum_{i=1}^{j-1}
    		  \rsq{\B{v}}{\B{w}^{\B{\varphi}_k}_i}{\B{\varphi}_k}
    		  \B{w}^{\B{\varphi}_k}_i}{\B{\varphi}_k}\\
    	   &\qquad+
    	       \BET{
    		      \sum_{i=1}^{j-1}
    		      \rsq{\B{v}}{\B{w}^{\B{\varphi}_k}_i}{\B{\varphi}_k}
    		      \B{w}^{\B{\varphi}_k}_i
                   	}
    	           {\sum_{m=1}^{j-1}
    		          \rsp{\B{v}}{\B{w}^{\B{\varphi}_k}_m}{\B{\varphi}_k}
    		          \B{w}^{\B{\varphi}_k}_m
    	           }
    	           {\B{\varphi}_k}.
    	\end{aligned}
	\end{align}
	For the first product on the right-hand side, we directly obtain the convergence
	\begin{align*}
	   \BET{\B{v}}{\B{v}}{\B{\varphi}_k}\to \BET{\B{v}}{\B{v}}{\B{\varphi}}, \quad\text{as $k\to\infty$},
	\end{align*}
	along a suitable subsequence.
	As the functions $\B w_i^{\bphi_k}$ are $\Lzrpa{\B{\varphi}_k}$-normalized eigenfunctions, we obtain from \eqref{WEstate} the following representation of the third product on the right-hand side of \eqref{ID:1}:
	\begin{align*}
    	&\BET{
    		      \sum_{i=1}^{j-1}
    		      \rsq{\B{v}}{\B{w}^{\B{\varphi}_k}_i}{\B{\varphi}_k}\B{w}^{\B{\varphi}_k}_i
    	       }
    	       {\sum_{m=1}^{j-1}
    		      \rsq{\B{v}}{\B{w}^{\B{\varphi}_k}_m}{\B{\varphi}_k}\B{w}^{\B{\varphi}_k}_m
    	       }      
    	       {\B{\varphi}_k}
    	\\
    	&\quad =
    	       \sum_{i=1}^{j-1}
    	       \rsq{\B{v}}{\B{w}^{\B{\varphi}_k}_i}{\B{\varphi}_k}^2\lambda^{\B{\varphi}_k}_i. 
	\end{align*}
	As the sum takes only the indices $i=1,\dots, j-1$ into account, we can again use the induction hypothesis to obtain 
	\begin{align*}
        	   \sum_{i=1}^{j-1}
        	       \rsq{\B{v}}{\B{w}^{\B{\varphi}_k}_i}{\B{\varphi}_k}^2\lambda^{\B{\varphi}_k}_i
    	   \to
            	\sum_{i=1}^{j-1}
            	   \rsq{\B{v}}{\oB{u}_i}{\B{\varphi}}^2\lambda^{\B{\varphi}}_i,
            	\quad\text{as $k\to\infty$},
	\end{align*}
	along a suitable subsequence.
	Hence, $\eqref{condv}$ directly yields that the third product on the right-hand side of \eqref{ID:1} converges to zero.
	The second product can be handled similarly, and we can also show that it tends to zero as $k\to\infty$. In summary, we get
	\begin{align*}
	   \BET{\B{v}_k}{\B{v}_k}{\B{\varphi}_k}\to \BET{\B{v}}{\B{v}}{\B{\varphi}}, \quad\text{as $k\to\infty$}.
	\end{align*}
	This proves \eqref{epk}.
	The claim \eqref{vpk} can easily be verified using the induction hypothesis. 
	
	In particular, since $\B{v}\neq \B{0}$, we obtain that $\B{v}_k\neq \B{0}$ for all $k\in \mathbb{N}$ sufficiently large. For such $k\in\N$, we obtain the estimate
	\begin{align*}
    	\lambda^{\B{\varphi}_k}_j
    	   &=\min
    	       \left\{
    	           \left.
    	               \frac{\BET{\B{u}}{\B{u}}{\B{\varphi}_k}}{\norm{\B{u}}_{\Lzrpa{\B{\varphi}_k}}^2}
    	           \right|
    	               \B{u}\in V_k^{\perp,\Lzrpa{\B{\varphi}_k}}\cap \Hd, \B{u}\neq \B{0}
    	       \right\}\\[2ex]
    	   &\le 
    	       \frac{\BET{\B{v}_k}{\B{v}_k}{\B{\varphi}_k}}{\norm{\B{v}_k}_{\Lzrpa{\B{\varphi}_k}}^2}\;.
	\end{align*}
	Using \eqref{epk} and \eqref{vpk}, we conclude from $\eqref{cond2v}$ that, along a non-relabeled subsequence,
	\begin{align}\label{lisu}
    	   \underset{k\to \infty}{\limsup\,}
    	       \lambda^{\B{\varphi}_k}_j
    	\le 
    	   \underset{k\to \infty}{\limsup\,}
    	       \frac{\BET{\B{v}_k}{\B{v}_k}{\B{\varphi}_k}}{\norm{\B{v}_k}_{\Lzrpa{\B{\varphi}_k}}^2}
    	= 
    	   \frac{\BET{\B{v}}{\B{v}}{\B{\varphi}}}{\norm{\B{v}}_{\Lzrpa{\B{\varphi}}}^2}
    	=
    	   \lambda^{\B{\varphi}}_j.
	\end{align}
	In particular, this implies that the subsequence $(\lambda^{\B{\varphi}_k}_j)_{k\in \mathbb{N}}$ is bounded. 
    
	Now, for $k\in\N$, let $\B{u}^{k}_j$ denote a $\Lzrpa{\B{\varphi}_k}$-normalized eigenfunction to the eigenvector $\lambda^{\B{\varphi}_k}_j$.
	Consequently, due to the Courant--Fischer characterization in Theorem~\ref{EEW}(b), the sequence $(\B{u}_j^k)_{k\in \mathbb{N}}\subset \Hd$ is bounded. Applying the Banach--Alaoglu theorem, we can thus extract a subsequence such that
	\begin{align}\label{wlk}
    	   \B{u}^{k}_j\rightharpoonup \oB{w} \quad\text{in }\Hd,
    	\quad\text{and}\quad
    	   \B{u}^{k}_j\to \oB{w} \quad\text{in }\Lzd,
	\end{align}
	as $k\to\infty$, where $\oB{w}\in \Hd$ is $\Lzrp$-normalized. However, it is a priori not necessarily an eigenfunction to the eigenvalue $\lambda_{j}^{\B{\varphi}_k}$, as the convergence of the corresponding eigenvalues is still unknown.
	
	Proceeding as in Subsection~\ref{WSC}, we want to show that
	\begin{align}
    \label{liin}
    	   \lambda^{\B{\varphi}}_j
    	\le 
    	   \underset{k\to \infty}{\lim\inf }
    	       \lambda^{\B{\varphi}_k}_j.
	\end{align}
	As in the proof of Corollary~\ref{Wst} in combination with \eqref{lisu}, we can then conclude the desired convergence $\lambda_j^{\B{\varphi}_k}\to \lambda_j^{\B{\varphi}_k}$ for the whole sequence as $k\to\infty$.
	
	To verify \eqref{liin}, we first observe that due to the orthogonality of eigenfunctions corresponding to different eigenvalues
	\begin{align*}
	   \brsq{\B{u}^{k}_j}{\B{w}^{\B{\varphi}_k}_m}{\B{\varphi}_k}=0,
	\end{align*}
	for all $k\in \mathbb{N}$ and for all $m=1,\dots, j^*$, where $j^*<j-1$ is the maximal index such that $\lambda_{j^*}^{\bphi}<\lambda_{j-1}$.
	Recalling the assumptions on $\left(\B{\varphi}_k\right)_{k\in \mathbb{N}}$, we can use $\eqref{wlk}$ and $\eqref{indhyp}$ to infer that
	\begin{align*}
		\brsq{\oB{w}}{\oB{u}_m}{\B{\varphi}}=0,
	\end{align*}
	for all $m=1,\dots, j^*$. As $j^*$ is chosen maximally, we know from the orthogonality \eqref{uorth} that
    \begin{align*}
            \langle \oB{u}_1,\dots,\oB{u}_{j^*}\rangle_{\text{span}}
        =
            \langle \B{w}_1^{\bphi},\dots,\B{w}_{j^*}^{\bphi}\rangle_{\text{span}}\subset \Lzrp.
    \end{align*}
	This leads to the representation
	\begin{align*}
    	   \oB{w}
    	=
    	   \sum_{m=1}^{\infty}
    	       \rsp{\oB{w}}{\B{w}^{\B{\varphi}}_m}{\B{\varphi}}\B{w}^{\B{\varphi}}_m
    	=
    	   \sum_{m=j^*}^{\infty}
    	       \rsp{\oB{w}}{\B{w}^{\B{\varphi}}_m}{\B{\varphi}}\B{w}^{\B{\varphi}}_m.
	\end{align*}
	As the series converges in $\Hd$, we can use \eqref{WEstate} to obtain
	\begin{align*}
    	   \BE{\oB{w}}{\oB{w}}
    	=
    	   \sum_{m=j^*}^{\infty}\lambda^{\B{\varphi}}_m
    	   \rsp{\oB{w}}{\B{w}^{\B{\varphi}}_m}{\B{\varphi}}^2
    	\ge 
    	   \lambda^{\B{\varphi}}_j,
	\end{align*}
    as $\oB{w}\in \Hd$ is $\Lzrp$-normalized.
	Hence, it only remains to show that
	\begin{align*}
    	   \underset{k\to\infty}{\liminf\,}
    	       \lambda^{\B{\varphi}_k}_j
	   \ge 
	       \BE{\oB{w}}{\oB{w}}.
	\end{align*}
	This, however, can be proven completely analogously as in the proof of Lemma~\ref{lus}.
	
	In summary, we obtain the convergence
	\begin{align*}
	   \lambda_j^{\B{\varphi}_k}\to \lambda_j^{\B{\varphi}}, \quad \text{as $k\to\infty$},
	\end{align*}
	along a non-relabeled subsequence. Since the limit does not depend on any subsequence extraction, this convergence holds true for the whole sequence.
	As in Corollary~\ref{Wst}, we conclude that $\oB{u}_j:=\oB{w}\in \Hd$ is an eigenfunction to the eigenvalue $\lambda_j^\bphi$. In view of \eqref{wlk}, this completes the proof.
\end{proof}

\subsection{Local Lipschitz continuity of the eigenvalues}

The following lemma shows that all eigenvalues are locally Lipschitz continuous with respect to $\bphi$.

\begin{Lem}[Local Lipschitz continuity of the eigenvalues]\label{llip}
	Let $i\in \N$ be any index and let $\bphi\in \HL$ be arbitrary.
	Then there exist $\delta_i^\bphi,\,C^i_{\bphi}>0$ such that
	\begin{align*}
        	\left|
        	   \lambda^{\bphi}_i-\lambda^{\bphi+\bh}_i
        	\right|
    	\le 
    	   C^i_{\bphi}\norm{\bh}_{\HN\cap\LuN},
	\end{align*}
	for all $\bh\in \HN\cap \LuN$ with $\norm{\B h}_{\HL}<\delta_i^\bphi$.
	This means that the mapping
	\begin{align*}
	   \lambda_i: \HN\cap\LuN \to \R_{>0},\quad
	   \bphi\mapsto \lambda_i^\bphi,
	\end{align*}
	is locally Lipschitz continuous.
\end{Lem}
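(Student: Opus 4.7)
The strategy is to estimate the difference of eigenvalues via the variational characterization from Theorem~\ref{EEW}(b). It is convenient to work with the equivalent min--max formulation
\begin{align*}
\lambda_i^\bphi=\min_{V\in\mathcal S_i}\;\max_{\B 0\neq \bu\in V}\;\frac{\BET{\bu}{\bu}{\bphi}}{\norm{\bu}_{\Lzrpa{\bphi}}^2},
\end{align*}
where $\mathcal S_i$ denotes the family of $i$-dimensional subspaces of $\Hd$, with the minimum attained at $V^\star:=\langle\bw_1^\bphi,\dots,\bw_i^\bphi\rangle_{\mathrm{span}}$. Since $\C$ and $\rho$ are of class $C^{1,1}_{\mathrm{loc}}$, for $\bh\in\HL$ with $\norm{\bh}_{\LuN}$ sufficiently small the values $\bphi+\bh$ remain in a fixed compact subset of $\R^N$. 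From the local bounds \eqref{PC1} and \eqref{rhabs} together with the mean value theorem, one obtains constants $L_1,L_2$ depending only on $\bphi$ such that for every $\bu\in\Hd$,
\begin{align*}
\abs{\BET{\bu}{\bu}{\bphi+\bh}-\BE{\bu}{\bu}}&\le L_1\norm{\bh}_{\LuN}\norm{\bu}_{H^1(\Omega;\R^d)}^2,\\
\abs{\norm{\bu}_{\Lzrpa{\bphi+\bh}}^2-\norm{\bu}_{\Lzrp}^2}&\le L_2\norm{\bh}_{\LuN}\norm{\bu}_{\Lzd}^2.
\end{align*}

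To apply these perturbation bounds one needs to control the ratio of the $H^1$- and $\Lzrp$-norms on the chosen test subspace. For $\bu\in V^\star$, writing $\bu=\sum_{j=1}^i c_j\bw_j^\bphi$ and exploiting the $\Lzrp$-orthonormality of the eigenfunctions yields $\BE{\bu}{\bu}=\sum_{j=1}^i c_j^2\lambda_j^\bphi\le\lambda_i^\bphi\norm{\bu}_{\Lzrp}^2$; combining this with Korn's inequality on $\Hd$ and the ellipticity \eqref{PC2}, one deduces $\norm{\bu}_{H^1(\Omega;\R^d)}^2\le M_i^\bphi\norm{\bu}_{\Lzrp}^2$ for a constant $M_i^\bphi$ depending only on $i$ and $\bphi$. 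Plugging $V^\star$ as a test subspace into the min--max for $\lambda_i^{\bphi+\bh}$ and invoking the two perturbation estimates together with this subspace bound, one obtains for $\norm{\bh}_{\LuN}$ sufficiently small that
\begin{align*}
\lambda_i^{\bphi+\bh}\le\frac{\lambda_i^\bphi+M_i^\bphi L_1\norm{\bh}_{\LuN}}{1-L_2\norm{\bh}_{\LuN}}\le\lambda_i^\bphi+C_i^\bphi\norm{\bh}_{\HL}.
\end{align*}

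The matching lower bound is obtained by running the same argument with the roles of $\bphi$ and $\bphi+\bh$ interchanged, using the test subspace $V^{\star\star}:=\langle\bw_1^{\bphi+\bh},\dots,\bw_i^{\bphi+\bh}\rangle_{\mathrm{span}}$. The main subtle point, and essentially the only obstacle, is that the analogue of the subspace estimate on $V^{\star\star}$ involves a constant proportional to $\lambda_i^{\bphi+\bh}$, so that a Lipschitz constant independent of $\bh$ requires a \emph{uniform} upper bound on $\lambda_i^{\bphi+\bh}$. This uniform bound is however already delivered by the previous step, which yields $\lambda_i^{\bphi+\bh}\le\lambda_i^\bphi+1$ for $\norm{\bh}_{\HL}$ small enough, so that the constant on $V^{\star\star}$ can be replaced by one depending only on $\bphi$ and $i$. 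Repeating the computation of the previous paragraph then produces $\lambda_i^\bphi\le\lambda_i^{\bphi+\bh}+\widetilde C_i^\bphi\norm{\bh}_{\HL}$, and combining the two inequalities proves the claim.
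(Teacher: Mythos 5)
Your proof is correct, but it takes a genuinely different route from the paper. The paper pairs the two weak formulations against each other to get the bound
\begin{math}
\bigl|(\lambda_i^{\bphi}-\lambda_i^{\bphi+\bh})\,\brsp{\bw_i^{\bphi+\bh}}{\bw_i}{\varphi}\bigr|\le C_\bphi^i\norm{\bh}_{\LuN},
\end{math}
and then must show that the pairing $\brsp{\bw_i^{\bphi+\bh}}{\bw_i}{\varphi}$ stays bounded away from zero; this is done by a contradiction argument invoking the strong $\Lzrp$-convergence of eigenfunctions from Theorem~\ref{slw}. You instead perturb the Rayleigh quotient directly in the Courant--Fischer characterization, using the eigenspaces of $\bphi$ and of $\bphi+\bh$ as explicit test subspaces, and you correctly identify and resolve the one genuine subtlety of that route (the constant for $V^{\star\star}$ involves $\lambda_i^{\bphi+\bh}$, which must first be bounded uniformly via the one-sided estimate). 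Your approach buys a fully constructive Lipschitz constant and avoids any reliance on Theorem~\ref{slw} or a compactness/contradiction step; the cost is that you use the min--max (rather than the max--min) form of Courant--Fischer, which is equivalent but not the version literally stated in Theorem~\ref{EEW}(b), so a one-line dimension-counting remark establishing $\lambda_i^\bphi=\min_{V\in\mathcal S_i}\max_{\B 0\neq\bu\in V}R(\bu)$ would make the argument fully self-contained relative to the paper. The paper's proof, by contrast, is shorter given the continuity machinery already in place. One harmless cosmetic point: your estimates are actually in terms of $\norm{\bh}_{\LuN}$ alone, which is stronger than the claimed bound in $\norm{\bh}_{\HN\cap\LuN}$ (the paper's first display has the same feature).
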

\begin{proof}
	Let $\bw_i\in \Hd$ denote a $\Lzrp$-normalized eigenfunction to the eigenvalue $\lambda^{\bphi}_i$. In the same fashion, let $\B{w}^{\bphi+\bh}_i\in \Hd$ denote a $\Lzrpa{\bphi+\bh}$-normalized eigenfunction to the eigenvalue $\lambda^{\bphi+\bh}_i$. Then, if $\delta_i^\bphi$ is sufficiently small, we obtain the estimate
	\begin{align*}
        	\left|
        	   \big(
        	       \lambda^{\bphi}_i-\lambda^{\bphi+\bh}_i
        	   \big)
        	   \brsp{\B{w}^{\bphi+\bh}_i}{\bw_i}{\varphi}
        	\right|
    	&\le
        	\left|
        	       \lambda_i^{\bphi}
        	       \brsp{\B{w}^{\bphi+\bh}_i}{\bw_i}{\varphi}
        	   -
        	       \lambda^{\bphi+\bh}_i
        	       \brsq{\B{w}^{\bphi+\bh}_i}{\bw_i}{\bphi+\bh}
        	\right|\\
    	&\quad
        	+
        	   \left|
            	       \lambda^{\bphi+\bh}_i
            	       \brsq{\B{w}^{\bphi+\bh}_i}{\bw_i}{\bphi+\bh}
        	       -
        	            \lambda^{\bphi+\bh}_i
        	             \brsq{\B{w}^{\bphi+\bh}_i}{\bw_i}{\bphi}
        	   \right|\\[1ex]
    	&=
        	\left|
            	\langle
            	   \mathcal{E}\big(\B{w}^{\bphi+\bh}_i\big),\mathcal{E}\big(\bw_i\big)
            	\rangle_{\C(\bphi)-\C(\bphi+\bh)}
        	\right|\\
    	&\quad
        	+
            	\left|
            	   \lambda^{\bphi+\bh}_i
            	   \Big(
            	           \brsq{\B{w}^{\bphi+\bh}_i}{\bw_i}{\bphi+\bh}
        	           -	
        	               \brsp{\B{w}^{\bphi+\bh}_i}{\bw_i}{\bphi}
        	       \Big)
        	   \right|\\[1ex]
    	&\le 
        	C^i_{\bphi}
        	\norm{\bh}_{\LuN},
	\end{align*}
	where the last inequality holds due to the local Lipschitz continuity of $\C$ and $\rho$, and the boundedness of $\lambda_i^{\bphi+\bh}$ which follows from Theorem~\ref{slw}. Note that the constant $C_i^{\bphi}$ may depend on $\lambda_i^{\bphi}$ but not on the eigenfunctions we have chosen, as they were assumed to be normalized.
	
	Suppose now that there exists a zero sequence $\left(\bh_k\right)_{k\in \N}\subset \HL$ such that
	\begin{align*}
        	\left|
        	   \lambda^{\bphi}_i-\lambda^{\bphi+\bh_k}_i
        	\right|
    	>
    	   k\norm{\bh_k}_{\HL},
	\end{align*}
	as $k\to \infty$.
	For the corresponding sequence of eigenfunctions $(\bw^{\bphi+\bh_k}_i)_{k\in \N}\subset \Hd$ for the eigenvalues $(\lambda_i^{\bphi+\bh_k})_{k\in\N}$, we know from Theorem~\ref{slw} that we find a $\Lzrp$-normalized eigenfunction $\oB{w}$ to the eigenvalue $\lambda_i^{\bphi}$ such that
	\begin{align*}
	   \bw^{\bphi+\bh_k}_i\to \oB{w}\quad \text{ in } \Lzrp,
	\end{align*}
	as $k\to \infty$, up to subsequence extraction.
	In particular, for $k$ sufficiently large, we know that the members of this subsequence satisfy
	\begin{align*}
	   \brsp{\bw_i^{\bphi+\bh_k}}{\oB{w}}{\varphi}>\frac{1}{2}
	\end{align*}
	and thus, 
	\begin{align*}
	   k\norm{\bh_k}_{\HL}<2C_{\bphi}^{i}\norm{\bh_k}_{\HL},
	\end{align*}
	which is an obvious contradiction. This proves the claim.
\end{proof}

\subsection{A sign convention for the eigenfunctions}

In the previous analysis there was no need to assume that the eigenspaces are one-dimensional. However, in Section~5, we want to show that the eigenvalues are Fr\'echet differentiable with respect to the phase-field. Therefore, it will be necessary to assume that for fixed $\bphi\in \HL$ the eigenspace corresponding to the considered eigenvalue $\lambda_i^{\bphi}$ is one-dimensional. In this case the eigenvalue is called \emph{simple}.

Simplicity of $\lambda_i^{\bphi}$ allows us to choose a corresponding eigenfunction $\bw_i^{\bphi}\in \Hd$ that is normalized with respect to the scalar product on $\Lzrp$ and \emph{unique} up to multiplication by $\pm 1$.
We call such an eigenfunction a \emph{representative} corresponding to $\lambda_i^{\bphi}$. 

In general, any eigenspace could be higher dimensional. For numerically motivated examples showing that even the simplicity of the first eigenvalue of a scalar elliptic regular PDE is no longer fulfilled in the vector valued case, see \cite{Beaudouin}.
However, in concrete applications, there are physical and numerical justifications for assuming simple eigenvalues.
This is due to the fact that nature as well as numerical simulations on computers lead to perturbations of the non-generic case of equal eigenvalues.

As a classical two dimensional example to illustrate this behavior, an eigenvalue problem associated with the Laplacian subject to Dirichlet boundary conditions can be considered.
If the domain is a perfect circle, eigenvalues with higher multiplicity will occur. 
However, as soon as the perfect circular shape of the domain is perturbated by small imperfections, these eigenvalues will become different and simple.
For more details see \cite{Barbarosie}.

In the following lemma, we will introduce a condition to fix a sequence of representatives whose elements $\bw_{i}^{\bphi_k}$ are uniquely determined if $\bphi_k\in \HL$ is sufficiently close to $\bphi$. 
In particular, we see that it is possible to deduce simplicity of the eigenvalues $\lambda^{\bphi_k}_i$ in a suitable neighborhood of $\lambda_i^{\bphi}$.

\begin{Lem} \label{LEM:SIGN}
	Let $i\in \N$ and $\left(\bphi_k\right)_{k\in \N}\subset \HL$ be a sequence such that
	\begin{align*}
	   \bphi_k\to \bphi \quad\text{in }\HL,
	\end{align*}
	for $k\to \infty$. Moreover, we assume that $\lambda^{\bphi}_i$ is a simple eigenvalue of \eqref{WEstate} and let $\bw_i^{\bphi}$ be a corresponding $\Lzrp$-normalized eigenfunction.
	
	Then for any $\eps\in (0,1)$, we can find a $K^{\eps}_i>0$ such that for any $k>K_i^\eps$, there exists a unique $\Lzrpa{\bphi_k}$-normalized eigenfunction $\bw_i^{\bphi_k}\in \Hd$ to the eigenvalue $\lambda_{i}^{\bphi_k}$ satisfying 
	\begin{align}\label{kombe}
	   \rsp{\bw_i^{\bphi_k}}{\bw_i^{\varphi}}{\bphi}>\eps.
	\end{align} 
	In particular, the eigenvalues $\lambda_i^{\bphi_k}$ with $k>K_i^\eps$ are simple.
\end{Lem}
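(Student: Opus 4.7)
My plan is to use the continuity of eigenvalues from Theorem~\ref{slw} in two ways: first, to deduce simplicity of $\lambda_i^{\bphi_k}$ for $k$ sufficiently large, and second, after fixing the sign ambiguity of an $\Lzrpa{\bphi_k}$-normalized eigenfunction by comparing it to $\bw_i^\bphi$, to upgrade subsequence convergence to full-sequence convergence and extract the quantitative bound \eqref{kombe} together with the claimed uniqueness.

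Step one. Since the eigenvalues are ordered with multiplicity, simplicity of $\lambda_i^{\bphi}$ is equivalent to the strict inequalities $\lambda_{i-1}^{\bphi}<\lambda_{i}^{\bphi}<\lambda_{i+1}^{\bphi}$ (dropping the left-hand condition if $i=1$). Strong convergence $\bphi_k\to\bphi$ in $\HL$ implies weak convergence in $\HN$ together with a uniform $\LuN$-bound, hence Theorem~\ref{slw} yields $\lambda_j^{\bphi_k}\to\lambda_j^{\bphi}$ for each $j\in\{i-1,i,i+1\}$. The strict inequalities are therefore preserved for $k$ large, so $\lambda_i^{\bphi_k}$ appears exactly once in the ordered eigenvalue list of \eqref{WEstate} and is thus simple.

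Step two. For each such $k$, pick any $\Lzrpa{\bphi_k}$-normalized eigenfunction $\bu^k\in\Hd$ associated with $\lambda_i^{\bphi_k}$; by simplicity, the only normalized choices are $\pm\bu^k$. Define $\bw_i^{\bphi_k}\coloneqq \bu^k$ if $\rsp{\bu^k}{\bw_i^{\bphi}}{\bphi}\ge 0$, and $\bw_i^{\bphi_k}\coloneqq -\bu^k$ otherwise, so that $\rsp{\bw_i^{\bphi_k}}{\bw_i^{\bphi}}{\bphi}\ge 0$ by construction. Applying Theorem~\ref{slw} to $(\bw_i^{\bphi_k})$, a non-relabeled subsequence converges in $\Lzrp$ to some $\Lzrp$-normalized eigenfunction $\oB{u}$ for $\lambda_i^{\bphi}$. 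Simplicity of $\lambda_i^{\bphi}$ forces $\oB{u}=\pm\bw_i^{\bphi}$, and the non-negative sign convention, passed to the limit via Cauchy--Schwarz and $\rho(\bphi)\in\LuN$, singles out $\oB{u}=\bw_i^{\bphi}$.

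Step three. Because the limit $\bw_i^{\bphi}$ is the same regardless of the extracted subsequence, a standard contradiction argument upgrades the subsequential convergence to convergence of the full sequence $\bw_i^{\bphi_k}\to \bw_i^{\bphi}$ in $\Lzrp$. Consequently, $\rsp{\bw_i^{\bphi_k}}{\bw_i^{\bphi}}{\bphi}\to \norm{\bw_i^{\bphi}}_{\Lzrp}^2=1$, and for every $\eps\in(0,1)$ there exists $K_i^{\eps}\in\N$ so that \eqref{kombe} holds whenever $k>K_i^{\eps}$; the alternative choice $-\bw_i^{\bphi_k}$ then violates \eqref{kombe} since its inner product with $\bw_i^{\bphi}$ is $<-\eps<\eps$, which yields uniqueness. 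The main obstacle is precisely the passage from the subsequential convergence provided by Theorem~\ref{slw} to full-sequence convergence, which works here only because simplicity of $\lambda_i^{\bphi}$ restricts the limit set to $\{\pm\bw_i^{\bphi}\}$ and the sign convention removes the remaining sign ambiguity; the potential degenerate case $\rsp{\bu^k}{\bw_i^{\bphi}}{\bphi}=0$ is harmless because, by the convergence to $\pm 1$, it occurs for at most finitely many $k$.
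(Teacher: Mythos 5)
Your proof is correct and follows essentially the same route as the paper: both rest on Theorem~\ref{slw} together with the observation that simplicity of $\lambda_i^{\bphi}$ forces any subsequential limit of normalized eigenfunctions to be $\pm\bw_i^{\bphi}$. The only differences are organizational: the paper establishes \eqref{kombe} by contradiction (extracting a subsequence with $\abs{(\bw_i^{\bphi_k},\bw_i^{\bphi})_{\rho(\bphi)}}\le\eps$ and deriving $1<1$), whereas you argue directly by fixing the sign first and upgrading to full-sequence convergence, and you spell out the simplicity of $\lambda_i^{\bphi_k}$ for large $k$ via preservation of the strict ordering $\lambda_{i-1}^{\bphi_k}<\lambda_i^{\bphi_k}<\lambda_{i+1}^{\bphi_k}$, a step the paper only asserts.
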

\begin{proof}
	Note that we did not make any assumptions on the simplicity of the eigenspaces corresponding to $\lambda_{i}^{\bphi_k}$ for $k\in \N$. However, this can be established if $\bphi_k$ is close to $\bphi$ by invoking the simplicity of the eigenspace corresponding to $\lambda_i^{\bphi}$ and using the continuity properties known from Theorem~\ref{slw}.
		
	In the following, we will assume, without loss of generality, that $k$ is large enough to ensure that all eigenspaces to the eigenvalues $\lambda_{i}^{\bphi_k}$ are simple.
	If we are now able to find a sequence of representatives $\bw_i^{\bphi_k}$ that fulfills $\eqref{kombe}$ for a suitable $K^{\eps}_i\in \N$, then the uniqueness assertion is clear since the eigenfunctions are normalized and their sign is fixed by \eqref{kombe}.
	
	To prove the existence of such a sequence, we argue once more by contradiction. Let $\eps\in(0,1)$ be arbitrary and let us assume that there is no $K^{\eps}_i\in \N$ such that $\eqref{kombe}$ is fulfilled. Hence, after possibly swapping some of the signs, we can extract a subsequence such that 
	\begin{align}\label{contra}
        	\abs{
        		\rsq{\bw^{\bphi_k}_i}{\bw^{\bphi}_i}{\bphi}
        	       }
    	\le 
    	   \eps<1,
    	\quad\text{for all $k\in\N$}.
	\end{align}
	Using Theorem~\ref{slw} we obtain a weak limit $\oB{w}$ of a non-relabeled subsequence of $\left(\bw^{\bphi_k}_i\right)_{k\in \mathbb{N}}$ and infer from the simplicity of $\lambda_i^{\bphi}$ that $\oB{w}=\pm\bw_i^{\bphi}$. Hence, using \eqref{contra}, we obtain 
	\begin{align*}
    	   1
    	=
    	   \rsq{\bw^{\bphi}_i}{\bw^{\bphi}_i}{\bphi}
    	=
    	   \pm\rsq{\oB{w}}{\bw^{\bphi}_i}{\bphi}
    	<1,
	\end{align*}
	which is obviously a contradiction.
	
	Eventually, this means that condition $\eqref{kombe}$ allows us to pick a \emph{unique} representative $\bw_i^{\bphi_k} $ for every $k\in \N$ sufficiently large such that the obtained sequence fulfills
	\begin{align*}
	   \bw^{\bphi_k}_i\rightharpoonup \bw^{\bphi}_i \quad\text{in }\Hd,
	\end{align*}
	as $k\to \infty$, up to subsequence extraction.
\end{proof}

\medskip

The following corollary is a direct consequence of Lemma~\ref{LEM:SIGN}.

\begin{Cor} \label{COR:SIGN}
	For $i\in \N$ and $\bphi\in \HL$, we suppose that the eigenvalue $\lambda_i^\bphi$ is simple.
	Let $\bw_i^\bphi$ be a $L^2_{\bphi}(\Omega;\R^d)$-normalized eigenfunction to the eigenvalue $\lambda_i^\bphi$.
	
	Then, for all $\eps>0$, there exists $\delta>0$ such that for all
	\begin{align*}
	   \bh\in \LuN\cap \HN \quad\text{with}\quad \norm{\bh}_{\HN\cap \LuN}<\delta
	\end{align*}
	there exists a unique $L^2_{\bphi+\bh}(\Omega;\R^d)$-normalized eigenfunction $\bw_i^{\bphi+\bh}$ to the eigenvalue $\lambda_i^{\bphi+\bh}$ satisfying the condition 
	\begin{align}
	 \label{SC}
	   \brsp{\bw^{\bphi+\bh}_i}{\bw^\bphi_i}{\varphi}>\eps>0.
	\end{align}
	In particular, the eigenvalues $\lambda_i^{\bphi+\bh}$ are simple.
\end{Cor}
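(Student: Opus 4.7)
The plan is to deduce Corollary~\ref{COR:SIGN} from Lemma~\ref{LEM:SIGN} via a standard translation between the sequential and $\eps$--$\delta$ formulations of continuity, together with an argument by contradiction. Observe first that the space $\HL$ appearing in Lemma~\ref{LEM:SIGN} coincides with $\HN\cap\LuN$ (equipped with the same norm) that is used in the corollary, so no further identification is needed.

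I would fix $\eps>0$ and suppose, for contradiction, that no $\delta>0$ as claimed exists. Then I can select a sequence $(\bh_k)_{k\in \N}\subset \HN\cap\LuN$ with $\norm{\bh_k}_{\HN\cap\LuN}\to 0$ as $k\to\infty$ such that for each $k$ either there is no $\Lzrpa{\bphi+\bh_k}$-normalized eigenfunction $\bw$ to $\lambda_i^{\bphi+\bh_k}$ with $\brsp{\bw}{\bw_i^{\bphi}}{\varphi}>\eps$, or such an eigenfunction is not unique (equivalently, $\lambda_i^{\bphi+\bh_k}$ fails to be simple). Setting $\bphi_k\coloneqq \bphi+\bh_k$, the convergence $\bh_k\to \B{0}$ in $\HL$ gives $\bphi_k\to \bphi$ in $\HL$, so Lemma~\ref{LEM:SIGN} applies and yields an index $K_i^{\eps}$ such that for all $k>K_i^{\eps}$ there is a unique $\Lzrpa{\bphi_k}$-normalized eigenfunction $\bw_i^{\bphi_k}$ to the (then necessarily simple) eigenvalue $\lambda_i^{\bphi_k}$ with $\brsp{\bw_i^{\bphi_k}}{\bw_i^{\bphi}}{\varphi}>\eps$. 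This contradicts the defining property of the sequence $(\bh_k)_{k\in\N}$, and the existence of $\delta>0$ follows.

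There is really no hard step here; the work has already been done in Lemma~\ref{LEM:SIGN}. The only point that deserves a brief remark is the simplicity assertion for $\lambda_i^{\bphi+\bh}$: this is not an extra claim but a byproduct of Lemma~\ref{LEM:SIGN}, which explicitly establishes simplicity of $\lambda_i^{\bphi_k}$ for sufficiently large $k$. Consequently the uniqueness of the representative $\bw_i^{\bphi+\bh}$ satisfying \eqref{SC} is automatic once simplicity is known, since normalization together with the sign constraint fixes the remaining $\pm 1$ ambiguity.
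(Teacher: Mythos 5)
Your proposal is correct and matches the paper's intent exactly: the paper offers no separate proof, stating only that the corollary is a direct consequence of Lemma~\ref{LEM:SIGN}, and your contradiction argument is the standard translation from the sequential statement to the $\eps$--$\delta$ form. The only cosmetic caveat is that Lemma~\ref{LEM:SIGN} is stated for $\eps\in(0,1)$ while the corollary says ``for all $\eps>0$''; since the sign condition cannot hold for $\eps\ge 1$ in the limit, the corollary is implicitly restricted to $\eps\in(0,1)$, which your application of the lemma tacitly assumes.
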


This means that, if $\bh$ is sufficiently small, the signs of the eigenfunctions $\bw^{\bphi+\bh}_i$ can be uniquely fixed in accordance with the sign of $\bw_i^\bphi$ by the sign condition \eqref{SC}.

\subsection{Continuity of the eigenfunctions}

In view of the sign convention from Corollary~\ref{COR:SIGN}, we can now prove the following continuity result.

\begin{Lem}[Continuity of eigenfunctions to simple eigenvalues]\label{dwko}
	Let $\bphi\in \HL$ be arbitrary and let $\bw_i^\bphi$ denote a $\Lzrp$-normalized eigenfunction to the eigenvalue $\lambda_i^\bphi$ which is assumed to be simple. 
	For any $\eps>0$, we assume that $\delta>0$, $\bh$ and the eigenfunctions $\bw_i^{\bphi+\bh}$ to the eigenvalues $\lambda_i^{\bphi+\bh}$ are all chosen in such a way that the sign condition \eqref{SC} is satisfied.
	
	Then the eigenfunctions $\bw_i^{\bphi+\bh}$ are uniquely determined and it holds that
	\begin{align}\label{Lkonv}
	   \big\|\bw^{\bphi+\B h}_i-\bw^\bphi_i \big\|_{\Hd}\to 0,
	\end{align} 
	as $\bh\to \B{0}$ in $\HN\cap\LuN$. This means that the mapping
	\begin{align*}
	   \bw_i : \HL \to \Hd,\quad \bphi \mapsto \bw^\bphi_i
	\end{align*}
	is (strongly sequentially) continuous with respect to the norm on $\Hd$.
\end{Lem}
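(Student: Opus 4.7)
The uniqueness of $\bw_i^{\bphi+\bh}$ under the sign condition \eqref{SC} is an immediate consequence of Corollary~\ref{COR:SIGN}, so only the strong convergence \eqref{Lkonv} requires proof. My plan is to argue by contradiction: assume there exists a sequence $(\bh_k)_{k\in\N} \subset \HN \cap \LuN$ with $\bh_k \to \B{0}$ and a constant $c>0$ such that $\big\|\bw_i^{\bphi+\bh_k} - \bw_i^\bphi\big\|_{\Hd} \ge c$ for all $k$. Writing $\bphi_k := \bphi + \bh_k$, Corollary~\ref{COR:SIGN} ensures that for all sufficiently large $k$ the eigenvalue $\lambda_i^{\bphi_k}$ is simple and $\bw_i^{\bphi_k}$ is uniquely fixed by \eqref{SC}.

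Since $\bphi_k \to \bphi$ in $\HN \cap \LuN$, Theorem~\ref{slw} is applicable and yields $\lambda_i^{\bphi_k} \to \lambda_i^\bphi$ together with the existence of a $\Lzrp$-normalized eigenfunction $\oB{u}_i$ to $\lambda_i^\bphi$ such that $\bw_i^{\bphi_k} \rightharpoonup \oB{u}_i$ in $\Hd$ and $\bw_i^{\bphi_k} \to \oB{u}_i$ in $\Lzrp$ along a non-relabeled subsequence. By simplicity of $\lambda_i^\bphi$ we must have $\oB{u}_i = \pm\, \bw_i^\bphi$, and passing to the limit in the sign condition $\rsp{\bw_i^{\bphi_k}}{\bw_i^\bphi}{\varphi} > \eps > 0$ forces $\oB{u}_i = \bw_i^\bphi$.

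The remaining task, which is the main obstacle, is to upgrade this weak convergence in $\Hd$ to strong convergence. As $\BE{\cdot}{\cdot}$ induces a norm on $\Hd$ equivalent to the standard one, it suffices to establish the norm convergence $\BE{\bw_i^{\bphi_k}}{\bw_i^{\bphi_k}} \to \BE{\bw_i^\bphi}{\bw_i^\bphi}$. Testing \eqref{WEstate} written at $\bphi_k$ with $\bet = \bw_i^{\bphi_k}$ and invoking the $\Lzrpa{\bphi_k}$-normalization yields
\begin{align*}
\BET{\bw_i^{\bphi_k}}{\bw_i^{\bphi_k}}{\bphi_k} = \lambda_i^{\bphi_k} \;\longrightarrow\; \lambda_i^\bphi = \BE{\bw_i^\bphi}{\bw_i^\bphi}.
\end{align*}
On the other hand, the discrepancy
\begin{align*}
\BE{\bw_i^{\bphi_k}}{\bw_i^{\bphi_k}} - \BET{\bw_i^{\bphi_k}}{\bw_i^{\bphi_k}}{\bphi_k} = \int_{\Omega} \mathcal{E}\big(\bw_i^{\bphi_k}\big):\big[\C(\bphi) - \C(\bphi_k)\big]\mathcal{E}\big(\bw_i^{\bphi_k}\big)\,\mathrm{d}x
\end{align*}
tends to zero: the sequence $(\bw_i^{\bphi_k})$ is bounded in $\Hd$ by Theorem~\ref{EEW}(b) combined with the boundedness of $(\lambda_i^{\bphi_k})$, while the local Lipschitz continuity of $\C$ together with $\bphi_k \to \bphi$ in $\LuN$ delivers a uniform, vanishing pointwise bound on $\big|\C(\bphi_k(x)) - \C(\bphi(x))\big|$. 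Combining both observations gives the required norm convergence, which together with the weak convergence yields strong $\Hd$-convergence of $\bw_i^{\bphi_k}$ to $\bw_i^\bphi$, contradicting $\big\|\bw_i^{\bphi+\bh_k} - \bw_i^\bphi\big\|_{\Hd} \ge c$. The decisive technical point is precisely this exchange of the $\bphi_k$-dependent bilinear form $\BET{\cdot}{\cdot}{\bphi_k}$ for the fixed $\BE{\cdot}{\cdot}$, made quantitative by the Lipschitz continuity of $\C$; everything else is a direct synthesis of Theorem~\ref{slw}, simplicity of $\lambda_i^\bphi$, and the sign convention.
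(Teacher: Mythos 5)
Your argument is correct, and it reaches the conclusion by a route that differs from the paper's in its key mechanism. The paper expands $\BE{\bw^\bphi_i-\bw^{\bphi+\bh}_i}{\bw^\bphi_i-\bw^{\bphi+\bh}_i}$ directly: it swaps $\C(\bphi)$ for $\C(\bphi+\bh)$ in one factor, then uses \emph{both} state equations tested against the difference $\bw^\bphi_i-\bw^{\bphi+\bh}_i$ to convert the remaining gradient terms into $\rho$-inner products, which are killed term by term using the $L^2$-convergence from Theorem~\ref{slw}, the local Lipschitz continuity of $\rho$ and the convergence of the eigenvalues. You instead run a contradiction argument and invoke the Radon--Riesz principle: weak convergence in the Hilbert space $\big(\Hd,\BE{\cdot}{\cdot}\big)$ (with the sign fixed by \eqref{SC} and simplicity) plus convergence of norms implies strong convergence, and the norm convergence reduces to the scalar identity $\BET{\bw_i^{\bphi_k}}{\bw_i^{\bphi_k}}{\bphi_k}=\lambda_i^{\bphi_k}$ together with the same $\C$-swap estimate. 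Your reduction is arguably shorter, since the only quantitative input is $\lambda_i^{\bphi_k}\to\lambda_i^{\bphi}$ and the uniform bound $\theta\,\norm{\mathcal{E}(\bw_i^{\bphi_k})}_{L^2}^2\le\lambda_i^{\bphi_k}$, whereas the paper's direct estimate avoids the contradiction framing and yields the convergence for the full net $\bh\to\B 0$ without passing through subsequences (though, as you implicitly use, the sequential contradiction argument covers this equally well). Both proofs rest on the same pillars -- Theorem~\ref{slw}, the sign convention, and the Lipschitz continuity of $\C$ and $\rho$ -- so neither buys extra generality; yours trades the paper's explicit three-term decomposition for a standard functional-analytic shortcut.
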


\begin{proof}
	Let $(\bh_k)_{k\in\N} \subset \HL$ be any arbitrary sequence satisfying
	\begin{align}
	   \norm{\bh_k}_{\HL} < \delta \quad \text{for all $k\in\N$}.
	\end{align}
	Defining the sequence $(\bphi_k)_{k\in\N} \subset \HL$ by $\bphi_k := \bphi + \bh_k$ for all $k\in\N$, we can apply Theorem~\ref{slw} to conclude that
	\begin{align*}
	   \DB{w}{\bphi_k}_i\to \bw^\bphi_i \quad\text{in $\Lzd$ as $k\to\infty$,} 
	\end{align*} 
	along a non-relabeled subsequence. However, as the limit does not depend on the extracted subsequence, this convergence even holds true for the whole sequence. Note that for this reasoning it is essential that all members of the sequence are fixed by the sign convention \eqref{SC}. As the sequence $(\bh_k)_{k\in\N}$ was arbitrary, we further infer that
	\begin{align}\label{L2konv}
	   \bnorm{\DB{w}{\varphi+h}_i-\bw^\bphi_i}_{\Lzd}\to 0, 
	   \quad\text{as $\bh\to \B{0}$ in $\HN\cap\LuN$.}
	\end{align}
	If we can now show that
	\begin{align}
	\label{CONV:BE}
	   \BE{\bw^\bphi_i-\DB{w}{\varphi+h}_i}{\bw^\bphi_i-\DB{w}{\varphi+h}_i}\to 0,
	\end{align}
	as $\bh\to \B{0}$ in $\HN\cap\LuN$, the prove is completed.
	
	To this end, let $\bh\in\HL$ with  
	\begin{align*}
	   \norm{\bh}_{\HL} < \delta,
	\end{align*}
	be arbitrary. We derive the identity
	\begin{alignat*}{2}
	&\BE{\bw^\bphi_i-\DB{w}{\varphi+h}_i}{\bw^\bphi_i-\DB{w}{\varphi+h}_i}\\
    	&\quad=
        	\left[
        	       \BE{\bw^\bphi_i}{\bw^\bphi_i-\DB{w}{\varphi+h}_i}
        	   -\left\langle 		
        	       \mathcal{E}(\DB{w}{\varphi+h}_i),\mathcal{E}(\bw^\bphi_i-\DB{w}{\varphi+h}_i)
        	      \right\rangle_{\C(\bphi+\bh)}
        	\right]\\
    	&\qquad
        	+\left[
        	       \left\langle 
        	           \mathcal{E}(\DB{w}{\varphi+h}_i),\mathcal{E}(\bw^\bphi_i-\DB{w}{\varphi+h}_i)
        	       \right\rangle_{\C(\bphi+\bh)}
        	       -\BE{\DB{w}{\varphi+h}_i}{\bw^\bphi_i-\DB{w}{\varphi+h}_i}
        	\right].
	\end{alignat*}
	The second summand on the right-hand side converges to $0$ in $\HL$ as $\bh\to \B{0}$, since the norm $\|\DB{w}{\varphi+h}_i\|_{\Hd}$ is bounded by a constant that may depend on $\delta$ but not on $\bh$, and $\C$ is locally Lipschitz continuous. As $\bw_i^\bphi$ and $\bw_i^{\bphi+\bh}$ are eigenfunctions, they satisfy the state equation \eqref{WEstate} and we thus get
	\begin{alignat*}{2}
    	&
        	   \BE{\bw^\bphi_i}{\bw^\bphi_i-\DB{w}{\varphi+h}_i}
        	-
        	   \left\langle 
        	       \mathcal{E}\left(
                        \DB{w}{\varphi+h}_i\right),\mathcal{E}\left(\bw^\bphi_i-\DB{w}{\varphi+h}_i
                                    \right)
        	   \right\rangle_{\C(\bphi+\bh)}\\
    	&=
        	   \lambda^{\bphi}_i 
        	       \rsp{\bw^\bphi_i}{\bw^\bphi_i-\DB{w}{\varphi+h}_i}{\varphi}
        	-
        	   \lambda^{\bphi+\bh}_i
        	       \rsq{\DB{w}{\varphi+h}_i}{\bw^\bphi_i-\DB{w}{\varphi+h}_i}{\bphi+\bh}\\
    	&=
        	       \lambda^{\bphi}_i
        	           \left[ 	
        	                   \rsp{\bw^\bphi_i}{\bw^\bphi_i-\DB{w}{\varphi+h}_i}{\varphi}
        	               -
        	                   \rsp{\bw_i^{\bphi+\bh}}{\bw^\bphi_i-\DB{w}{\varphi+h}_i}{\varphi}
        	           \right]\\
    	&\quad+
        	   \lambda^{\bphi}_i
        	       \left[
        	               \rsp{\bw_i^{\bphi+\bh}}{\bw^\bphi_i-\DB{w}{\varphi+h}_i}{\varphi}
        	           -
        	               \rsq{\DB{w}{\varphi+h}_i}{\bw^\bphi_i-\DB{w}{\varphi+h}_i}{\bphi+\bh}
        	       \right]\\
    	&\quad
        	+
        	   \left[
        	       \lambda^{\bphi}_i-\lambda^{\bphi+\bh}_i
        	   \right]
        	   \rsq{\DB{w}{\varphi+h}_i}{\bw^\bphi_i-\DB{w}{\varphi+h}_i}{\bphi+\bh}.
	\end{alignat*}
	Here, the first summand converges to zero because of $\eqref{L2konv}$. The second summand converges to zero due to the local Lipschitz continuity of $\rho$ and the last summand converges to zero as a consequence of Theorem~\ref{slw}. This verifies \eqref{CONV:BE} and thus, the proof is complete.
\end{proof}


\section{Differentiability of the eigenvalues and the associated eigenfunctions}

\subsection{A formal consideration} \label{FC}

First of all, we want to discuss the desired differentiability results formally.
To obtain the Fr\'echet derivative of the functional
\begin{align*}
    \lambda_i: \HN\cap\LuN \to \R_{>0},\quad
    \bphi\mapsto \lambda_i^{\bphi},
\end{align*}
for $i\in \N$, we formally differentiate the state equation in the Gâteaux sense. 
If $\bw$ is an eigenfunction to the eigenvalue $\lambda_i^\bphi$, we have
\begin{align}
\label{EQ:VAR}
        \BE{\bw}{\bet}
    =	
        \lambda_i^{\bphi}\int_{\Omega}\rho(\bphi)\bw\cdot\bet\text{\,d}x
        \quad \text{for all $\bet\in\Hd$}.
\end{align}
 Computing the first variation of \eqref{EQ:VAR} with respect to $\bphi$ in the direction $\bh\in \HL$, and
choosing $\bw$ and $\bet$ as the $\Lzrp$-normalized eigenfunction $\DB{w}{\varphi}_i$ afterwards, we get
\begin{align}\label{ldif}
            \ld{\varphi}{i}\bh
    =
            \BED{\DB{w}{\varphi}_i}{\DB{w}{\varphi}_i}
        -
            \lambda_i^{\bphi}
            \int_{\Omega}\rho^{\prime}(\bphi)\bh\abs{\DB{w}{\varphi}_i}^2\text{\,d}x. 
\end{align}
Moreover, firstly plugging $\bw=\bw_i^\varphi$ into \eqref{EQ:VAR}, and then computing the first variation with respect to $\bphi$ in the direction $\bh\in \HL$ reveals 
that the formal derivative $\left(\bw^{\bphi}_i\right)^{\prime}\bh$ of $\bw^{\bphi}_i$ has to fulfill the equation
\begin{align}
    \begin{split}
        \begin{aligned}
            &
                    \BE{\left(\bw^{\bphi}_i\right)^{\prime}\bh}{\bet}
                -		
                    \lambda_i^{\bphi}
                    \int_{\Omega}\rho(\bphi)\left(\bw^{\bphi}_i\right)^{\prime}\bh\cdot\bet\text{\,d}x\\
            &\quad=		
                -
                    \BED{\bw^{\bphi}_i}{\bet}
                +
                    \lambda_i^{\bphi}
                    \int_{\Omega}\rho^{\prime}(\bphi)\bh\bw^{\bphi}_i\cdot \bet\text{\,d}x\\
            &\qquad
                +
                    \left(\lambda^{\bphi}_i\right)^{\prime}\bh
                    \int_{\Omega}\rho(\bphi)\bw^{\bphi}_i\cdot\bet\text{\,d}x.
        \end{aligned}
    \end{split}	
\end{align}
In the following, we intend to verify these results rigorously. We already see that formula $\eqref{ldif}$ is a priori not well-defined if there are at least two orthogonal eigenfunctions to the eigenvalue $\lambda_{i}^{\bphi}$, i.e., if $\lambda_{i}^{\bphi}$ is not simple. In the following approach, we will see that the simplicity of eigenvalues will play a crucial role in our analysis.

\subsection{Semi-differentiability of the first eigenvalue}

In \cite[Sect.~4.2]{Rousselet}, the concept of \emph{semi-differentiability} is introduced and applied to the first eigenvalue of an abstract problem discussed there. Semi-differentiability is a concept similar to Gateâux-differentiability but the limit does not need to fulfill any linearity or continuity assumptions, and the variation is only performed along a \emph{fixed positive direction}. The advantage becomes clear by the following example presented in \cite[Sect.~2.5]{Henrot}.
We consider the matrix-valued function
\begin{align*}
    A:\R\to\R^{2\times 2},\quad
    A(t)=
        \begin{pmatrix}
            1-t&0\\
             0&1+t
        \end{pmatrix},
\end{align*}
whose first eigenvalue 
\begin{align*}
    \lambda_1:\R\to\R,\quad \lambda_1^t =1-\abs{t}
\end{align*}
is not simple at $t=0$. Of course, $\lambda_1$ is not classically differentiable in $t=0$, but we still obtain a well defined limit 
\begin{align*}
    \underset{\substack{t\to 0\\ t>0}}{\lim}\;\frac{1-\abs{t}-1}{t}=-1.
\end{align*} 
This means we can still compute some sort of derivative in a fixed positive direction.

We now give a precise definition of semi-differentiability which can be found, e.g., in \cite[Def.~4.6]{Rousselet}.
\begin{Def}[Definition of semi-differentiability]
	Let $X,Y$ be Banach spaces and let $D\subseteq X$ be an open subset. Then the map $T:D\to Y$ is called \emph{semi-differentiable} at the point $x\in D$ if for all $h\in X$, there exists $y(x,h) \in Y$ such that
	\begin{align*}
	   \underset{\substack{t\to 0 \\ t>0}}{\lim}\; \frac{T(x+th)-T(x)}{t} = y(x,h).
	\end{align*}
	In this case we write $T'(x)h = y(x,h)$ to denote the semi-derivative of $T$ at the point $x$ with respect to the direction $h$.
\end{Def}

We want to show that in our problem the first eigenvalue also fulfills this weaker notion of differentiability, which will be enough to deduce first-order necessary optimality conditions for the first eigenvalue as we only want to derive convex combinations where $t>0$.

The advantage of this approach is that we do not have to assume simplicity of the first eigenvalue in order to obtain semi-differentiability, whereas as illustrated in Section~\ref{FC}, we need such simplicity assumptions to obtain classical differentiability.

The semi-differentiability of the first eigenvalue is established by the following lemma.

\begin{Thm}[Semi-differentiability of the first eigenvalue]\label{GDO}
	Let $\bphi,\bh\in \HL$ be arbitrary and let us define
	\begin{align}\label{GDef}
    	   (\lambda_1^{\bphi})^{\prime}\bh
    	\coloneqq
    	   \inf\left\{
    	       \BED{\bu}{\bu}-\ev\rspd{\bu}{\bu}
    	       \,\left|\,
    	           \begin{aligned}
    	               &\bu\in \Hd \text{ is an} \\
    	               &\text{eigenfunction to }\ev \\ 
    	               &\text{with } \norm{\bu}_{\Lzrp}=1 
    	           \end{aligned}
    	       \right\}
    	\right..
	\end{align}
	Then we have
	\begin{align*}
        	\underset{\substack{t\to 0\\ t>0}}{\lim}\;
        	\frac{\lambda_1^{\bphi+t\bh}-\ev}{t}
    	=
    	     \evd
	\end{align*}
	and thus, the eigenvalue $\lambda_1^\bphi$ is semi-differentiable with respect to $\bphi$.
\end{Thm}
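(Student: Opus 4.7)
The plan is to establish matching upper and lower bounds for the one-sided difference quotient, showing that both $\limsup_{t\to 0^+}$ and $\liminf_{t\to 0^+}$ of $(\lambda_1^{\bphi+t\bh} - \lambda_1^\bphi)/t$ coincide with the infimum $(\lambda_1^\bphi)'\bh$ defined in \eqref{GDef}.

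For the \emph{upper bound}, I would fix an arbitrary $\Lzrp$-normalized eigenfunction $\bu$ to $\lambda_1^\bphi$ and apply the Rayleigh quotient characterization of Theorem~\ref{EEW}(b) to $\lambda_1^{\bphi+t\bh}$ with test function $\bu$. Exploiting the $C^{1,1}_\text{loc}$-regularity of $\C$ and $\rho$, together with $\BE{\bu}{\bu} = \ev$ and $\|\bu\|_{\Lzrp}=1$, first-order Taylor expansions yield
\begin{align*}
    \BET{\bu}{\bu}{\bphi+t\bh} &= \ev + t\, \BED{\bu}{\bu} + O(t^2), \\
    \big(\bu,\bu\big)_{\rho(\bphi+t\bh)} &= 1 + t\, \rspd{\bu}{\bu} + O(t^2).
\end{align*}
Expanding the resulting quotient, subtracting $\ev$, dividing by $t>0$, and passing to $\limsup$ gives the estimate $\limsup_{t\to 0^+} (\lambda_1^{\bphi+t\bh} - \ev)/t \le \BED{\bu}{\bu} - \ev \rspd{\bu}{\bu}$. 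Taking the infimum over admissible $\bu$ produces $\limsup \le (\lambda_1^\bphi)'\bh$.

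For the \emph{lower bound}, I would pick any sequence $t_k \to 0^+$ together with $\Lzrpa{\bphi+t_k\bh}$-normalized eigenfunctions $\bu_k$ to $\lambda_1^{\bphi+t_k\bh}$. Corollary~\ref{Wst} provides, along a non-relabeled subsequence, a weak $\Hd$-limit $\oB{u}$ which is an $\Lzrp$-normalized eigenfunction to $\ev$, with $\bu_k \to \oB{u}$ in $\Lzrp$ and $\lambda_1^{\bphi+t_k\bh} \to \ev$. \emph{The main obstacle} is upgrading this weak $\Hd$-convergence to strong convergence, which is essential in order to pass to the limit in the $\C'(\bphi)\bh$-weighted quadratic form $\BED{\bu_k}{\bu_k}$. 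My plan for this upgrade uses the identity $\lambda_1^{\bphi+t_k\bh} = \BET{\bu_k}{\bu_k}{\bphi+t_k\bh}$: combined with the eigenvalue convergence and the Lipschitz-type control of $\C(\bphi+t_k\bh) - \C(\bphi)$ via dominated convergence, one obtains $\BE{\bu_k}{\bu_k} \to \BE{\oB{u}}{\oB{u}}$. Since by \eqref{eq:newinner} together with Korn's inequality, $\BE{\cdot}{\cdot}$ is an inner product on $\Hd$ equivalent to the standard one, weak convergence combined with convergence of the induced norms yields $\bu_k \to \oB{u}$ strongly in $\Hd$.

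With strong $\Hd$-convergence in hand, Theorem~\ref{EEW}(b) applied at $\bphi$ with the admissible test function $\bu_k$ gives $\BE{\bu_k}{\bu_k} \ge \ev\, \big(\bu_k,\bu_k\big)_{\rho(\bphi)}$. Combining this with the Taylor identities
\begin{align*}
    \BET{\bu_k}{\bu_k}{\bphi+t_k\bh} &= \BE{\bu_k}{\bu_k} + t_k\, \BED{\bu_k}{\bu_k} + O(t_k^2), \\
    1 = \big(\bu_k,\bu_k\big)_{\rho(\bphi+t_k\bh)} &= \big(\bu_k,\bu_k\big)_{\rho(\bphi)} + t_k\, \rspd{\bu_k}{\bu_k} + O(t_k^2),
\end{align*}
and dividing by $t_k>0$ yields
\begin{align*}
    \frac{\lambda_1^{\bphi+t_k\bh} - \ev}{t_k} \ge \BED{\bu_k}{\bu_k} - \ev\, \rspd{\bu_k}{\bu_k} + O(t_k).
\end{align*}
Using the strong $\Hd$-convergence to handle the $\C'(\bphi)\bh$-weighted term and the $\Lzrp$-convergence for the $\rho'(\bphi)\bh$-weighted one, I pass to the $\liminf$ and invoke the definition \eqref{GDef} of $(\lambda_1^\bphi)'\bh$ to conclude
\begin{align*}
    \liminf_{k\to\infty} \frac{\lambda_1^{\bphi+t_k\bh} - \ev}{t_k} \ge \BED{\oB{u}}{\oB{u}} - \ev\, \rspd{\oB{u}}{\oB{u}} \ge (\lambda_1^\bphi)'\bh.
\end{align*}
Since the sequence $(t_k)$ was arbitrary, combining this with the upper bound proves that the one-sided limit exists and equals $(\lambda_1^\bphi)'\bh$.
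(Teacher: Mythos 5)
Your proposal is correct, and for the lower bound it takes a genuinely different route from the paper. For the upper bound the paper first proves that the infimum in \eqref{GDef} is actually \emph{attained} (via a minimizing sequence whose convergence is upgraded to strong $\Hd$-convergence) and then expands around that minimizer $\bu^{\ast}$; you instead test the Rayleigh quotient at $\bphi+t\bh$ with an arbitrary admissible eigenfunction and take the infimum only at the very end, so attainment of the infimum is never needed in your argument (finiteness of the infimum, which the paper establishes separately, also falls out of your two-sided sandwich). For the lower bound the paper argues by contradiction: it inserts the \emph{limit} eigenfunction $\bu$ into the definition of the semi-derivative and rearranges everything into cross terms such as $\big\langle\E(\bu),\E(\bu-\bu^{\bphi+t_k\bh})\big\rangle_{\C'(\bphi)t_k\bh}$, which are linear in the weakly convergent difference and hence $o(t_k)$ using only the weak $\Hd$-convergence supplied by Corollary~\ref{Wst}; in this way the quadratic term $\BED{\bu_k}{\bu_k}$ in the perturbed eigenfunctions never appears and no strong convergence is required. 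You instead confront that quadratic term head-on by first upgrading to strong $\Hd$-convergence (convergence of $\BE{\bu_k}{\bu_k}$ to $\BE{\oB{u}}{\oB{u}}$ plus weak convergence in the equivalent inner product \eqref{eq:newinner}), which is exactly the trick the paper itself uses to prove attainment of the infimum and again in Lemma~\ref{dwko}; after that, the direct Taylor-expansion estimate goes through. Both arguments are sound; yours is arguably more transparent, at the price of the extra strong-convergence step, while the paper's cross-term decomposition gets by with weak convergence alone. The only point to make explicit is the standard subsequence-of-subsequences reasoning when converting your subsequential $\liminf$ bound into a bound for arbitrary $t\to 0^+$, which your closing remark already gestures at.
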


\begin{proof}
	We first prove that the infimum in \eqref{GDef} is actually attained by a minimizer.
	To this end, let $\bu\in F_{\text{ad}}$ be arbitrary, where the feasible set is given as
	\begin{align*}
    	   F_{\text{ad}}
    	\coloneqq 
        	\left\{
        	   \bu\in \Hd 
        	   \left|\;
        	       \begin{aligned}
        	           &\bu \text{ is an eigenfunction to } \ev \\ 
        	           &\text{with } \norm{\bu}_{\Lzrp}=1
        	       \end{aligned}
        	   \right.
        	\right\}.
	\end{align*}
	By the differentiability and the local Lipschitz continuity of $\C$, we infer that there exists a constant $c_\bphi>0$ and $t_0>0$ such that for all $t<t_0$,
	\begin{align} \label{EST:INF}
    	\begin{aligned}
    	   &-c_{\bphi}
    	       \norm{\bh}_{\HL}\norm{\bu}_{\Hd}^2
    	       \\[1ex]
    	   &\qquad \le -\abs{
    		  \frac{\BET{\bu}{\bu}{\bphi+t\bh}-\BET{\bu}{\bu}{\bphi}}{t}}\\[1ex]
    	   &\qquad \le
    	       1 + \BED{\bu}{\bu}.
	   \end{aligned}
	\end{align}
	Using $\eqref{WEstate}$, we conclude that
	\begin{align}\label{uHb}
	   \norm{\bu}_{\Hd}^2\le \ev \quad\text{for all $\bu\in F_{\text{ad}}$}.
	\end{align}
	Moreover, due to \eqref{rhabs} and \eqref{SKP}, there exists a constant $c_{\bphi}^*>0$ such that
	\begin{align} \label{uurph}
    	   \abs{\rspd{\bu}{\bu}}
    	\le c_{\bphi}^*\,\norm{\bh}_{\HL}
    	 \quad\text{for all $\bu\in F_{\text{ad}}$}.
	\end{align}	
	Eventually, combining the above estimates, we conclude that
	\begin{align*}
	   &\BED{\bu}{\bu} -\ev\rspd{\bu}{\bu} \\
	   &\quad \ge - \left(c_\bphi + c_\bphi^*  \right) \ev \norm{\bh}_{\HL}   -1 \;>-\infty
	\end{align*}
	for all $u\in F_{\text{ad}}$. This directly implies that the infimum $(\lambda_1^{\bphi})^{\prime}\bh$ exists.
	
	Hence, we can find a minimizing sequence $\left(\bu_{n}\right)_{n\in \N}\subset F_{\text{ad}}$ such that 
	\begin{align*}
	   \underset{n\to \infty}{\lim} \left[\BED{\bu_n}{\bu_n}-\ev\rspd{\bu_n}{\bu_n}\right]=\evd.
	\end{align*}
	Due to $\eqref{uHb}$, there exists $\bu^{\ast}\in \Hd$ such that
	\begin{align*}
    	   \bu_n\rightharpoonup \bu^{\ast}\quad\text{in } \Hd,
    	\quad\text{and}\quad
    	   \bu_n\to \bu^{\ast}\quad\text{in } \Lzrp
	\end{align*}
	as $n\to \infty$, up to a subsequence extraction.
	In particular, this implies that $\bu^{\ast}\in F_{ad}$ which leads to
	\begin{align*}
	       \BE{\bu_n-\bu^{\ast}}{\bu_n-\bu^{\ast}}
       =
            \ev \rsp{\bu_n-\bu^{\ast}}{\bu_n-\bu^{\ast}}{\varphi}.
	\end{align*}
	This implies that $\bu_n\to \bu^{\ast}$ even strongly in $\Hd$. In particular we obtain
	\begin{align*}
    	   \evd
    	&=
            \underset{n\to \infty}{\lim} \left[\BED{\bu_n}{\bu_n}-\ev\rspd{\bu_n}{\bu_n}\right]\\
    	&=
            \BED{\bu^{\ast}}{\bu^{\ast}}-\ev\rspd{\bu^{\ast}}{\bu^{\ast}}.
	\end{align*}
	Hence, the infimum is attained at $\bu^*\in \Hd$.
	
	To prove 
	\begin{align}
     \label{ASS:SEMI}
    	\underset{\substack{t\to 0\\ t>0}}{\lim}\; \frac 1 t
    	   \abs{\lambda_1^{\bphi+t\bh}-\ev-(\ev)'[t\bh]}=0,
	\end{align}
	it suffices to show that there exist functions $f,g:\R\to \R$ with $f,g\in o(t)$ as $t\to 0$ such that for all $t>0$,
	\begin{align}
	   \lambda_1^{\bphi+t\bh}-\ev-(\ev)'[t\bh]&\le f(t)\label{do},\\
	   -\lambda_1^{\bphi+t\bh} +\ev + (\ev)'[t\bh]&\le g(t)\label{du}.
	\end{align}
	
	By the construction of $\bu^*$, we first observe that
	\begin{align*}
    	   &\lambda_1^{\bphi+t\bh}-\ev-(\ev)'[t\bh]\\
    	&\quad =
    	   \lambda_1^{\bphi+t\bh}
    	   -\BE{\bu^{\ast}}{\bu^{\ast}}
    	   -\BEDd{\bu^{\ast}}{\bu^{\ast}}
    	   +\ev\rspt{\bu^{\ast}}{\bu^{\ast}},
	\end{align*}
	since $\bu^{\ast}$ is an $\Lzrp$-normalized eigenfunction to the eigenvalue $\ev$. We compute
	\begin{align*}
    	&\lambda_1^{\bphi+t\bh}
        	-\BE{\bu^{\ast}}{\bu^{\ast}}
        	-\BEDd{\bu^{\ast}}{\bu^{\ast}}
        	+\ev\rspt{\bu^{\ast}}{\bu^{\ast}}\\
    	&\quad=
        	\left(
        	   \ev-\lambda_1^{\bphi+t\bh}
        	\right) 
        	\left(\bu^{\ast},\bu^{\ast}\right)_{
        		\rho(\bphi+t\bh)-\rho(\bphi)
        	   }
        	+\langle\E(\bu^{\ast}),\E(\bu^{\ast})\rangle_{
        		\C(\bphi+t\bh)
        		-\C(\bphi)
        		-\C^{\prime}(\bphi)t\bh
        	   }\\
    	&\qquad
        	-
            	\ev\left(\bu^{\ast},\bu^{\ast}\right)_{
            		\rho(\bphi+t\bh)
            		-\rho(\bphi)
            		-\rho^{\prime}(\bphi)t\bh
            	}
        	+
            	\lambda_1^{\bphi+t\bh}
            	\left(\bu^{\ast},\bu^{\ast}\right)_{\rho(\bphi+t\bh)}
            	-\langle\E(\bu^{\ast}),\E(\bu^{\ast})\rangle_{
            		\C(\bphi+t\bh)
        	}.
	\end{align*}
	Now the first three summands on the right-hand side are clearly in $o(t)$ as $t\to 0$ since the eigenvalues converge, and the functions $\rho$ and $\C$ are differentiable and locally Lipschitz continuous.
	For the remaining summands we can use the Courant--Fischer representation for the first eigenvalue which yields
	\begin{align*}
    	   \lambda_1^{\bphi+t\bh}
    	=
        	\min\left\{\left.
        	   \frac{\langle\E(\bu),\E(\bu)\rangle_{\C(\bphi+t\bh)}}{\norm{\bu}^2_{\Lzrpa{\bphi+t\bh}}}
        	\right|
        	   \begin{aligned}
        	       &\bu\in \Hd,\\
        	       &\bu\neq \B{0}
        	   \end{aligned}	
        	\right\}
        \le                     \frac{\langle\E(\bu^{\ast}),\E(\bu^{\ast})\rangle_{\C(\bphi+t\bh)}}{\norm{\bu^{\ast}}^2_{\Lzrpa{\bphi+t\bh}}}.
	\end{align*}
	This implies
	\begin{align*}
        	\lambda_1^{\bphi+t\bh}
        	 \left(\bu^{\ast},\bu^{\ast}\right)_{\rho(\bphi+t\bh)}
    	-
        	\langle\E(\bu^{\ast}),\E(\bu^{\ast})\rangle_{
        		\C(\bphi+t\bh)
        	}
        \le 0,
	\end{align*}
	and thus, $\eqref{do}$ is established.
	
	To prove $\eqref{du}$, we argue by contradiction and assume that \eqref{du} does not hold. Then, there exists $\eps>0$ and a sequence $(t_k)_{k\in\N}\subset (0,1]$ with $t_k\to 0$ as $k\to \infty$ such that for all $k\in\N$,
	\begin{align*}
	   -\lambda_1^{\bphi+t_k\bh} +\ev + (\ev)'[t_k\bh] \ge \eps t_k.
	\end{align*} 
	Then, according to Theorem~\ref{slw}, there exists a $L^2_\bphi(\Omega;\R^d)$-normalized eigenfunction $\bu$ to the eigenvalue $\lambda_1^\bphi$, as well as a sequence $(\bu^{\bphi+t_k\bh})_{k\in\N}$ consisting of $L^2_{\bphi+t_k\bh}(\Omega;\R^d)$-normalized eigenfunctions to the eigenvalues $(\lambda_1^{\bphi+t_k\bh})_{k\in\N}$ such that 
	\begin{align} \label{wHe}
    	   \bu^{\bphi+t_k\bh} \rightharpoonup \bu \quad\text{in $\Hd$}
    	\quad\text{and}\quad
    	   \bu^{\bphi+t_k\bh} \to \bu \quad\text{in $\Lzd$}
	\end{align}
	as $k\to\infty$, along a non-relabeled subsequence. Recalling the definition of $\evd$, we infer that
	\begin{align*}
    	&-\lambda_1^{\bphi+t_k\bh}+\ev+(\ev)'[t_k\bh] \\[1ex]
    	&\quad = -\lambda_1^{\bphi +t_k\bh}  (\bu^{\bphi +t_k\bh},\bu^{\bphi       +t_k\bh})_{\rho(\bphi+t_k\bh)} 
    	+ \ev + (\ev)'[t_k\bh] \\[1ex]
    	&\quad\le 
    	   -\bBEC{\bu^{\bphi+t_k\bh}}{\bu^{\bphi+t_k\bh}}{\C(\bphi+t_k\bh)}
    	   +\ev
    	   + \bBEC{\bu}{\bu}{\C'(\bphi)t_k\bh}\\
    	&\qquad\quad -\ev\left(\bu,\bu\right)_{\rho^{\prime}(\bphi)t_k\bh}.
	\end{align*}
	Recalling the identities
	\begin{align*}
    	   \bBE{\bu}{\bu^{\bphi+t_k\bh}} 
    	&= \lambda_1^\bphi \brsq{\bu}{\bu^{\bphi+t_k\bh}}{\bphi},\\
    	   \bBEC{\bu}{\bu-\bu^{\bphi+t_k\bh}}{\C(\bphi+t_k\bh)} 
    	&= \lambda_1^{\bphi+t_k\bh}  \brsq{\bu}{\bu-\bu^{\bphi+t_k\bh}}{\bphi+t_k\bh} ,
	\end{align*}
	a straightforward computation reveals that
	\begin{align*}
    	&-\lambda_1^{\bphi+t_k\bh}+\ev+(\ev)'[t_k\bh] \\[1ex]
    	&\quad\le 
    	   \big\langle\E(\bu),\E(\bu)\big\rangle_{
    		  \C^{\prime}(\bphi)t_k\bh
    		  +	\C(\bphi)
    		  -	\C(\bphi+t_k\bh)
    	}
    	   +
    	   \ev\big(\bu,\bu\big)_{
    		  \rho(\bphi+t_k\bh)
    		      -	\rho(\bphi)
    		      -	\rho^{\prime}(\bphi)t_k\bh
    	   }\\
    	&\qquad
    	   +
    	    \big\langle\E(\bu),\E(\bu-\bu^{\bphi+t_k\bh})\big\rangle_{
    		  \C(\bphi+t_k\bh)
    		      -	\C(\bphi)
    		      -	\C^{\prime}(\bphi)t_k\bh
    	       }\\
    	&\qquad
    	   +
    	   \big\langle\E(\bu),\E(\bu-\bu^{\bphi+t_k\bh})\big\rangle_{
    		  \C^{\prime}(\bphi)t_k\bh
    	       }
    	   + 
    	   \lambda_1^{\bphi+t_k\bh}
    	   \big(\bu,\bu-\bu^{\bphi+t_k\bh}\big)_{
    		  \rho(\bphi)-\rho(\bphi+t_k\bh)
    	   }
    	\\
    	&\qquad
    	   +(\ev-\lambda_1^{\bphi+t_k\bh})
    	       \big(\bu,\bu-\bu^{\bphi+t_k\bh}\big)_{\rho(\bphi)}
    	   +
    	       \big(\lambda_1^{\bphi+t_k\bh}-\ev\big)
    	       \big(\bu,\bu\big)_{\rho(\bphi+t_k\bh)-\rho(\bphi)}.
	\end{align*}
	Recalling the convergence property $\eqref{wHe}$, that $\rho$ and $\C$ are of class $C^1_\text{loc}$, that $\bphi\mapsto\lambda_1^\bphi$ is locally Lipschitz continuous according to Lemma~\ref{llip}, and that
	\begin{align*}
	   \BED{\bu}{\cdot}\in \left(\Hd\right)^{\ast},
	\end{align*}
	we conclude that the right-hand side belongs to $o(t_k)$ as $k\to\infty$.
	
	On the other hand we assumed
	\begin{align*}
	   \eps t_k \le -\lambda_1^{\bphi+t_k\bh} +\ev + (\ev)'[t_k\bh],
	\end{align*}
	which is obviously a contradiction as the inequality cannot hold for $k$ sufficiently large. This proves \eqref{du}.
	
	Now, \eqref{ASS:SEMI} directly follows from \eqref{do} and \eqref{du} and thus, the proof is complete.
\end{proof}

\medskip

\subsection{Fr\'echet differentiability of eigenvalues and their corresponding eigenfunctions}

If the considered eigenvalue is simple, we can even obtain stronger differentiability results in the Fr\'echet sense.
To be precise, if for $i\in\N$ and  $\bphi\in \HN\cap\LuN$, the eigenvalue $\lambda_i^\bphi$ associated with $\bphi$ is simple, then $\lambda_i^\bphi$ and any fixed $L^2_\bphi(\Omega;\R^d)$-normalized eigenfunction $\bw_i^\bphi$ are even Fr\'echet-differentiable with respect to $\bphi$. This is established by the following theorem:

\begin{Thm}[Fr\'echet-differentiability of simple eigenvalues and their eigenfunctions]\label{difewv}
	Let $\bphi\in \HN\cap\LuN$ be arbitrary and suppose that for $i\in\N$, the eigenvalue $\lambda_i^\bphi$ is simple. We further fix a $L^2_\bphi(\Omega;\R^d)$-normalized eigenfunction $\bw_i^\bphi$ to the eigenvalue $\lambda_i^\bphi$.
	
	Then there exist constants $\delta_i^\bphi,r_i^\bphi>0$ such that the operator 
	\begin{align*}
        	S^{\bphi}_i: 
        	B_{\delta_i^\bphi}(\bphi)\subset \HN\cap\LuN 
    	&\to 
        	B_{r_i^\bphi}
        	   \big(
        	       (\DB{w}{\varphi}_i,\lambda_i^{\bphi})
        	   \big)
        	\subset\Hd\times \R,\\
        	\btheta
    	&\mapsto 
    	   \big(\bw^{\btheta}_i,\lambda^{\btheta}_i\big),
	\end{align*}
	is well-defined and continuously Fr\'echet differentiable. Here, $\bw_i^\btheta$ denotes the unique $L^2_\btheta(\Omega;\R^d)$-normalized eigenfunction to the eigenvalue $\lambda_i^\btheta$ satisfying the sign condition \eqref{SC} written for $\bh=\btheta-\bphi$.
	
	Moreover, for any $\bh\in \HN\cap\LuN$, the Fr\'echet derivative $\left(\lambda^{\bphi}_i\right)^{\prime}\bh$ of the eigenvalue $\lambda^{\bphi}_i$ at $\bphi$ in the direction $\bh$ reads as
	\begin{align}\label{lhdef}
    	   \left(\lambda^{\bphi}_i\right)^{\prime}\bh
    	\coloneqq 	
    	   \left(
    	       S^{\bphi}_{i,2}(\bphi)
    	    \right)^{\prime}\bh
    	=
    	       \BED{\bw^{\bphi}_i}{\bw^{\bphi}_i}
    	   -
    	       \lambda^{\bphi}_i
    	        \int_{\Omega}\rho^{\prime}(\bphi)\bh\abs{\bw^{\bphi}_i}^2\textup{\,d}x,
	\end{align}
	and the Fr\'echet derivative $\left(\bw^{\bphi}_i\right)^{\prime}\bh\coloneqq \big(S^{\bphi}_{i,1}(\bphi)\big)^{\prime}\bh\in \Hd$ of the corresponding eigenfunction $\bw_i^{\bphi}$ at $\bphi$ in the direction $\bh$ is the unique solution of
	\begin{alignat}{2}\label{whdef}
    	\begin{aligned}
    	   &\BE{
    		  \left(\bw^{\bphi}_i\right)^{\prime}\bh
    	       }
    	       {\bet}
    	   -
    	       \lambda^{\bphi}_i
    	       \int_{\Omega}\rho(\bphi)\left(\bw^{\bphi}_i\right)^{\prime}\bh\cdot\bet\textup{\,d}x\\
    	&\quad=		
    	   -\BED{\bw^{\varphi}_i}{\bet}
    	   +
    	   \lambda^{\bphi}_i
    	       \int_{\Omega}\rho^{\prime}(\bphi)\bh\bw^{\varphi}_i\cdot\bet\textup{\,d}x\\
    	&\qquad
    	   +
    	   \left(\lambda^{\bphi}_i\right)^{\prime}\bh
    	   \int_{\Omega}\rho(\bphi)\bw^{\bphi}_i\cdot\bet\textup{\,d}x,
    	\end{aligned}
	\end{alignat}
	for all $\bet\in \Hd$, that fulfills
	\begin{align}\label{zB}
	   \rsp{\left(\bw^{\bphi}_i\right)^{\prime}\bh}{\bw_i^{\bphi}}{\bphi}=
	   -\frac{1}{2}\int_{\Omega}\rho^{\prime}(\bphi)\abs{\bw_i^{\bphi}}^2\text{d}x.
	\end{align}
\end{Thm}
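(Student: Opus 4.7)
The plan is to derive the theorem from the implicit function theorem applied to a map whose zero set encodes both the weak eigenvalue equation and the $L^2_\btheta$-normalization of the eigenfunction. On a sufficiently small $(\HN\cap\LuN)$-neighborhood $U$ of $\bphi$ I would define
\begin{align*}
F_1:U\times \Hd \times \R \to (\Hd)',\quad F_1(\btheta,\bw,\lambda)(\bet) := \BET{\bw}{\bet}{\btheta} - \lambda (\bw,\bet)_{\rho(\btheta)},
\end{align*}
\begin{align*}
F_2:U\times \Hd \times \R \to \R,\quad F_2(\btheta,\bw,\lambda) := (\bw,\bw)_{\rho(\btheta)} - 1,
\end{align*}
and set $F:=(F_1,F_2)$. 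By \eqref{WEstate} we have $F(\bphi,\bw_i^\bphi,\lambda_i^\bphi)=0$, and the $C^{1,1}_{\textup{loc}}$-regularity of $\C$ and $\rho$ combined with the pointwise bounds \eqref{rhabs} and \eqref{PC1} implies that $F$ is continuously Fr\'echet differentiable on $U\times \Hd\times \R$.

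The decisive step is to verify that the partial derivative
\begin{align*}
A := D_{(\bw,\lambda)}F(\bphi,\bw_i^\bphi,\lambda_i^\bphi):\Hd\times \R \to (\Hd)'\times \R
\end{align*}
is an isomorphism. A short computation gives
\begin{align*}
A(\B v,\mu) = \Big(\bet\mapsto \BE{\B v}{\bet} - \lambda_i^\bphi(\B v,\bet)_{\rho(\bphi)} - \mu(\bw_i^\bphi,\bet)_{\rho(\bphi)},\; 2(\B v,\bw_i^\bphi)_{\rho(\bphi)}\Big),
\end{align*}
and both injectivity and surjectivity hinge on the simplicity of $\lambda_i^\bphi$. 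For injectivity, testing the first coordinate against $\bet=\bw_i^\bphi$ and using \eqref{WEstate} forces $\mu=0$; the remaining identity says that $\B v$ is either zero or an eigenfunction to $\lambda_i^\bphi$, so simplicity yields $\B v\in\R\bw_i^\bphi$, and the scalar constraint then gives $\B v=\B 0$. For surjectivity, the operator $L:\B v\mapsto \BE{\B v}{\cdot} - \lambda_i^\bphi(\B v,\cdot)_{\rho(\bphi)}$ is a compact perturbation of the Lax--Milgram isomorphism (via the compact embedding $\Hd\hookrightarrow \Lzd$), hence a self-adjoint Fredholm operator of index zero with $\ker L=\R\bw_i^\bphi$ and $\textup{range}(L)=\{G\in(\Hd)'\,:\,G(\bw_i^\bphi)=0\}$. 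For a given right-hand side $(G,a)\in(\Hd)'\times\R$, choosing $\mu:=-G(\bw_i^\bphi)$ places the reduced functional in $\textup{range}(L)$, and prescribing $(\B v,\bw_i^\bphi)_{\rho(\bphi)}=a/2$ selects a unique representative modulo the kernel. I expect this Fredholm bookkeeping, together with the careful handling of the simplicity assumption, to be the main analytic obstacle.

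The implicit function theorem then produces constants $\delta_i^\bphi,r_i^\bphi>0$ and a continuously Fr\'echet differentiable map $\btheta\mapsto(\tilde\bw(\btheta),\tilde\lambda(\btheta))$ on the ball $B_{\delta_i^\bphi}(\bphi)$, taking values in $B_{r_i^\bphi}\big((\bw_i^\bphi,\lambda_i^\bphi)\big)$ and satisfying $F(\btheta,\tilde\bw(\btheta),\tilde\lambda(\btheta))=0$. Thus $\tilde\bw(\btheta)$ is an $L^2_\btheta$-normalized eigenfunction to the eigenvalue $\tilde\lambda(\btheta)$. Since $\tilde\bw(\btheta)\to\bw_i^\bphi$ in $\Hd$ as $\btheta\to\bphi$, after possibly shrinking $\delta_i^\bphi$ the sign condition \eqref{SC} holds for $\tilde\bw(\btheta)$, and Corollary~\ref{COR:SIGN} together with Lemma~\ref{dwko} identifies $(\tilde\bw(\btheta),\tilde\lambda(\btheta))$ with the pair $(\bw_i^\btheta,\lambda_i^\btheta)$ from the statement. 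This establishes the well-definedness and continuous Fr\'echet differentiability of the operator $S_i^\bphi$.

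Finally, the explicit formulas \eqref{lhdef}--\eqref{zB} are obtained by differentiating the identity $F(\btheta,\bw_i^\btheta,\lambda_i^\btheta)=0$ at $\btheta=\bphi$ in an arbitrary direction $\bh$. Differentiating $F_2$ immediately yields \eqref{zB}. Differentiating $F_1$, read as an equation in $(\Hd)'$ tested against $\bet\in\Hd$, produces \eqref{whdef}. Substituting $\bet=\bw_i^\bphi$ into the resulting identity and using once more that $\bw_i^\bphi$ is an eigenfunction cancels every term containing $(\bw_i^\bphi)'\bh$, and what remains is precisely the formula \eqref{lhdef} for $(\lambda_i^\bphi)'\bh$.
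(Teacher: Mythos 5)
Your proposal is correct and follows essentially the same route as the paper: the implicit function theorem applied to $F=(F_1,F_2)$ encoding the weak eigenvalue equation and the $L^2_{\btheta}$-normalization, with injectivity/surjectivity of $\partial_{(\bw,\lambda)}F$ resting on the simplicity of $\lambda_i^{\bphi}$ and a Fredholm alternative (which the paper isolates as Lemma~\ref{Freset}), followed by identification of the implicit function with $(\bw_i^{\btheta},\lambda_i^{\btheta})$ via the continuity results and the sign convention. The only cosmetic differences are that you inline the Fredholm bookkeeping rather than citing it as a separate lemma, and you obtain \eqref{lhdef} by testing the differentiated equation with $\bet=\bw_i^{\bphi}$ instead of reading off $\mu=-\langle\B f,\bw_i^{\bphi}\rangle$ from the explicit inverse; both yield the same formula.
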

To prove of this theorem, we intend to apply the implicit function theorem (see, e.g., \cite[Theorem~4.B]{Zeidler1}). Therefore,  it is essential to show a bijectivity condition. In our setting this condition will be fulfilled if a certain PDE resulting from the eigenvalue equations has a unique solution. To show this existence and uniqueness we need to apply the Fredholm alternative established by Lemma~\ref{Freset}.

In the following, we use the space
\begin{align*}
    \HD\coloneqq \big(\Hd\big)^{\ast}
\end{align*}
along with the canonical embedding
\begin{align}\label{Kein}
    \Lzrp \hookrightarrow H^{-1}(\Omega,\R^d),\quad
    \B{v}\mapsto \left(\bet\mapsto \rsp{\B{v}}{\bet}{\bphi}\right).
\end{align}
In particular, the duality pairing is given by
\begin{align*}
    \DP{\cdot}{\cdot} = \rsp{\cdot}{\cdot}{\bphi}.
\end{align*}

\begin{Lem}[Fredholm alternative for the eigenvalue problem]\label{Freset}
	Let $\bphi\in \HN\cap\LuN$ be arbitrary and suppose that for $i\in\N$, the eigenvalue $\lambda_i^\bphi$ is simple. We further fix a $L^2_\bphi(\Omega;\R^d)$-normalized eigenfunction $\bw_i^\bphi$ to the eigenvalue $\lambda_i^\bphi$.
	
	Then there exists a solution $\bu\in \Hd$ of the equation
	\begin{align}\label{wdg}
    	   \BE{\bu}{\bet}-\lambda^{\bphi}_i\rsp{\bu}{\bet}{\varphi}
    	&= 
    	   \langle\B{f},\bet \rangle_{H^{-1},H^1},
	\end{align}
	for all $\bet\in \Hd$ if and only if $\B{f}\in \HD$ fulfills 
	\begin{align*}
	   \langle\B{f}, \bw^{\bphi}_i\rangle_{H^{-1},H^1}=0.
	\end{align*}
	In this case, there is a unique solution $\bu^{\perp}$ in $\Hd\cap \langle\bw^{\bphi}_i\rangle_{\textup{span}}^{\perp,\Lzrp} $, and any other solution can be expressed as $\bu^{\perp}+\alpha\bw^{\bphi}_i$
	for some $\alpha\in \R$.
\end{Lem}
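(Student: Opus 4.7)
The plan is to recast \eqref{wdg} as an operator equation $(I-K)\bu = \tilde{\mathcal{T}}\B{f}$ on the Hilbert space $(\Hd, \BE{\cdot}{\cdot})$ and to apply the classical Fredholm alternative for compact self-adjoint operators. First, since $\BE{\cdot}{\cdot}$ is an inner product on $\Hd$ whose induced norm is equivalent to the standard $H^1$-norm, I would invoke the Riesz representation theorem to obtain an isomorphism $\tilde{\mathcal{T}}:\HD\to\Hd$ characterized by $\BE{\tilde{\mathcal{T}}\B{g}}{\bet} = \langle \B{g}, \bet \rangle_{H^{-1},H^1}$ for all $\bet\in\Hd$. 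I then define $K:\Hd\to\Hd$ by $K\bu := \lambda^{\bphi}_i\,\tilde{\mathcal{T}}\big(\iota(\rho(\bphi)\bu)\big)$, where $\iota:\Lzd\hookrightarrow\HD$ is the canonical embedding from \eqref{Kein}. Unwinding the definitions yields $\BE{K\bu}{\bet} = \lambda^{\bphi}_i\rsp{\bu}{\bet}{\bphi}$, which is symmetric in $\bu,\bet$, so $K$ is self-adjoint on $(\Hd,\BE{\cdot}{\cdot})$; since $K$ factors through the compact embedding $\Hd\hookrightarrow\Lzd$, it is additionally compact.

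With these definitions, equation \eqref{wdg} rewrites as $(I-K)\bu = \tilde{\mathcal{T}}\B{f}$, and the Fredholm alternative for compact self-adjoint operators tells us that the range of $I-K$ equals the $\BE{\cdot}{\cdot}$-orthogonal complement of $\ker(I-K)$. Observing that $\bu\in\ker(I-K)$ precisely means $\BE{\bu}{\bet} = \lambda^{\bphi}_i\rsp{\bu}{\bet}{\bphi}$ for all $\bet\in\Hd$, i.e., $\bu$ lies in the eigenspace of \eqref{WEstate} at $\lambda^{\bphi}_i$, the simplicity hypothesis forces $\ker(I-K) = \langle\bw^{\bphi}_i\rangle_{\text{span}}$. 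Hence \eqref{wdg} is solvable if and only if $\BE{\tilde{\mathcal{T}}\B{f}}{\bw^{\bphi}_i}=0$, which by the defining property of $\tilde{\mathcal{T}}$ is exactly $\langle \B{f},\bw^{\bphi}_i\rangle_{H^{-1},H^1}=0$, as claimed.

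For the structure of the solution set, I would take any particular solution $\bu$ and set $\alpha := \rsp{\bu}{\bw^{\bphi}_i}{\bphi}$ and $\bu^{\perp}:=\bu-\alpha\bw^{\bphi}_i$. Since $\bw^{\bphi}_i\in\ker(I-K)$, $\bu^{\perp}$ is still a solution, and since $\bw^{\bphi}_i$ is $\Lzrp$-normalized, $\bu^{\perp}$ lies in $\Hd\cap\langle\bw^{\bphi}_i\rangle_{\text{span}}^{\perp,\Lzrp}$. Any two such solutions would differ by a scalar multiple of $\bw^{\bphi}_i$ that is itself $\Lzrp$-orthogonal to $\bw^{\bphi}_i$, forcing the difference to vanish, and every other solution differs from $\bu^{\perp}$ by an element of $\ker(I-K)$ and is therefore of the form $\bu^{\perp}+\alpha\bw^{\bphi}_i$ for some $\alpha\in\R$.

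The argument is conceptually clean; the only real hurdle will be the bookkeeping involving three distinct pairings (the $\C(\bphi)$-weighted inner product on $\Hd$, the $\rho(\bphi)$-weighted $L^2$ inner product, and the $H^{-1},H^1$ duality), and making sure that the orthogonality conditions produced by the Fredholm alternative translate faithfully into the stated solvability condition on $\B{f}$ and the $\Lzrp$-orthogonality normalisation of $\bu^{\perp}$. The simplicity hypothesis on $\lambda^{\bphi}_i$ enters at exactly one point, namely the identification of $\ker(I-K)$ as one-dimensional, and without it the whole statement would collapse.
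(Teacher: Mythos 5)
Your argument is correct and rests on the same pillars as the paper's proof: both reduce \eqref{wdg} to a resolvent-type equation for a compact, self-adjoint operator, identify the kernel at $\lambda_i^{\bphi}$ with the one-dimensional eigenspace via the simplicity hypothesis, and then invoke the Fredholm alternative plus the Riesz-type identity $\BE{\tilde{\mathcal T}\B f}{\bw_i^{\bphi}}=\DP{\B f}{\bw_i^{\bphi}}$ (the paper's \eqref{eqn3}) to translate the abstract solvability criterion into the stated condition on $\B f$. The genuine difference is where the Fredholm alternative is applied: you set up $K:\Hd\to\Hd$ and work with the inner product $\BE{\cdot}{\cdot}$ on $\Hd$ itself, whereas the paper instead works on the dual space $\HD$, defines the solution operator $L^{-1}:\HD\to\Hd\hookrightarrow\HD$, and introduces the auxiliary inner product $(\B g,\B h)_{L^{-1}}=\bBE{L^{-1}\B g}{L^{-1}\B h}$ to make $L^{-1}$ self-adjoint and $Id-\lambda_i^{\bphi}L^{-1}$ a Fredholm operator on $\HD$. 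Since $L^{-1}$ is an isometric isomorphism between $(\HD,(\cdot,\cdot)_{L^{-1}})$ and $(\Hd,\BE{\cdot}{\cdot})$, the two operator equations are conjugate, and the proofs are equivalent at the level of content; your version sidesteps the construction of the auxiliary norm on $\HD$, which makes the self-adjointness and the translation of the orthogonality condition a little more transparent. The handling of the affine structure of the solution set (splitting off the $\Lzrp$-component along $\bw_i^{\bphi}$) is the same as the paper's projection $P_i^{\bphi}$.

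One small notational slip: you define $K\bu=\lambda_i^{\bphi}\,\tilde{\mathcal T}\big(\iota(\rho(\bphi)\bu)\big)$ with $\iota$ taken to be the embedding from \eqref{Kein}, but that embedding already carries the weight $\rho(\bphi)$, i.e.\ $\iota(\B v)(\bet)=\rsp{\B v}{\bet}{\bphi}$; composing it with a further multiplication by $\rho(\bphi)$ would produce $\int_{\Omega}\rho(\bphi)^2\bu\cdot\bet$. You should either write $K\bu=\lambda_i^{\bphi}\,\tilde{\mathcal T}(\iota(\bu))$ with $\iota$ from \eqref{Kein}, or keep $\rho(\bphi)\bu$ inside but use the unweighted embedding $\Lzd\hookrightarrow\HD$. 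In either case the identity $\BE{K\bu}{\bet}=\lambda_i^{\bphi}\rsp{\bu}{\bet}{\bphi}$ that you actually use in the rest of the argument is the correct one, so this is a typo in the definition rather than an error in the proof.
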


\begin{proof}
	Suppose that $\B{f}\in \HD$. Then, $\bu\in \Hd$ is a solution of $\eqref{wdg}$ if and only if 
	\begin{align*}
	   \BE{\bu}{\bet}=\DP{\lambda^{\bphi}_i\bu+\B{f}}{\bet},
	\end{align*}
	for all $\bet\in \Hd$. 	
	As
	\begin{align*}
	   \BE{\cdot}{\cdot}:\Hd\times \Hd\to \R,
	\end{align*}
	is a continuous, coercive bilinear form, we are able to define a continuous and compact operator 
	\begin{align*}
	   L^{-1}:\HD\to \Hd\hookrightarrow\HD,\quad \B{g}\mapsto \DB{u}{g},
	\end{align*}
	that maps any right-hand side $\B{g}\in \HD$ onto its unique solution $\DB{u}{g}\in \Hd$ of 
	\begin{align}\label{Ldef}
	   \BE{\DB{u}{g}}{\bet}=\DP{\B{g}}{\bet},
	\end{align}
	for all $\bet\in \Hd$.
	
	In the following we understand $\HD$ as a Hilbert space endowed with the scalar product 
	\begin{align*}
	   \big(\B{g},\bh\big)_{L^{-1}}\coloneqq \bBE{L^{-1}\B{g}}{L^{-1}\bh}.
	\end{align*}
	Indeed, this defines a scalar product as $L^{-1}$ is injective. Note that due to \eqref{Ldef} and the the fact that $\BE{\cdot}{\cdot}$ is a scalar product on $\Hd$, the norm induced by $(\cdot,\cdot)_{L^{-1}}$ is equivalent to the canonical operator norm on $\HD$, which guarantees completeness of this space with respect to this new scalar product.
	
	In the following, we write $R(\cdot)$ and $N(\cdot)$ denote the range and the null space of a linear operator, respectively.
	It is easy to see that $L^{-1}$ is self-adjoint with respect to this scalar product. Furthermore the following equivalences are follow by a straightforward computation: 
	\begin{align} \label{eqn0}
    	\begin{aligned}
        	&\exists \bu\in \Hd \;\text{that solves}\; \eqref{wdg},\\
        	&\quad\Leftrightarrow\quad
        	   \exists \bu\in \Hd:\;\;
        	   \bu-\lambda^{\bphi}_i L^{-1}\bu=L^{-1}\B{f},\\
        	&\quad\Leftrightarrow\quad
        	   L^{-1}\B{f} \in R(Id-\lambda^{\bphi}_i L^{-1}).
    	\end{aligned}
	\end{align}
	Since $L^{-1}$ is compact, we have that
	\begin{align*}
	   Id-\lambda^{\bphi}_i L^{-1}: \HD\to\HD,
	\end{align*}
	is a Fredholm operator. In particular, we thus know that
	\begin{align*}
	   R(Id-\lambda^{\bphi}_i L^{-1})\subset \HD,
	\end{align*}
	is closed and 
	\begin{align*}
    	   R(Id-\lambda^{\bphi}_i L^{-1})
    	=
	       N(Id-\lambda^{\bphi}_i L^{-1})^{\perp,\HD}.
	\end{align*}
	Since $L^{-1}$ is self-adjoint, we infer that
	\begin{align} \label{eqn1}
    	\begin{aligned}
        	   L^{-1}\B{f}\in R(Id-\lambda^{\bphi}_i L^{-1})
        	&\quad\Leftrightarrow\quad
        	   \forall \B{v}\in N(Id-\lambda^{\bphi}_i L^{-1}):\; \bLP{L^{-1}\B{f}}{\B{v}}=0 ,\\
        	&\quad\Leftrightarrow\quad
        	   \forall \B{v}\in N(Id-\lambda^{\bphi}_i L^{-1}):\; \bLP{\B{f}}{L^{-1}\B{v}}=0 .
    	\end{aligned}
	\end{align}
	It further holds that
	\begin{align*}
    	\begin{aligned}
        	   & \B{v}\in N(Id-\lambda^{\bphi}_i  L^{-1}) \\
        	   &\quad\Leftrightarrow\quad \forall \bet \in\Hd:\;\;
        	   \lambda^{\bphi}_i\bDP{L^{-1}\B{v}}{\bet}
    	   =
    	       \bDP{\B{v}}{\bet}\\
    	&\quad\Leftrightarrow\quad \forall \bet \in\Hd:\;\;
    	       \lambda^{\bphi}_i\brsp{L^{-1}\B{v}}{\bet}{\bphi}
    	   =
    	       \bBE{L^{-1}\B{v}}{\bet} \\
    	&\quad\Leftrightarrow\quad 
    	   \text{$L^{-1}\B{v}\in \Hd$ is an eigenfunction to the eigenvalue $\lambda_i^{\bphi}$}\\
    	&\quad\Leftrightarrow\quad
    	       L^{-1}\B{v}
    	   \in 
    	       \langle
    	           \bw^{\bphi}_i
    	        \rangle_{\text{span}}\subset\Lzrp,
    	\end{aligned}
	\end{align*}
	where the last equivalence holds since $\lambda_i^{\bphi}$ was assumed to be simple and therefore, the corresponding eigenspace is one-dimensional. In view of \eqref{eqn1}, this means that
	\begin{align} \label{eqn2}
    	\begin{aligned}
        	   L^{-1}\B{f}\in R(Id-\lambda^{\bphi}_i L^{-1})
        	&\quad\Leftrightarrow\quad
        	   \forall \B{v}\in N(Id-\lambda^{\bphi}_i L^{-1}):\; \bLP{\B{f}}{L^{-1}\B{v}}=0 \\
        	&\quad\Leftrightarrow\quad
        	   \LP{\B{f}}{\bw^{\bphi}_i} = 0.
    	\end{aligned}	
	\end{align} 
	We further know that 
	\begin{align*}
	   L^{-1}\bw^{\bphi}_i= \frac{1}{\lambda^{\bphi}_i} \bw^{\bphi}_i\in \Hd.
	\end{align*}
	Hence, since $L^{-1}\B{f}$ is a solution of \eqref{Ldef}, we have
	\begin{align} \label{eqn3}
    	\begin{aligned}
    	       \LP{\B{f}}{\bw^{\bphi}_i}
    	   &=
    	       \BE{L^{-1}\B{f}}{L^{-1}\bw^{\bphi}_i}\\
    	   &=
    	       \DP{\B{f}}{L^{-1}\bw^{\bphi}_i} 
    	    =
    	       \frac{1}{\lambda^{\bphi}_i}
    	       \DP{\B{f}}{\bw^{\bphi}_i}.
    	\end{aligned}
	\end{align}
	Combining \eqref{eqn0}, \eqref{eqn2} and \eqref{eqn3}, we conclude that
	\begin{align*}
    	&\exists\, \bu\in \Hd \;\text{that solves}\; \eqref{wdg} \\
    	&\quad\Leftrightarrow\quad	
    	   L^{-1}\B{f}\in R(Id-\lambda^{\bphi}_i L^{-1}) \\
    	&\quad\Leftrightarrow\quad 
    	   \DP{\B{f}}{\bw^{\bphi}_i}=0.
	\end{align*}
	This proves the first assertion.
	
	Let us now assume that $\DP{\B{f}}{\bw^{\bphi}_i}=0$ and let
	\begin{align*}
    	P^{\bphi}_i: 
    	   \Lzrp 
    	\to 
    	   \langle	\bw^{\bphi}_i \rangle_{\text{span}}\subset \Lzrp,
	\end{align*}
	denote the orthogonal projection onto the linear subspace $\langle \bw^{\bphi}_i \rangle_{\text{span}}$ with respect to the scalar product on $\Lzrp$. For any solution $\bu\in \Hd$ of $\eqref{wdg}$ we obtain from the decomposition $\bu=\left(\bu-P^{\bphi}_i(\bu)\right)+P^{\bphi}_i(\bu)$ that
	\begin{align*}
    	   \langle\B{f},\bet \rangle_{H^{-1},H^1}
    	&=
        	   \bBE{\bu-P^{\bphi}_i(\bu)}{\bet}
        	-
        	   \lambda^{\bphi}_i
        	   \brsp{\bu-P^{\bphi}_i(\bu)}{\bet}{\varphi}.
	\end{align*}
	Hence, it also holds that 
	\begin{align*}
    	   \bu^{\perp}
    	\coloneqq 
    	   \bu-P^{\bphi}_i(\bu)
    	\in 
    	   \Hd\cap\langle
    	       \bw^{\bphi}_i 
    	   \rangle_{\text{span}}^{\perp,\Lzrp}
	\end{align*}
	fulfills equality $\eqref{wdg}$. Uniqueness of the solution $\bu^\perp$ follows from the simplicity of $\lambda_{i}^{\bphi}$ and the linearity of equation $\eqref{wdg}$. In particular, any solution $\bu\in\Hd$ can be expressed as
	\begin{align*}
	   \bu = \bu^\perp + P^{\bphi}_i(\bu) = \bu^\perp + \alpha \bw_i^\bphi
	\end{align*}
	for some $\alpha\in\R$. This completes the proof.
\end{proof}

\pagebreak[2]

We can now use the Fredholm alternative to prove Theorem~\ref{difewv}.
\begin{proof}[Proof of Theorem~\ref{difewv}]
	As mentioned above, we intend to apply the implicit function theorem to prove the assertion. To this end, we define the operator
	\begin{align*}
    	F: 
    	   \big(\HN\cap\LuN\big) \times \Hd \times \R
    	&\to 
    	   \HD\times\R,\\
    	   (\btheta,\bw,\lambda)
    	&\mapsto
    	   \begin{pmatrix}
    	       -\nabla\cdot \C(\btheta)\E(\bw)
    	       -\lambda \rho(\btheta)\bw\\
    	       \rsp{\bw}{\bw}{\vartheta}-1
    	\end{pmatrix}.
	\end{align*}
	Here we canonically understand the first component of the right-hand side as an element of $\HD$, i.e.,
	\begin{align*}
        	\langle 
        	   F_1(\btheta,\bw,\lambda),\bet
        	\rangle_{H^{-1},H_D^1}
    	=
    	     \BET{\bw}{\bet}{\btheta}
    	   -
    	       \lambda\rsp{\bw}{\bet}{\vartheta},
	\end{align*}
	for all $\bet\in \Hd$. First of all, it is clear that $\bphi\in \HN\cap\LuN$, the eigenvalue $\lambda_i^{\bphi}\in \R$ and the corresponding representative $\bw_i^{\bphi}\in \Hd$ satisfy
	\begin{align*}
	   F\left(\bphi,\bw^{\bphi}_i,\lambda^{\bphi}_i\right)=\B{0}.
	\end{align*}	
	To apply the implicit function theorem, we need to show that $F$ is of class $C^1$ on a suitable neighbourhood of $\left(\bphi,\bw^{\bphi}_i,\lambda^{\bphi}_i\right)$. For this purpose, we show that all partial Fr\'echet derivatives are continuous at any point in the domain of definition of $F$. 
	Formally computing the partial derivatives
	at a point
	\begin{align*}
	   (\btheta,\bw,\lambda)\in\big(\HN\cap\LuN\big)\times \Hd\times \R,
	\end{align*}
	in the direction
	\begin{align*}
	   (\bh,\bu,\mu)\in \big(\HN\cap\LuN\big)\times \Hd\times \R,
	\end{align*}
	gives
	\begin{subequations}
        \begin{align}
    	       \partial_{\btheta}F_1(\btheta,\bw,\lambda)\bh
    	   &=
            	-\nabla\cdot \C^{\prime}(\btheta)\bh\E(\bw)
            	-\lambda \rho^{\prime}(\btheta)\bh\bw,\label{darpa}\\
            	\partial_{\bw}F_1(\btheta,\bw,\lambda)\bu
    	   &=
            	-\nabla\cdot \C(\btheta)\E(\bu)
            	-\lambda \rho(\btheta)\bu,\label{darpb}\\
            	\partial_{\lambda}F_1(\btheta,\bw,\lambda)\mu
    	   &=
            	-\mu \rho(\btheta)\bw,\label{darpc}\\
            	\partial_{\btheta}F_2(\btheta,\bw,\lambda)\bh
    	   &=
            	\int_{\Omega}\rho^{\prime}(\btheta)\bh\abs{\bw}^2\text{\,d}x,\label{darpd}\\
            	\partial_{\bw}F_2(\btheta,\bw,\lambda)\bu
    	   &=
            	2\int_{\Omega}\rho(\btheta)\bw\cdot\bu\text{\,d}x,\label{darpe}\\
            	\partial_{\lambda}F_2(\btheta,\bw,\lambda)\mu
    	   &=0,\label{darpf}
       \end{align}
	\end{subequations}
    where the first two identities are to be understood in a weak sense.
	We can rigorously prove that the above expressions are actually the partial Fr\'echet derivatives. 
	Here, we will present a detailed proof only for \eqref{darpa} as all other derivatives can be verified analogously.
	We first notice that for any fixed $(\btheta,\bw,\lambda)\in \HN\cap\LuN\times \Hd\times \R$, it holds that
	\begin{align}\label{partL}
    	\begin{aligned}
        	   \Big[\bh \mapsto
        	       \partial_{\btheta}F_1(\btheta,\bw,\lambda)\bh
        	   \Big] 
    	   \in 
    	       \mathcal{L}\left(\HN\cap\LuN, \HD\right).
    	\end{aligned}
	\end{align}
	Indeed, the linearity of the above mapping is clear and the assumptions on $\C,\rho$ along with Hölder's inequality imply the existence of a constant $C>0$ such that
	\begin{align*}
    	&\norm{
    		  \partial_{\btheta}F_1(\btheta,\bw,\lambda)
    	    }_{\mathcal{L}\left(H^1\cap L^{\infty}, H^{-1}\right)} \\
    	&\quad=
    	   \underset{\norm{\bh}_{H^1\cap L^{\infty}}=1}{\sup}\;\;
    	   \underset{\norm{\bet}_{H^1_D}=1}{\sup}\;
    	       \abs{	\left\langle
    		      \E(\bw),\E(\bet)
    		      \right\rangle_{\C^{\prime}(\btheta)\bh}
    		  +
    		      \lambda\int_{\Omega}\rho^{\prime}(\btheta)\bh\bw\cdot\bet\text{\,d}x
    	           }\\
    	&\quad\le 
    	   C\norm{\bw}_{\Hd}.
	\end{align*}
	It further holds that
	\begin{align*}
    	&
        	\norm{
        		F_1(\btheta+\bh,\bw,\lambda)
        		-F_1(\btheta,\bw,\lambda)
        		-\partial_{\btheta}F_1(\btheta,\bw,\lambda)\bh
        	}_{\HD}\\
    	&\quad\le
        	\underset{\norm{\bet}_{\Hd}=1}{\sup}
        	\abs{
        		\langle 	
        		\E(\bw),\E(\bet)
        		\rangle_{\C(\btheta+\bh)}
        		-\BET{\bw}{\bet}{\btheta}
        		-\langle 
        		\E(\bw),\E(\bet)
        		\rangle_{\C^{\prime}(\btheta)\bh}
        	}\\
    	&\qquad +
        	\underset{\norm{\bet}_{\Hd}=1}{\sup}
        	\abs{\lambda}
        	\int_{\Omega}\abs{\rho(\btheta+\bh)-\rho(\btheta)-\rho^{\prime}(\btheta)\bh}\abs{\bw\cdot \bet}\text{\,d}x.
	\end{align*}
	Now, proceeding similarly as in \cite[Proof of Thm~3.3]{Blank}, we invoke the differentiability properties of $\C$ and $\rho$ to conclude that
	\begin{align*}
        	\norm{
        		\C(\btheta+\bh)
        		-\C(\btheta)
        		-\C^{\prime}(\btheta)\bh
        	}_{\LuN}
    	&\in 
        	o\left(\norm{\bh}_{\LuN}\right),\\
        	\norm{
        		\rho(\btheta+\bh)
        		-\rho(\btheta)
        		-\rho^{\prime}(\btheta)\bh}_{\LuN}
    	&\in 
    	   o\left(\norm{\bh}_{\LuN}\right).
	\end{align*}
	Hence, Hölder's inequality yields
	\begin{align*}
        	\norm{
        		F_1(\btheta+\bh,\bw,\lambda)
        		-F_1(\btheta,\bw,\lambda)
        		-\partial_{\btheta}F_1(\btheta,\bw,\lambda)\bh
        	}_{\HD}
    	\in 
    	   o\left(\norm{\bh}_{\HN\cap\LuN}\right),
	\end{align*}
	which proves that $\partial_{\btheta}F_1(\btheta,\bw,\lambda)$ is indeed the partial derivative of $F_1$ with respect to $\btheta$ in the Fr\'echet sense.
	
	It remains to prove the continuity of the partial Fr\'echet derivatives. Here we also present the proof only for \eqref{darpa} as the continuity of the other partial derivatives can be established similarly. Let $(\btheta_n,\bw_n,\lambda_n)_{n\in\N}$ denote any sequence in $\big(\HN\cap\LuN\big)\times \Hd\times \R$ satisfying
	\begin{align*}
    	   \left(\btheta_n,\bw_n,\lambda_n\right)
    	\to 
    	   \left(\btheta,\bw,\lambda\right)
    	\quad\text{in }
    	   \big(\HN\cap\LuN\big)\times \Hd\times \R,
	\end{align*}
	for $n\to \infty$. Then it holds that
	\begin{align*}
    	&
        	\underset{\norm{\bh}_{H^1\cap L^{\infty}}=1}{\sup}
        	\norm{
        		\partial_{\btheta}F_1(\btheta_n,\bw_n,\lambda_n)\bh
        		-\partial_{\btheta}F_1(\btheta,\bw,\lambda)\bh
        	}_{\HD}
        	\\[1ex]
    	&\quad \le 
        	\underset{\norm{\bh}_{H^1\cap L^{\infty}}=1}{\sup}\;\;
        	\underset{\norm{\bet}_{H^1_D}=1}{\sup}\;
        	\abs{
        		\langle 
        		\E(\bw_n),\E(\bet)
        		\rangle_{\C^{\prime}(\btheta_n)\bh}
        		-\langle
        		\E(\bw),\E(\bet)
        		\rangle_{\C^{\prime}(\btheta)\bh}
        	}
        	\\
    	&\qquad +
        	\underset{\norm{\bh}_{H^1\cap L^{\infty}}=1}{\sup}\;\;
        	\underset{\norm{\bet}_{H^1_D}=1}{\sup}\;
        	\abs{\int_{\Omega}
        		\lambda_n\rho^{\prime}(\btheta_n)\bh\bw_n
        		\cdot
        		\bet\text{\,d}x
        		-\int_{\Omega}
        		\lambda\rho^{\prime}(\btheta)\bh\bw
        		\cdot
        		\bet\text{\,d}x
        	}.
	\end{align*}
	Applying Hölder's inequality, and recalling the differentiability of $\C$, we infer that
	\begin{align*}
    	&
        	\abs{\langle 
        		\E(\bw_n),\E(\bet)
        		\rangle_{\C^{\prime}(\btheta_n)\bh}
        		-\langle
        		\E(\bw),\E(\bet)
        		\rangle_{\C^{\prime}(\btheta)\bh}
        	}\\[1ex]
    	&\quad\le
        	\abs{\langle 	
        		\E(\bw_n),\E(\bet)
        		\rangle_{\C^{\prime}(\btheta_n)\bh}
        		-\langle
        		\E(\bw_n),\E(\bet)
        		\rangle_{\C^{\prime}(\btheta)\bh}
        	}\\
    	&\qquad
        	+\abs{\langle 
        		\E(\bw_n),\E(\bet)
        		\rangle_{\C^{\prime}(\btheta)\bh}
        		-\langle
        		      \E(\bw),\E(\bet)
        		\rangle_{\C^{\prime}(\btheta)\bh}}\\[1ex]
    	&\quad\le 
        		C\left(
        		  \norm{\btheta_n-\btheta}_{H^1\cap L^{\infty}}\norm{\bw_n}_{\Hd}
        		+\norm{\bw_n-\bw}_{\Hd}
        		\right) \\
    	&\qquad \cdot
    		  \norm{\bh}_{H^1\cap L^{\infty}}\norm{\bet}_{\Hd}.
	\end{align*}
	Arguing similarly for the second summand and using the differentiability properties of $\rho$, we obtain
	\begin{align*}
	&
    	\abs{
    		\int_{\Omega}
    		  \lambda_n\rho^{\prime}(\btheta_n)\bh\bw_n
    		  \cdot
    		\bet\text{\,d}x
    		-\int_{\Omega}
    		  \lambda\rho^{\prime}(\btheta)\bh\bw
    		  \cdot
    		\bet\text{\,d}x
    	}
    	\\[1ex]
	&\quad\le
    	\abs{
    		\lambda_n
    		\int_{\Omega}
    		      \rho^{\prime}(\btheta_n)\bh\bw_n
    		  \cdot
    		      \bet
    		      -\rho^{\prime}(\btheta)\bh\bw_n
    		  \cdot
    		\bet\text{\,d}x
    	}
    	+\abs{
    		  \lambda_n
    		  -\lambda 
    		  \int_{\Omega}
    		          \rho^{\prime}(\btheta)\bh\bw_n
    		      \cdot
    		          \bet\text{\,d}x
    	   }
    	\\
	&\qquad
    	+\abs{
    		  \lambda\int_{\Omega}
    		      \rho^{\prime}(\btheta)\bh\bw_n
    		  \cdot
    		      \bet
    		  -\rho^{\prime}(\btheta)\bh\bw
    		      \cdot
    		  \bet\text{\,d}x
    	}
    	\\[1ex]
	&\quad\le
    	   C\Big(
    	       \norm{\btheta_n-\btheta}_{H^1\cap L^{\infty}}
    	       \norm{\bw_n}_{\Hd} 
    	       +\abs{\lambda_n-\lambda}
    	       \norm{\bw_n}_{\Hd}
    \\
	&\qquad\qquad
    	       +\norm{\bw_n-\bw}_{\Hd}
    	    \Big)
    	       \norm{\bh}_{H^1\cap L^{\infty}}
    	       \norm{\bet}_{\Hd}.
	\end{align*}
	Hence, after taking the suprema, we conclude that
	\begin{align*}
    	\norm{
    		  \partial_{\btheta}F_1(\btheta_n,\bw_n,\lambda_n)
    		  -\partial_{\btheta}F_1(\btheta,\bw,\lambda)
    	}_{\mathcal{L}\left(\HN\cap\LuN, \HD\right)}
    	\to 0,
	\end{align*}
	as $n\to\infty$.
	In summary, this implies that
	\begin{align*}
	   F:\HN\cap\LuN\times \Hd \times\R\to \HD\times \R,
	\end{align*}
	is continuously Fr\'echet differentiable.
	
	We next need to show that the partial derivative
	\begin{align*}
    	\partial_{(\bw,\lambda)}F(\bphi,\bw^{\bphi}_i,\lambda^{\bphi}_i):
    	\Hd \times \R\to \HD\times \R
	\end{align*}
	is an isomorphism.
	As $\partial_{(\bw,\lambda)}F$ is a linear operator, it suffices to show that its kernel is trivial. To this end, we consider
	\begin{align*}
        	\partial_{\bw}
        	F(\bphi,\bw^{\bphi}_i,\lambda^{\bphi}_i)\bu
    	+\partial_{\lambda}
    	   F(\bphi,\bw^{\bphi}_i,\lambda^{\bphi}_i)\mu
    	=
    	   \partial_{(\bw,\lambda)}
    	   F(\bphi,\bw^{\bphi}_i,\lambda^{\bphi}_i)(\bu,\mu)
    	=\B{0},
	\end{align*}
	which is equivalent to the equations
	\begin{align}\label{sko}
    	\BE{\bu}{\bet}
    	-\lambda^{\bphi}_i\rsp{\bu}{\bet}{\varphi}
    	-\mu\rsp{\bw^{\bphi}_i}{\bet}{\varphi}
    	=0,
	\end{align}
	for all $\bet\in \Hd$ and
	\begin{align}\label{zko}
	   2\rsp{\bu}{\bw^{\bphi}_i}{\varphi}=0.
	\end{align}
	Testing $\eqref{sko}$ with $\bet=\bw^{\bphi}_i\in \Hd$ immediately yields $\mu=0$,
	as $\bw^{\bphi}_i$ is an $\Lzrp$-normalized eigenfunction. As $\lambda_i^{\bphi}$ is assumed to be simple, we obtain from $\eqref{zko}$ that $\bu=\B{0}$. This shows that the operator is injective.
	
	To verify surjectivity we take an arbitrary tuple $(\B{f},\kappa)\in \HD\times \R$ and we need to show that there exists a solution $(\bu,\mu)\in \Hd\times \R$ of the system
	\begin{align}\label{Impsyst}
    	\begin{split}
            	\BE{\bu}{\bet}
            	-\lambda^{\bphi}_i
            	\rsp{\bu}{\bet}{\varphi}
            	-\mu\rsp{\bw^{\bphi}_i}{\bet}{\varphi}
        	&=
            	\langle\B{f},\bet \rangle_{H^{-1},H^1},\\
            	2\rsp{\bu}{\bw^{\bphi}_i}{\varphi}
        	&=\kappa,
    	\end{split}	
	\end{align}
	for all $\bet\in \Hd$. Choosing $\mu=-\langle\B{f},\bw^{\bphi}_i \rangle_{H^{-1},H^1}$, we infer that a solution $\bu\in \Hd$ needs to fulfill
	\begin{align}\label{Laximp}
        	\BE{\bu}{\bet}
        	-\lambda^{\bphi}_i
        	\rsp{\bu}{\bet}{\varphi}
    	&= 
        	\langle\B{f},\bet \rangle_{H^{-1},H^1}
        	-\langle\B{f},\bw^{\bphi}_i \rangle_{H^{-1},H^1}
        	\rsp{\bw^{\bphi}_i}{\bet}{\varphi},
	\end{align}
	for all $\bet\in \Hd$. Testing again with the normalized eigenfunction $\bw_i^{\bphi}\in \Hd$, we deduce that
	\begin{align*}
        	\langle\B{f},\bw^{\bphi}_i \rangle_{H^{-1},H^1}
        	-\langle\B{f},\bw^{\bphi}_i \rangle_{H^{-1},H^1}
        	\rsp{\bw^{\bphi}_i}{\bw^{\bphi}_i}{\varphi}
    	=
    	   0.
	\end{align*}
	Hence, Lemma~\ref{Freset} implies the existence of a function $\bu^{\perp}\in \Hd\cap \langle \bw^{\bphi}_i\rangle _{\text{span}}^{\perp,\Lzrp}$, such that any solution of $\eqref{Laximp}$ can be written as $\bu^{\perp}+\alpha \bw^{\bphi}_i\in \Hd$ and vice versa.
	Using the second equation of $\eqref{Impsyst}$, we finally conclude that
	\begin{align}\label{bij}
    	\left(
    	   \bu^{\perp}+\frac{\kappa}{2}\bw^{\bphi}_i,
    	   -\langle \B{f},\bw^{\bphi}_i \rangle 
    	\right)\in \Hd\times \R,
	\end{align}
	is a solution of $\eqref{Impsyst}$.
	
	In summary, this proves that
	\begin{align*}
    	\partial_{(\bw,\lambda)}
    	F(\bphi,\bw^{\bphi}_i,\lambda^{\bphi}_i): 
    	\Hd \times \R\to \HD\times \R
	\end{align*}
	is bijective, and thus an isomorphism.
	
	As now all requirements are verified, the implicit function theorem can be applied to the equation $F\left(\bphi,\bw^{\bphi}_i,\lambda^{\bphi}_i\right)=\B{0}$. It implies that there exists a radius $r_0>0$ such that the mapping
	\begin{align*}
        	S^{\bphi}_{i}: 
        	B_{r_0}(\bphi)\subset 
        	\HN\cap\LuN 
    	&\to
        	B_{r_i^\bphi}
        	\big(
        	   (\DB{w}{\varphi}_i,\lambda_i^{\bphi})
        	\big)
        	\subset\Hd\times \R,
	\end{align*}
	is well-defined, continuously Fr\'echet differentiable and satisfies
	\begin{align*}
		F\big(\tilde{\bphi},S^{\bphi}_i(\tilde{\bphi})\big)=\B{0},
	\end{align*}
	for all $\tilde{\bphi}\in B_{r_0}(\bphi)$. This means that
	\begin{align}\label{ewpr}
    	\begin{split}
        	   \left\langle
                    \mathcal{E}\left({S^{\bphi}_{i,1}(\tilde{\bphi})}\right)
                    \mathcal{E}\left({\bet}\right)
                \right\rangle_{\mathbb{C}(\tilde{\bphi})}    
        	&=
        	   S^{\bphi}_{i,2}(\tilde{\bphi})
        	   \int_{\Omega}
        	           \rho(\tilde{\bphi})S^{\bphi}_{i,1}(\tilde{\bphi})
        	       \cdot 
        	           \bet\text{\,d}x,\\
        	   \int_{\Omega}
        	   \rho(\tilde{\bphi})\big|S^{\bphi}_{i,1}(\tilde{\bphi})\big|^2\text{\,d}x
    	   &=1,
	\end{split}
	\end{align}
	for all $\bet\in \Hd$, and thus, $S^{\bphi}_{i,1}(\tilde{\bphi})\in \Hd$ is a $L^2_{\tilde{\bphi}}(\Omega,\R^d)$-normalized eigenfunction to the eigenvalue $S^{\bphi}_{i,2}(\tilde{\bphi})$.
	
	However, it is still not clear whether the eigenvalues $S^{\bphi}_{i,1}(\tilde{\bphi})$ and $\lambda^{\tilde{\bphi}}_i$ are actually identical.
	By construction, it holds that         $S_i^{\bphi}(\bphi)=\left(\DB{w}{\varphi}_i,\lambda_i^{\bphi}\right)$. 
	We now recall that, according to Lemma~\ref{llip} and Lemma~\ref{dwko}, both $\bphi\mapsto\lambda_i^{\bphi}$ and $\bphi\mapsto\bw_i^\varphi$ are continuous on $\HL$. In combination with \eqref{ewpr}, we conclude that there exists a radius $\delta_i^\bphi\in (0,r_0]$ such that 
	\begin{align}\label{rabs}
		  \big(\bw^{\tilde{\bphi}}_i,\lambda^{\tilde{\bphi}}_i\big)
		= S^{\bphi}_{i}(\tilde{\bphi})
		  \quad\text{for all}\; \tilde\bphi\in B_{r_i^\bphi}(\bphi).
	\end{align}
	Via restriction to the ball $B_{\delta_i^\bphi}(\bphi)$, we can thus rewrite the operator $S_i^\bphi$ as
	\begin{align*}
        	S^{\bphi}_i: 
        	B_{\delta_i^\bphi}(\bphi)
        	\subset 
        	\HN\cap\LuN
    	&\to 
        	\Hd\times \R,\\
        	\tilde{\bphi}
    	&\mapsto 
    	   \big(\bw^{\tilde{\bphi}}_i,\lambda^{\tilde{\bphi}}_i\big).
	\end{align*}
	
	It remains to compute the Fr\'echet derivative of $S_i^{\bphi}$ at the point $\bphi \in B_{\delta_i^\bphi}(\bphi)$, which means computing the desired Fr\'echet derivatives of the $i$-th eigenvalue and the corresponding eigenfunction with respect to $\bphi$.
	
	To this end, let $\bphi \in B_{\delta_i^\bphi}(\bphi)$ be arbitrary. Using the chain rule, we conclude that the Fr\'echet derivative 
	\begin{align*}
		(S_i^\bphi)'\bh =  \Big( (\bw^\bphi_i)^{\prime} \bh , (\lambda^\bphi_i)^{\prime} \bh \Big),
	\end{align*}
	satisfies the equation
	\begin{align}
	\label{EQ:FD}
    		  \partial_{(\bw,\lambda)}
    		  F(\bphi,S_i(\bphi)) 
    		  \Big( (\bw^\bphi_i)^{\prime} \bh , (\lambda^\bphi_i)^{\prime} \bh \Big) 
    		= -\partial_{\bphi}F(\bphi,S_i(\bphi))\bh
    		  \quad\text{in}\; \HD\times \R
	\end{align} 
	for any direction $\bh\in \HL$.
	With the partial derivatives computed in $\eqref{darpa}$-$\eqref{darpf}$ we obtain
	\begin{align*}
    	   \partial_{\bphi}F(\bphi,S_i(\bphi))\bh
    	=
    	   \begin{pmatrix}
    	       -\nabla\cdot
    	           \left[
    	               \C^{\prime}(\bphi)\bh\E(\bw^{\bphi}_i)
    	           \right]
    	       -\lambda^{\bphi}_i\rho^{\prime}(\bphi)\bh\bw^{\bphi}_i\\[1ex]
    	       \int_{\Omega}\rho^{\prime}(\bphi)\bh\abs{\bw^{\bphi}_i}^2\text{\,d}x
    	   \end{pmatrix}
    	\in \HD\times \R,
	\end{align*}
	for all $\bh\in \HL$, and
	\begin{align*}
    	\begin{aligned}
        	   &\partial_{(\bw,\lambda)}F\big(\bphi,S_i(\bphi)\big)(\bu,\mu)
        	=
            	\partial_{\bw}F\big(\bphi,S_i(\bphi)\big)\bu
            	+\partial_{\lambda}F\big(\bphi,S_i(\bphi)\big)\mu\\[1ex]
        	&\quad =
            	\begin{pmatrix}
            	   -\nabla\cdot \C(\bphi)\E(\bu)
            	   -\lambda^{\bphi}_i \rho(\bphi)\bu
            	   -\mu \rho(\bphi)\bw^{\bphi}_i \\[1ex]
            	   2\int_{\Omega}\rho(\bphi)\bw^{\bphi}_i\cdot\bu\text{\,d}x
            	\end{pmatrix}
        	\in \HD\times \R,
    	\end{aligned}
	\end{align*}
	for all $(\bu,\mu)\in \Hd\times \R$. 
	Consequently, \eqref{EQ:FD} is equivalent to the system \eqref{Impsyst} written for
	\begin{align*}
		      (\bu,\mu) 
		&= \Big( (\bw^\bphi_i)^{\prime} \bh , (\lambda^\bphi_i)^{\prime} \bh \Big), \\[1ex]
		      \B{f}
		&=\nabla\cdot \big[\C^{\prime}(\bphi)\bh\, \E(\bw^{\bphi}_i)\big]
			+\lambda^{\bphi}_i\rho^{\prime}(\bphi)\bh\bw^{\bphi}_i,\\[1ex]
		      \kappa 
		&= -\int_{\Omega}\rho^{\prime}(\bphi)\bh\abs{\bw^{\bphi}_i}^2\text{\,d}x. 
	\end{align*}
	Recalling the above discussion of surjectivity, we already know from \eqref{bij} that  
	$\mu=-\langle \B{f},\bw^{\bphi}_i\rangle_{H^{-1},H^1}$ 
	which directly yields
	\begin{align*}
    	   \left(\lambda^{\bphi}_i\right)^{\prime}\bh
    	=
    	   \BED{\bw^{\bphi}_i}{\bw^{\bphi}_i}
    	   -\lambda^{\bphi}_i
    	   \int_{\Omega}\rho^{\prime}(\bphi)\bh\abs{\bw^{\bphi}_i}^2\text{\,d}x.
	\end{align*}
	Plugging this into \eqref{Impsyst} with the above choices for $\bu$, $\mu$ $\B{f}$ and $\kappa$, we conclude that $(\bw^{\bphi}_i)^{\prime}\bh$ satisfies
	\begin{alignat}{2}
    	\begin{aligned}\label{chw}
        	&
            	\BE{{\left(\bw^{\bphi}_i\right)^{\prime}\bh}}{\B{\eta}}
            	-\lambda^{\bphi}_i
            	\int_{\Omega}\rho(\bphi)
            	   \left[
            	       \left(\bw^{\bphi}_i\right)^{\prime}\bh
            	   \right]
            	\cdot\B{\eta}\text{\,d}x\\
        	&\quad=	
            	-\BED{\bw^{\bphi}_i}{\B{\eta}}
            	+\lambda^{\bphi}_i
            	\int_{\Omega}\rho^{\prime}(\bphi)\bh\bw^{\bphi}_i
            	   \cdot 
            	\B{\eta}\text{\,d}x\\
        	&\qquad
            	+\left(\lambda^{\bphi}_i\right)^{\prime}\bh
            	\int_{\Omega}\rho(\bphi)\bw^{\bphi}_i
            	   \cdot
            	\B{\eta}\text{\,d}x
    	\end{aligned}
	\end{alignat}
	for all $\B{\eta}\in \Hd$, and
	\begin{align*}
		   \rsp{(\bw^{\bphi}_i)^{\prime}\bh}{\bw^{\bphi}_i}{\varphi}
        =
            \frac{\kappa}{2}
        =
            -\frac{1}{2}\int_{\Omega}\rho^{\prime}(\bphi)\bh\abs{\bw^{\bphi}_i}^2\text{\,d}x.
	\end{align*}
	This completes the proof.
\end{proof}


\section{Eigenvalue optimization} \label{SEC:OPT:1}

We can now apply the theory developed in Section~3 and Section~4 to show that the optimization problem $\eqref{Pepsla}$ (that was introduced in Subsection~\ref{SOPT}) possesses a minimizer if the set $\B{\mathcal{G}}^{\B{m}}\cap \B{U}_c$ is non-empty. Here, the assumption that the set of admissible phase-fields is non-empty is actually necessary as the sets $S_0$ and $S_1$ could be chosen in such a way that no $\bphi\in \B{U}_c$ can have the desired regularity $\HN$.

\begin{Thm}[Existence of a minimizer to \eqref{Pepsla}] \label{Exm}
	Suppose that the set $\B{\mathcal{G}}^{\B{m}}\cap \B{U}_c$ is non-empty. Then the problem $\eqref{Pepsla}$ possesses a minimizer $\overline\bphi\in \B{\mathcal{G}}^{\B{m}}\cap \B{U}_c$.
\end{Thm}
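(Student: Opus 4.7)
The plan is to establish existence by the direct method of the calculus of variations. Since $\Psi$ is bounded from below by $-c_\Psi$ and $\psi \geq 0$ (so that $E^\varepsilon(\bphi)\ge 0$ on $\B{\mathcal{G}}^{\B m}$, where the indicator part of $\psi$ vanishes), the objective $J^\varepsilon_l$ is bounded from below on the feasible set, so the infimum $I := \inf J^\varepsilon_l$ is finite (finiteness also uses that $\B{\mathcal{G}}^{\B m}\cap \B U_c$ is non-empty by assumption). Pick a minimizing sequence $(\bphi_k)_{k\in\N} \subset \B{\mathcal{G}}^{\B m}\cap \B U_c$ with $J^\varepsilon_l(\bphi_k)\to I$.

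The next step is compactness. Because $\bphi_k(\bx)\in \B G$ a.e.\ and $\B G$ is a compact subset of $\R^N$, the sequence is uniformly bounded in $\LuN$. From the boundedness of $J^\varepsilon_l(\bphi_k)$ together with $\Psi\ge -c_\Psi$, one gets a uniform bound on $E^\varepsilon(\bphi_k)$, which in turn bounds $\|\nabla \bphi_k\|_{L^2}$. Since $\Omega$ is bounded, $\|\bphi_k\|_{L^2}$ is also uniformly bounded, so $(\bphi_k)$ is bounded in $\HN$. By the Banach--Alaoglu theorem and the Rellich--Kondrachov compact embedding, after extraction of a non-relabeled subsequence,
\begin{align*}
\bphi_k \rightharpoonup \overline\bphi \ \text{in } \HN, \quad \bphi_k \to \overline\bphi \ \text{in } L^2(\Omega;\R^N) \text{ and a.e.\ in }\Omega.
\end{align*}

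Next I would check that $\overline\bphi\in \B{\mathcal{G}}^{\B m}\cap \B U_c$. The mean value constraint $\fint_\Omega \bphi_k = \B m$ passes to the limit by weak $L^2$ convergence. Pointwise a.e.\ convergence together with the closedness of $\B G$ yields $\overline\bphi(\bx)\in \B G$ for a.e.\ $\bx\in\Omega$, so $\overline\bphi\in \B{\mathcal{G}}^{\B m}$. The trace-like constraints $\varphi^N = 0$ on $S_0$ and $\varphi^N = 1$ on $S_1$ are preserved by a.e.\ convergence, so $\overline\bphi \in \B U_c$.

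Finally I would establish lower semicontinuity of the objective. For the Ginzburg--Landau energy, convexity of $|\cdot|^2$ gives weak lower semicontinuity of $\int_\Omega \tfrac{\varepsilon}{2}|\nabla\bphi|^2$, while Fatou's lemma (using continuity of $\psi_0$, uniform boundedness of $\bphi_k$ in $\LuN$ and a.e.\ convergence) handles the potential part; hence $E^\varepsilon(\overline\bphi)\le \liminf_{k\to\infty} E^\varepsilon(\bphi_k)$. For the eigenvalue term, the uniform $\LuN$-bound plus weak convergence in $\HN$ are precisely the hypotheses of Theorem~\ref{slw}, which yields $\lambda^{\bphi_k}_{i_j} \to \lambda^{\overline\bphi}_{i_j}$ for each $j = 1,\dots,l$. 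Continuity of $\Psi$ on $(\R_{>0})^l$ then gives $\Psi(\lambda^{\bphi_k}_{i_1},\dots,\lambda^{\bphi_k}_{i_l}) \to \Psi(\lambda^{\overline\bphi}_{i_1},\dots,\lambda^{\overline\bphi}_{i_l})$. Combining these,
\begin{align*}
J^\varepsilon_l(\overline\bphi) \le \liminf_{k\to\infty} J^\varepsilon_l(\bphi_k) = I,
\end{align*}
so $\overline\bphi$ is a minimizer. The only mildly delicate point is verifying that Theorem~\ref{slw} is applicable, i.e.\ that the $\LuN$-boundedness of the minimizing sequence (which comes from $\bphi_k\in\B G$ a.e.) is indeed available; all other steps are standard direct-method arguments.
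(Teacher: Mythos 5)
Your proposal is correct and follows essentially the same route as the paper's proof: the direct method, with the uniform $\LuN$-bound coming from the Gibbs simplex constraint, Theorem~\ref{slw} supplying continuity of the eigenvalue terms along the weakly convergent minimizing sequence, and weak lower semicontinuity of the Ginzburg--Landau energy. You in fact spell out a few steps the paper leaves implicit (the $\HN$-boundedness of the minimizing sequence and the admissibility of the weak limit), but the argument is the same.
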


\begin{proof}
	To proof the assertion, we apply the direct method in the calculus of variations.
	Recalling that $\Psi$ is $C^1$ and bounded from below, we first observe that the objective functional is bounded by
	\begin{align*}
	   -c_{\Psi}\le J^{\eps}(\bphi)<\infty \quad \text{for all}\;\, \mathcal{F}_{\text{ad}}=\B{\mathcal{G}}^{\B{m}}\cap \B{U}_c.
	\end{align*}
	Since $\mathcal{F}_{\text{ad}}$ is non-empty, the infimum
	\begin{align*}
	   \bar J:=\underset{\bphi\in \mathcal{F}_{\text{ad}}}{\inf}
	   J^{\eps}(\bphi)
	\end{align*}
	exists in $\R$.
	Thus, there exists a minimizing sequence $\left(\bphi_k\right)_{k\in\N}\subset \mathcal{F}_{\text{ad}}$ with $J(\bphi_k)\to \bar J$ as $k\to\infty$. Using the fact that $\left(\bphi_k\right)_{k\in \mathbb{N}}$ is bounded in $\HN$, we infer that
	\begin{align*}
	   \bphi_k&\rightharpoonup\oB{\varphi}\quad \text{in } \HN,\quad \text{as $k\to \infty$},
	\end{align*}
	along a non-relabeled subsequence.
	As the sequence $\left(\bphi_k\right)_{k\in\N}$ lies in $\B{\mathcal{G}}^{\B{m}}$ it is also bounded in $\LuN$. Hence, Theorem~\ref{slw} implies that 
	\begin{align*}
	   \lambda_{i_j}^{\bphi_k}\to \lambda_{i_j}^{\bphi},\quad \text{as $k\to \infty$},
	\end{align*}
	for all $j=1,\dots, l$.
	As $\Psi$ is continuous and the Ginzburg--Landau energy is weakly lower semi-continuous, we conclude that
	\begin{align*}
    	   J^{\eps}_l(\oB{\varphi})
    	\le 
    	   \underset{k\to\infty}{\lim\inf}J^{\eps}_l(\bphi_k) = \bar J.
	\end{align*}
	This directly implies that $J^{\eps}_l(\oB{\varphi}) = \bar J$ and thus, $\overline\bphi$ is a minimizer of the functional $J$ on the set $\mathcal{F}_{\text{ad}}$. This completes the proof.
	
\end{proof}

Now, invoking the differentiability properties established in Section~5, we can derive a first-order necessary condition for local optimality.

\begin{Thm}[The optimality system to \eqref{Pepsla}] \label{VU2}
	Let $\bphi\in \left(\B{\mathcal{G}}^{\B{m}}\cap \B{U}_c\right)$ be a local minimizer of the optimization problem $\eqref{Pepsla}$, i.e., there exists $\delta>0$ such that 
	\begin{align*}
	   J_l^{\varepsilon}(\btheta) \ge J_{l}^{\varepsilon}(\bphi)\quad 
	   \text{for all $\btheta\in \B{\mathcal{G}}^{\B{m}}\cap \B{U}_c$ with } \norm{\btheta-\bphi}_{\HL}<\delta.  
	\end{align*}
	Suppose that the eigenvalues $\lambda^{\bphi}_{i_1},\dots,\lambda^{\bphi}_{i_l}$ are simple and let us fix $\Lzrp$-normalized eigenfunctions $\bw^{\bphi}_{i_1},\dots,\bw^{\bphi}_{i_l}\in \Hd$ to the eigenvalues $\lambda^{\bphi}_{i_1},\dots,\lambda^{\bphi}_{i_l}$, respectively. 
	
	Then the following optimality system is satisfied:
	\begin{itemize}
    		\item The state equations
    		  \begin{align}\tag{${SE}_j$}
    		      \begin{cases}
    		          \begin{array}{rll}
    		              -\nabla\cdot\left[\C(\bphi)\mathcal{E}(\bw_{i_j}^{\bphi})\right]
    		          &=
    		              \lambda^{\bphi}_{i_j}
    		              \rho(\bphi)\bw_{i_j}^{\bphi}
    		          &\quad
    		              \text{in }\Omega,\\
    		              \bw_{i_j}^{\bphi}
    		          &=
    		              \B{0}
    		          &\quad
    		              \text{on }\Gamma_D,\\
    		              \left[
    		                  \C(\bphi)\mathcal{E}(\bw_{i_j}^{\bphi})
    		              \right]\B{n}
    		          &=
    		              \B{0}
    		          &\quad
    		              \text{on }\Gamma_0,
    		      \end{array}
    		  \end{cases}
    		\end{align}
    		are satisfied for all $j\in\{1,\dots,l\}$. 
    		\item 
    		The variational inequality
        		\begin{align}
        		\tag{${VI}$}
            		\begin{aligned}
                		  0
                		&\le 
                    		\gamma\eps
                    		\int_{\Omega}\nabla\bphi:\nabla(\btheta-\bphi)\,\text{d}x
                    		+\frac{\gamma}{\eps}
                    		\int_{\Omega}\psi_0^{\prime}(\bphi)(\btheta-\bphi)\text{\,d}x \\[1ex]
                		&\qquad 
                    		+\sum_{j=1}^{l}
                    		\Bigg\{
                    		      \Psi'_{\lambda_{i_j}}
                    		          \big(\lambda^{\bphi}_{i_1},\dots,\lambda^{\bphi}_{i_l}\big)\;
                    		              \Bigg(
                    		                  \BEDp{\bw^{\bphi}_{i_j}}{\bw^{\bphi}_{i_j}} \\[-1ex]
                		&\qquad\qquad\qquad\qquad\qquad\qquad\qquad
                    		                  -\lambda^{\bphi}_{i_j}
                    	                       	\int_{\Omega}
                    		                          \rho^{\prime}(\bphi)\left(\btheta
                    		                          -\bphi\right)\abs{\bw^{\bphi}_{i_j}}^2
                    		                      \text{\,d}x
                    		                  \Bigg)
                    		  \Bigg\}
            		\end{aligned}
        		\end{align}
    		is satisfied for all $\btheta\in \left(\B{\mathcal{G}}^{\B{m}}\cap \B{U}_c\right)$ and all $j\in\{1,\dots,l\}$.
	\end{itemize}
\end{Thm}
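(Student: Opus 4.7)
The state equations $(SE_j)$ require no work: they are simply the strong form of \eqref{WEstate}, which each $\bw^\bphi_{i_j}$ satisfies by hypothesis as an eigenfunction to $\lambda^\bphi_{i_j}$. My plan for the variational inequality is the standard convex perturbation argument. First I would observe that $\B{\mathcal{G}}^{\B{m}}\cap\B{U}_c$ is convex, since the Gibbs simplex $\B{G}$ is convex, the mean-value constraint is affine, and the $\B{U}_c$-conditions are linear pointwise equalities. Hence for arbitrary $\btheta\in\B{\mathcal{G}}^{\B{m}}\cap\B{U}_c$ and $t\in[0,1]$, the path $\bphi_t:=\bphi+t(\btheta-\bphi)=(1-t)\bphi+t\btheta$ remains admissible with $\|\bphi_t-\bphi\|_{\HL}\to 0$ as $t\to 0^+$. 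Local optimality then forces $J^\eps_l(\bphi_t)\ge J^\eps_l(\bphi)$ for all sufficiently small $t>0$; dividing by $t$ and sending $t\to 0^+$ will yield (VI), provided every constituent of $J^\eps_l$ is semi-differentiable at $\bphi$ along $\bh:=\btheta-\bphi$.

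Next I would differentiate each term. For $\gamma E^\eps(\bphi_t)$, the indicator $I_{\B{G}}(\bphi_t)$ vanishes identically along the path (since $\bphi_t(x)\in\B{G}$ a.e.\ by convexity of $\B{G}$), so only the quadratic gradient term and $\psi_0\in C^{1,1}$ remain. A direct computation combined with Lebesgue's dominated convergence theorem (justified by the $H^1$-regularity of $\bphi,\btheta$ and the local Lipschitz continuity of $\psi_0'$) then produces the right-derivative $\gamma\eps\int_\Omega\nabla\bphi:\nabla(\btheta-\bphi)\,\text{d}x+(\gamma/\eps)\int_\Omega\psi_0'(\bphi)(\btheta-\bphi)\,\text{d}x$. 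For the spectral term $\Psi(\lambda^{\bphi_t}_{i_1},\dots,\lambda^{\bphi_t}_{i_l})$, the simplicity hypothesis together with Theorem~\ref{difewv} supplies, for each $j\in\{1,\dots,l\}$, a radius $\delta^\bphi_{i_j}>0$ such that the implicit-function branch $S^\bphi_{i_j,2}(\bphi_t)$ coincides with $\lambda^{\bphi_t}_{i_j}$ whenever $\bphi_t\in B_{\delta^\bphi_{i_j}}(\bphi)$, with this branch being Fr\'echet-differentiable at $\bphi$ and its derivative given by formula \eqref{lhdef}. Restricting $t<\min_j\delta^\bphi_{i_j}/\|\btheta-\bphi\|_{\HL}$ places the entire path in the common domain of differentiability, and the chain rule applied to $\Psi\in C^1$ then produces $\sum_{j=1}^l\Psi'_{\lambda_{i_j}}(\lambda^\bphi_{i_1},\dots,\lambda^\bphi_{i_l})\,(\lambda^\bphi_{i_j})'(\btheta-\bphi)$; substituting \eqref{lhdef} reproduces verbatim the spectral part of (VI).

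The crux, and therefore the main obstacle, is precisely the identification of the implicit-function branch $S^\bphi_{i_j,2}(\bphi_t)$ with the intrinsic eigenvalue $\lambda^{\bphi_t}_{i_j}$ along the admissible path: this is equation \eqref{rabs} in Theorem~\ref{difewv}, and it rests on Corollary~\ref{COR:SIGN}, which guarantees that simplicity of $\lambda^\bphi_{i_j}$ is preserved under sufficiently small $\HL$-perturbations and that the sign condition \eqref{SC} uniquely fixes the perturbed eigenfunction $\bw^{\bphi_t}_{i_j}$. Note that the ambiguity in the sign of $\bw^\bphi_{i_j}$ itself is harmless for (VI), since the formula \eqref{lhdef} depends on $\bw^\bphi_{i_j}$ only through the quadratic quantities $\BEDp{\bw^\bphi_{i_j}}{\bw^\bphi_{i_j}}$ and $|\bw^\bphi_{i_j}|^2$. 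Once the identification is secured for all $j=1,\dots,l$ simultaneously (which forces the smallness of $t$ mentioned above), assembling the three semi-derivatives and invoking $\limsup_{t\to 0^+}[J^\eps_l(\bphi_t)-J^\eps_l(\bphi)]/t\ge 0$ delivers (VI).
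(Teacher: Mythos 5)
Your proposal is correct and follows essentially the same route as the paper: convexity of $\B{\mathcal{G}}^{\B{m}}\cap\B{U}_c$ gives admissibility of the path $\bphi+t(\btheta-\bphi)$, Fr\'echet differentiability of $J_l^{\eps}$ (via Theorem~\ref{difewv}, the chain rule for $\Psi$, and the smooth part $\psi_0$ of the potential) yields a nonnegative directional derivative, and substituting \eqref{lhdef} produces (VI). The paper's proof is terser but identical in substance; your additional care about the identification \eqref{rabs} of the implicit-function branch with the true eigenvalue along the path is exactly the point the paper delegates to Theorem~\ref{difewv}.
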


\begin{proof}
	Since $\B{\mathcal{G}}^{\B{m}}\cap \B{U}_c$ is convex, it holds that $\bphi+t(\btheta-\bphi) \in \B{\mathcal{G}}^{\B{m}}\cap \B{U}_c$ for all $\btheta\in\B{\mathcal{G}}^{\B{m}}\cap \B{U}_c$ and all $t\in[0,1]$. As the objective functional $J_l^{\varepsilon}$ is Fr\'echet differentiable by chain rule, we know that
	\begin{align*}
		0\le \frac{\mathrm d}{\mathrm d t} J\big(\bphi+t(\btheta-\bphi) \big)\big\vert_{t=0} = J'\big(\bphi\big)(\btheta-\bphi).
	\end{align*}
	Using \eqref{lhdef} in Theorem~\ref{difewv} it is now straightforward to check that $J'\big(\bphi\big)(\btheta-\bphi)$ is identical with the right-hand side of the variational inequality. This completes the proof. 
\end{proof}
\begin{Rem}
	\normalfont
	If the first eigenvalue $\lambda_1^\bphi$ is not simple but only $\lambda_1^\bphi$ and further simple eigenvalues appear in the objective functional, we can still derive a variational inequality by means of the semi-differentiability established in Theorem~\ref{GDO}. This is because in the above proof only variations $\bphi+t(\btheta-\bphi)$ with positive $t$ are considered.
	
	To be precise, let us assume that the multiplicity of the eigenvalue $\lambda_1^\bphi$ is $M\in\N$. This means that
    	\begin{align*}
    		\lambda_1^\bphi = \lambda_2^\bphi = ... = \lambda_M^\bphi.
    	\end{align*}
    	If now $\lambda_1^{\bphi}$ appears in \eqref{Pepsla} but none of the eigenvalues $\lambda_2^\bphi = ... = \lambda_M^\bphi$ does, the term
    	\begin{align*}
        	   \BEDp{\bw^{\bphi}_{1}}{\bw^{\bphi}_{1}}
        	-\lambda^{\bphi}_{1}
        	   \int_{\Omega}
        	   \rho^{\prime}(\bphi)\left(\tilde{\bphi}
        	-\bphi\right)\big| \bw^{\bphi}_{1} \big|^2,
    	\end{align*}
    	in the variational inequality has to be replaced by 
    	\begin{align*}
        	\inf\left\{
        		\BEDp{\bu}{\bu}
        		-\lambda^{\bphi}_{1}
        		\left(\bu,\bu\right)_{
        			\rho^{\prime}(\bphi)\left(
        			\tilde{\bphi}
        			-\bphi
        		\right)
        		}
    	       \left|\;
                	\begin{aligned}
                		&\bu\in \Hd \text{ is an} \\
                		&\text{eigenfunction to }\lambda_{1}^\bphi \\ 
                		&\text{with } \norm{\bu}_{\Lzrp}=1 
                	\end{aligned} 
    	       \right.\right\}.
    	\end{align*}
    	Of course, if $\ev$ is simple (i.e., $M=1$) both terms coincide.
	
	In the following we will only discuss the case of simple eigenvalues, but keep the fact in mind that it is not necessary to require simplicity of the first eigenvalue.
\end{Rem}


\section{Combination of compliance and eigenvalue optimization} \label{SEC:OPT:2}

We now want to analyze the optimization problem \eqref{Kepsl} (that was introduced in Subsection~\ref{COPT}) by establishing results similar to those in Section~\ref{SEC:OPT:1}
To this end, we will use the control-to-state operator
\begin{align*}
    S:\HL\to \HC, \quad \bphi\mapsto \bu(\bphi)
\end{align*}
that was introduced in \cite{Blank} and maps any $\bphi\in \HL$ onto its corresponding solution $\bu=\bu(\bphi)$ of the state equation \eqref{wState}.
This allows us to consider the reduced optimization problem
\begin{align}\tag{$\mathcal{K}^{\eps*}_{l}$}\label{Kepsl*}
    \left\{
        \begin{aligned}
                &\min &&I_l^{\eps}(\bphi)
            =
                \alpha F\big(S(\bphi),\bphi\big)
                +\beta J_0\big(S(\bphi),\bphi\big)
                +\gamma E^{\eps}(\bphi)
                +\Psi(\lambda^{\bphi}_{i_1},\dots,\lambda^{\bphi}_{i_l})\\
           &\text{ s.t.}&& 
                \bphi\in \mathcal{\B{\mathcal{G}}}^{\B{m}}\cap \B{U}_c,
                \eqref{wState} \text{ is fulfilled},\\
          &&&\text{and } 
                \lambda^{\bphi}_{i_1},\dots,\lambda^{\bphi}_{i_l} 
                \text{ are}\text{ eigenvalues of } \eqref{WEstate}
        \end{aligned}
    \right.
\end{align}
(with $\alpha,\beta\ge 0$, $\gamma,\eps>0$ and $\B{m}\in (0,1)^N\cap\Sigma^N$), which is obviously equivalent to the original problem \eqref{Kepsl}.

The following theorem ensures the existence of a minimizer to \eqref{Kepsl*} or \eqref{Kepsl}, respectively.

\begin{Thm}[Existence of a minimizer to \eqref{Kepsl*}]
	Suppose that the set $\mathcal{\B{\mathcal{G}}}^{\B{m}}\cap \B{U}_c$ is non-empty. Then the problem $\eqref{Kepsl*}$ has a minimizer $\overline\bphi\in \mathcal{\B{\mathcal{G}}}^{\B{m}}\cap \B{U}_c$.
\end{Thm}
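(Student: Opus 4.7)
The plan is to apply the direct method in the calculus of variations, in close analogy with the proof of Theorem~\ref{Exm}, while additionally exploiting the continuity properties of the compliance-related control-to-state operator $S$ that were established in \cite{Blank}.

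First I would verify that $I_l^{\eps}$ is bounded from below on the admissible set $\mathcal{F}_{\mathrm{ad}}:=\B{\mathcal{G}}^{\B m}\cap \B{U}_c$. Indeed, $\alpha F$ and $\beta J_0$ are nonnegative (here $\alpha,\beta\ge 0$, and $J_0$ is a nonnegative power of an integral, while $F$ is the mean compliance, which is nonnegative by the standard energy identity for \eqref{wState}). The Ginzburg--Landau term $\gamma E^{\eps}$ is nonnegative, and by assumption $\Psi\ge -c_{\Psi}$. Since $\mathcal{F}_{\mathrm{ad}}$ is assumed to be non-empty, we may pick a minimizing sequence $(\bphi_k)_{k\in\N}\subset\mathcal{F}_{\mathrm{ad}}$ with $I_l^{\eps}(\bphi_k)\to \bar I:=\inf_{\mathcal{F}_{\mathrm{ad}}}I_l^\eps\in\R$.

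Because $(\bphi_k)_{k\in\N}\subset \B{\mathcal{G}}^{\B m}$, the sequence is bounded in $\HN$ (via the Ginzburg--Landau bound and the pointwise Gibbs-simplex constraint, which also provides a uniform $\LuN$-bound). Hence, after extracting a non-relabeled subsequence,
\begin{align*}
    \bphi_k \rightharpoonup \overline\bphi \quad\text{in } \HN,\qquad \bphi_k\to\overline\bphi\quad\text{in }L^2(\Omega;\R^N) \text{ and a.e. in }\Omega,
\end{align*}
as $k\to\infty$. The pointwise convergence together with the closedness of the Gibbs simplex $\B G$ and of the constraint set $\B{U}_c$ (both are defined by pointwise conditions) yield $\overline\bphi\in\mathcal{F}_{\mathrm{ad}}$; the mean-value constraint passes to the limit by the strong $L^1$-convergence.

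It remains to establish lower semicontinuity of each contribution. The energy $E^{\eps}$ is weakly lower semicontinuous on $\HN$ by standard arguments (convexity of $\abs{\nabla\,\cdot\,}^2$ and Fatou's lemma applied to $\psi_0$, using that the obstacle $I_{\B G}$ is preserved by the pointwise a.e. limit). For the eigenvalue term, Theorem~\ref{slw} gives $\lambda_{i_j}^{\bphi_k}\to\lambda_{i_j}^{\overline\bphi}$ for every $j=1,\dots,l$ along the selected subsequence; the continuity of $\Psi$ then implies
\begin{align*}
    \Psi\big(\lambda_{i_1}^{\bphi_k},\dots,\lambda_{i_l}^{\bphi_k}\big) \to \Psi\big(\lambda_{i_1}^{\overline\bphi},\dots,\lambda_{i_l}^{\overline\bphi}\big).
\end{align*}
For the compliance and tracking terms, I would invoke the continuity properties of the control-to-state operator $S:\HL\to \HC$ from \cite{Blank}: up to a subsequence, $S(\bphi_k)\rightharpoonup S(\overline\bphi)$ in $\HC$ (and strongly in $L^2(\Omega;\R^d)$), which, combined with the strong convergence $\bphi_k\to\overline\bphi$ in $L^2$ and the explicit form of $F$ and $J_0$, yields (lower semi-)continuity of $F(S(\,\cdot\,),\,\cdot\,)$ and $J_0(S(\,\cdot\,),\,\cdot\,)$ along our sequence, exactly as in \cite[Sect.~3]{Blank}.

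Combining the above, we obtain
\begin{align*}
    I_l^{\eps}(\overline\bphi) \le \liminf_{k\to\infty} I_l^{\eps}(\bphi_k) = \bar I,
\end{align*}
and since $\overline\bphi\in\mathcal{F}_{\mathrm{ad}}$ the reverse inequality is automatic, proving that $\overline\bphi$ is a minimizer. The main technical point to watch is the passage to the limit in the compliance part $\alpha F(S(\bphi_k),\bphi_k)$, since it couples the displacement $\bu_k=S(\bphi_k)$ to the factor $(1-\varphi_k^N)\B f$; this is precisely where the results from \cite{Blank} on weak continuity of $S$ together with strong $L^2$-convergence of $\bphi_k$ are needed, whereas the eigenvalue and Ginzburg--Landau contributions are handled verbatim as in Theorem~\ref{Exm}.
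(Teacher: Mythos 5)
Your proposal is correct and follows essentially the same route as the paper, which simply combines the direct method from the proof of Theorem~\ref{Exm} with the continuity properties of the control-to-state operator $S$ from \cite{Blank}; you have merely written out the details that the paper omits. The one point to keep explicit when writing it up is that the nonnegativity of the compliance term $F(S(\bphi),\bphi)$ comes from testing \eqref{wState} with $\bu$ itself, and that the identification of the weak limit of $S(\bphi_k)$ with $S(\overline\bphi)$ is exactly the continuity statement proved in \cite{Blank}.
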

\begin{proof}
	The assertion can be verified by simply combining the proof in \cite[Thm~4.1]{Blank} and the proof of Theorem~\ref{Exm}. Therefore we omit the details.
\end{proof}
From the differentiability properties deduced in \cite{Blank} and in this paper we obtain a variational inequality for the problem $\eqref{Kepsl}$. Note that for $\nu\in (0,1)$, the functional $J_0$ is in general not differentiable where the integral raised to the power $\nu$ is equal to zero. Hence, as in \cite{Blank}, we only consider $(\bu,\bphi)$ such that
\begin{align*}
    \int_{\Omega}c\big(1-\varphi^N\big)\abs{\bu-\bu_{\Omega}}^2\text{\,d}x\neq 0,
\end{align*}
if $\beta\neq 0$. 

Eventually, we can state the optimality system for the combined problem where the first-order necessary condition for local optimality is incorporated.
\begin{Thm}[The optimality system to \eqref{Kepsl*}]
	Let $\bphi\in \left(\B{\mathcal{G}}^{\B{m}}\cap \B{U}_c\right)$ be a local minimizer of the optimization problem $\eqref{Kepsl*}$, i.e., there exists $\delta>0$ such that 
	\begin{align*}
    	   I_l^{\varepsilon}(\btheta) \ge I_l^{\varepsilon}(\bphi)\quad 
    	\text{for all $\btheta\in \B{\mathcal{G}}^{\B{m}}\cap \B{U}_c$ with }      \norm{\btheta-\bphi}_{\HL}<\delta.  
	\end{align*}
	Suppose that the eigenvalues $\lambda^{\bphi}_{i_1},\dots,\lambda^{\bphi}_{i_l}$ are simple and let us fix $\Lzrp$-normalized eigenfunctions $\bw^{\bphi}_{i_1},\dots,\bw^{\bphi}_{i_l}\in \Hd$ to the eigenvalues $\lambda^{\bphi}_{i_1},\dots,\lambda^{\bphi}_{i_l}$, respectively.  
	
	Then, there exist a state $\bu\in \HC$ and an adjoint state $\B{p}\in \HC$ such that the tuple
	\begin{align*}
    	&\Big(
        	\bu,
        	\bphi,
        	\B{p},
        	\big(\DB{w}{\varphi}_{i_j}\big)_{j=1}^{l},
        	\big(\lambda_{i_j}\big)_{j=1}^{l}
        \Big)\\
    	&\quad \in 
        	\HC
        	\times\big(\B{\mathcal{G}^m\cap \B{U}_c}\big)
        	\times \HC
        	\times\big(\Hd\big)^l 
        	\times \R^l
	\end{align*}
	fulfills the following optimality system:
	\begin{itemize}
	\item The state equations
	\begin{align}\tag{$SE^*$}\label{SE}
    	\begin{cases}
        	\begin{array}{rll}
            	   -\nabla\cdot\left[\C(\bphi)\mathcal{E}(\bu)\right]
            	&=
            	   \big(1-\varphi^N\big)\B{f}
            	& \quad\text{in }\Omega,\\
            	   \bu
            	&=
            	   \B{0}
            	&\quad\text{on }\Gamma_C,\\
            	   \left[\C(\bphi)\mathcal{E}(\bu)\right]\B{n}
            	&=
            	   \B{g}
            	&\quad\text{on }\Gamma_g,
        	\end{array}
    	\end{cases}
	\end{align}
	and
	\begin{align}\tag{${SE}_j^*$}
    	\begin{cases}
        	\begin{array}{rll}
        	   -\nabla\cdot\left[\C(\bphi)\mathcal{E}(\bw_{i_j}^{\bphi})\right]
        	&=
        	   \lambda^{\bphi}_{i_j}\rho(\bphi)\bw_{i_j}^{\bphi}
        	& \quad\text{in }\Omega,\\
        	   \bw_{i_j}^{\bphi}
        	&=
        	   \B{0}
        	&\quad\text{on }\Gamma_D,\\
        	   \left[\C(\bphi)\mathcal{E}(\bw_{i_j}^{\bphi})\right]\B{n}
        	&=\B{0}
        	&\quad\text{on }\Gamma_0,
        	\end{array}
    	\end{cases}
	\end{align}
	for $j=1,\dots,l$, are satisfied in the weak sense.
	\item The adjoint equation
	\begin{align}\tag{$AE^*$}\label{AE}
    	\begin{cases}
        	\begin{aligned}
            	   -\nabla\cdot\left[\C(\bphi)\mathcal{E}(\B{p})\right]
            	&=
            	   \alpha\big(1-\varphi^N\big)\B{f}\\
            	&\phantom{=}
            	   +2\beta
            	   \nu J_0(\bu,\bphi)^{\frac{\nu-1}{\nu}}
            	   c\big(1-\varphi^N\big)\left(\bu-\bu_{\Omega}\right)
            	&&\text{in }\Omega,\\
            	   \B{p}
            	&=
            	   \B{0}
            	&&\text{on }\Gamma_C,\\
            	   \left[\C(\bphi)\mathcal{E}(\B{p})\right]\B{n}
            	&=
            	   \alpha\B{g}
            	&&\text{on }\Gamma_g,
        	\end{aligned}
    	\end{cases}
	\end{align} 
	is satisfied in the weak sense.
	\item The variational inequality
	\begin{align}\tag{$VI^*$}
    	\begin{aligned}
           	0
            	\,\le\, 
        	&\gamma\eps 
            	\int_{\Omega}
            	   \nabla\bphi:\nabla (\btheta-\bphi)
            	\textup{\,d}x
            +\frac{\gamma}{\eps}
	            \int_{\Omega}
	            \psi_0^{\prime}(\bphi)\cdot(\btheta-\bphi)
	            \textup{\,d}x\\[0.5ex]
        	&-\beta\nu 	J_0(\bu,\bphi)^{\frac{\nu-1}{\nu}}
            	   \int_{\Omega}
            	       c\big(\vartheta^N-{\varphi}^N\big)\left|\bu-\bu_{\Omega}\right|^2
            	   \textup{\,d}x\\[0.5ex]
        	&-\int_{\Omega}
            	   \big(\vartheta^N-{\varphi}^N\big)\B{f}\cdot(\alpha\bu+\B{p})
            	\textup{\,d}x
            	\;-\langle 
            	   \mathcal{E}(\B{p}),\mathcal{E}(\bu)
            	\rangle_{\C^{\prime}(\bphi)(\btheta-\bphi)}\\[0.5ex]
        	&+\sum_{j=1}^{l}
            	   \left\{
            	       \Psi_{\prime \lambda_{i_j}}
            	       \big(\lambda^{\bphi}_{i_1},\dots,\lambda^{\bphi}_{i_l}\big)
            	           \left(
            	               \BEDp{\bw^{\bphi}_{i_j}}{\bw^{\bphi}_{i_j}} \right.\right.\\[0.5ex]
        	&\qquad\qquad\qquad\qquad\qquad\qquad
            	\left.\left. -\lambda^{\bphi}_{i_j}
            	               \int_{\Omega}
            	                   \rho^{\prime}(\bphi)\left(\btheta-\bphi\right)\big|\bw^{\bphi}_{i_j}\big|^2
            	               \textup{\,d}x
            	           \right)
            	   \right\},\\[1ex]
    	\end{aligned}
	\end{align}
	is satisfied for all $\btheta\in \B{\mathcal{G}}^{\B{m}}\cap \B{U}_c$.
	\end{itemize}
\end{Thm}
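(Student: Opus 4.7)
The plan is to derive the optimality system by combining the differentiability results for the compliance functionals from \cite{Blank} with the Fr\'echet differentiability of simple eigenvalues established in Theorem~\ref{difewv}. Since the state equations $(SE^*)$ and $(SE_j^*)$ merely reiterate the definitions of $\bu = S(\bphi)$ as the weak solution of \eqref{wState} and of $\bw_{i_j}^{\bphi}$ as the $\Lzrp$-normalized eigenfunctions associated with the simple eigenvalues $\lambda_{i_j}^{\bphi}$, they hold by construction. The core of the proof is therefore to establish the variational inequality $(VI^*)$ and to identify the adjoint state $\B p$ solving $(AE^*)$.

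First I would exploit convexity of $\B{\mathcal G}^{\B m}\cap \B U_c$: for any $\btheta$ in this set and $t\in [0,1]$, the perturbation $\bphi + t(\btheta - \bphi)$ is again admissible and, for $t$ sufficiently small, lies in the $\HL$-ball of radius $\delta$ around $\bphi$. Local optimality then yields
\begin{align*}
    0 \le \frac{\mathrm d}{\mathrm d t} I_l^{\eps}\big(\bphi + t(\btheta - \bphi)\big)\bigg\vert_{t=0^+}.
\end{align*}
By the chain rule, this one-sided derivative decomposes into four contributions: the derivatives of $\alpha F(S(\bphi),\bphi)$ and $\beta J_0(S(\bphi),\bphi)$, which are handled as in \cite{Blank} using Fr\'echet differentiability of the control-to-state operator $S$; the derivative of $\gamma E^{\eps}(\bphi)$, which produces the standard phase-field terms; and the derivative of $\Psi(\lambda_{i_1}^{\bphi},\dots,\lambda_{i_l}^{\bphi})$, which by simplicity of the eigenvalues and Theorem~\ref{difewv} equals
\begin{align*}
    \sum_{j=1}^{l}\Psi'_{\lambda_{i_j}}\big(\lambda_{i_1}^{\bphi},\dots,\lambda_{i_l}^{\bphi}\big)\big(\lambda_{i_j}^{\bphi}\big)'(\btheta-\bphi),
\end{align*}
with each $(\lambda_{i_j}^{\bphi})'(\btheta - \bphi)$ given explicitly by \eqref{lhdef}.

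Next, I would introduce the adjoint state $\B p\in \HC$ as the unique weak solution of \eqref{AE}, whose existence and uniqueness follow from Lax--Milgram applied to the bilinear form $\BE{\cdot}{\cdot}$ on $\HC$ (this is well-defined since $\mathcal{H}^{d-1}(\Gamma_C) > 0$). The role of $\B p$ is to eliminate $S'(\bphi)(\btheta - \bphi)\in \HC$ from the sum of the compliance and deviation derivatives: testing \eqref{AE} with $S'(\bphi)(\btheta - \bphi)$ and testing the linearized state equation (which defines $S'(\bphi)(\btheta - \bphi)$) with $\B p$, and subtracting, yields an identity that replaces all occurrences of $S'(\bphi)(\btheta - \bphi)$ by the term $-\langle\mathcal E(\B p),\mathcal E(\bu)\rangle_{\C'(\bphi)(\btheta - \bphi)}$ appearing in $(VI^*)$. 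This is exactly the reduction carried out in \cite{Blank}, so I would cite that reference rather than redo the computation.

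The main obstacle I anticipate is bookkeeping: one must carefully combine the linearization of $F$, the linearization of $J_0$ (which carries the factor $\nu J_0(\bu,\bphi)^{(\nu-1)/\nu}$ and is only meaningful under the nondegeneracy assumption $\int_{\Omega} c(1-\varphi^N)|\bu - \bu_\Omega|^2 \neq 0$ when $\beta\neq 0$), and the explicit dependence on $\bphi$ through $\C(\bphi)$ and the factor $(1-\varphi^N)$, and then verify that the adjoint substitution produces precisely the first four lines of $(VI^*)$. The eigenvalue contribution then supplies the last line via \eqref{lhdef}. Collecting all terms and using that the inequality must hold for every $\btheta\in \B{\mathcal G}^{\B m}\cap \B U_c$ yields $(VI^*)$ and completes the proof.
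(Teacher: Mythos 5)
Your proposal is correct and follows essentially the same route as the paper, which simply notes that the result is obtained by combining the properties of the control-to-state operator $S$ from \cite{Blank} with the argument of Theorem~\ref{VU2}; your write-up spells out the convexity/one-sided-derivative argument, the adjoint substitution, and the eigenvalue contribution via \eqref{lhdef} in more detail than the paper does, but the underlying strategy is identical.
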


\begin{proof}
	Using the properties of the control-to-state operator $S$, the assertion can be proved proceeding similarly as in the proof of Theorem~\ref{VU2}.
\end{proof}

\section*{Acknowledgment}
The authors were supported by the RTG 2339 ``Interfaces, Complex Structures, and Singular Limits''
of the German Science Foundation (DFG). 
The support is gratefully acknowledged.


\footnotesize\setlength{\parskip}{0cm}
\bibliography{OptEig}

\end{document}